\documentclass[12pt]{amsart}\usepackage{amssymb, graphicx, epsfig,verbatim}

\title[Lipeomorphic p-adic equivalence] {Lipeomorphic equivalence for p-adic analytic functions: a comparison between complex and p-adic dynamics}
\subjclass[2000]{Primary 30D05, 32P05}
\keywords{conjugacy, non-archimedean, p-adic dynamics, continuous, lipeomorphism, analytic, holomorphic}
\author{ Adrian Jenkins \and Steven Spallone }
\address{Department of Mathematics, Kansas State University, Manhattan,
KS, 66506} \email{majenkin@math.purdue.edu}
\address{Department of Mathematics, University of Oklahoma, Norman,
OK, 73019} \email{sspallone@math.ou.edu}

\numberwithin{equation}{section}

\newtheorem{thm}{Theorem}[section]

\newtheorem{cor}[thm]{Corollary}
\newtheorem{prop}[thm]{Proposition}
\newtheorem{define}[thm]{Definition}

\newtheorem{lemma}[thm]{Lemma}

\newcommand{\nc}{\newcommand}
\nc{\dmo}{\DeclareMathOperator}
\nc{\ul}{\underline}
\nc{\F}{\mathbb{F}}
\nc{\N}{\mathbb{N}}
\nc{\Zp}{\mathbb{Z}_p}
\nc{\Qp}{\mathbb{Q}_p}
\nc{\Cp}{\mathbb{C}_p}
\nc{\half}{\frac{1}{2}}
\nc{\W}{\ul{\N}}
\nc{\Q}{\mathbb{Q}}
\nc{\A}{\mathcal{A}}
\nc{\Z}{\mathbb{Z}}
\nc{\R}{\mathbf{R}}
\nc{\C}{\mathbf C}
\nc{\Berk}{\mathcal{B}}
\nc{\orb}{\mathbb{O}}
\nc{\isom}{\stackrel{\sim}{\to}}
\nc{\OK}{\mathcal{O}_0^K}
\nc{\eps}{\varepsilon}
\nc{\ra}{\rightarrow}
\nc{\gm}{\gamma}
\nc{\ol}{\overline}
\nc{\pp}{\mathfrak p}
\dmo{\ord}{ord}
\dmo{\Mod}{mod}
\dmo{\Span}{Span}

\textheight=630pt \textwidth=462pt \oddsidemargin=4pt
\evensidemargin=4pt \topmargin=6pt
\begin{document}
\begin{abstract}
Let $K$ be a $p$-adic field, and suppose that $f$ and $g$ are germs of analytic functions on $K$ which are tangent to the identity at $0$.  It is known that $f$ and $g$ are homeomorphically equivalent, meaning there is an invertible germ $h$ so that $h^{-1} \circ f \circ h=g$.  In this paper, we determine whether there exists such $h$ which are lipeomorphisms, and moreover find the best possible H\"{o}lder estimate at $0$.  Our results have striking complex and real counterparts.
\end{abstract}

\maketitle
\section{Introduction}\setcounter{equation}{0}\label{section:s1}
Let $K$ be a $p$-adic field.  Consider an power series $f(x)=ax + \cdots$ with coefficients in $K$ and a nonzero disk $U$ of convergence at $0$, with $a \neq 0$.  These are $K$-analytic functions with a fixed point at $0$, considered as invertible germs at $0$.  A basic problem is to determine when two such germs are conjugate by a germ $h$ under the operation of composition.  There are several such problems, depending on what conditions we impose on $h$.  We may merely ask for $h$ to be homeomorphism, a rather weak condition, or insist on it being analytic, the strongest such condition.

 Homeomorphic equivalence depends on the dynamic nature of $a$, which is called the multiplier of $f$.  We consider four types of multipliers.  We say that $a$ (or $f$) is contracting if $|a| <1$, expanding if $|a|>1$, or indifferent if $|a|=1$.  In the last case, if $a$ is a root of unity, then $f$ is called indifferent rational; otherwise it is called indifferent irrational.

The set of contracting (resp. expanding) maps is a homeomorphic equivalence class.
The set of indifferent maps $f$ with no power of $f$ equal to the identity  is a homeomorphic equivalence class.  (We will encounter the proofs of most of these facts in this paper.)

As for analytic equivalence, the easiest case is when the multiplier of $f$ is not indifferent rational; then $f$ is analytically equivalent to $L_a(x)=ax$  (\cite{Herman-Yoccoz}).  The indifferent rational case reduces to the case where the multiplier is $1$.  Then, any such map is analytically equivalent to one of the form $f(x)=x+bx^m+cx^{2m+1}$.  (See \cite{Rivera-Letelier}, or \cite{Jenkins-Spallone} for another approach.)

The main object of this work is to study an intermediate equivalence relation between these two extremes, that of lipeomorphic equivalence.

\begin{define} Let $h$ be a homeomorphism.  Then $h$ is called a lipeomorphism if both $h$ and its inverse $h^{-1}$ are lipschitz continuous.  Two germs $f$ and $g$ are called lipeomorphically equivalent if they may be intertwined by a lipeomorphic germ.
\end{define}

In addition to the construction of lipeomorphic intertwining maps, a main goal of this paper is to show how imposing Lipschitz or H\"{o}lder continuity conditions on the intertwining maps $h$ leads to interesting new theory.  As we will see, these conditions illuminate the dynamical local nature of the functions.

Our primary focus is on maps whose multiplier is $1$.
Writing now $f(x)=x+ a_2x+ \cdots$, let $m$ be the smallest integer so that $a_{m+1} \neq 0$; we call $m$ the order of $f$.  This is an analytic invariant. Our first result is that if the order of $f$ is equal to the order of $g$, and the coefficients $a_{m+1}$ agree, then $f$ and $g$ are lipeomorphically equivalent.

\begin{thm}\label{lipeomorphic conjugacy}
Let $K$ be a $p$-adic field, and let $f(x)=x+ax^{m+1} + \cdots$ and $g(x)=x+ax^{m+1} + \cdots$ be analytic functions, with $a \neq 0$.  Then, $f$ is lipeomorphically equivalent to $g$.
\end{thm}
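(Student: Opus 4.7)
The plan is first to reduce to polynomial normal forms and then construct the lipeomorphism directly. By the analytic classification recalled in the introduction, both $f$ and $g$ are analytically equivalent to polynomials of the form $x + a x^{m+1} + c\,x^{2m+1}$ (with possibly different third coefficients). Any analytic germ $h$ with $h'(0)\ne 0$ satisfies $|h(x) - h(y)| = |h'(0)|\,|x - y|$ on a small enough ball, since by the ultrametric the linear term of the Taylor expansion dominates all higher-order ones; in particular, analytic equivalences are automatic lipeomorphisms near $0$. Hence it suffices to prove the theorem when $f(x) = x + a x^{m+1} + c_1 x^{2m+1}$ and $g(x) = x + a x^{m+1} + c_2 x^{2m+1}$.

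The key dynamical input is that on a sufficiently small ball $B$ the ultrametric gives $|f(x)| = |g(x)| = |x|$, so $B$ decomposes as a disjoint union of $f$- and $g$-invariant spheres $S_r = \{|x| = r\}$, on each of which both maps have identical step size $|a|\,r^{m+1}$. Since $|f(y) - g(y)| = |c_1 - c_2|\,|y|^{2m+1}$ while $f,g$ (and their inverses) act as isometries on $B$, a direct induction yields the shadowing estimate
\[
|f^n(x) - g^n(x)| \leq |c_1 - c_2|\,|x|^{2m+1} \quad \text{for every } n \in \Z, \ x \in B.
\]
That is, corresponding $f$- and $g$-orbits remain pointwise within $O(|x|^{2m+1})$ of each other, uniformly in the iterate, so any reasonable conjugacy will differ from the identity by $O(|x|^{2m+1})$.

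With the shadowing in hand, $h$ is built sphere by sphere. On each $S_r$, choose a small disk $\Omega_r$ of radius less than $|a|\,r^{m+1}$ (so that $\Omega_r$ and $f^j(\Omega_r)$ are disjoint for small nonzero $j$), define $h$ on $\Omega_r$ by a lipeomorphism onto a corresponding disk for $g$---for instance the identity perturbed by the $O(r^{2m+1})$ correction matching $x$ to the $g$-orbit it shadows---and then extend by the conjugacy relation $h\circ f^n = g^n \circ h$ to the $f$-saturation of $\Omega_r$. The shadowing estimate guarantees that the extension is consistent and keeps $h$ within $O(r^{2m+1})$ of the identity.

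The main obstacle is assembling these sphere-by-sphere pieces into a globally Lipschitz $h$ on $B$. The estimate $|h(x) - x| = O(|x|^{2m+1}) \ll |x|$ disposes of pairs on different spheres automatically, since the ultrametric then forces $|h(x) - h(y)| = |x - y|$ whenever $|x| \ne |y|$; the delicate case is uniform control of the Lipschitz constant of $h|_{S_r}$ as $r$ varies. A $p$-adic subtlety compounding this is that orbits on $S_r$ return arbitrarily close to themselves along subsequences $n = p^k$ (since $|p^k| \to 0$), so $\Omega_r$ cannot be a true fundamental domain, and the extension scheme must respect this accumulation---most naturally via a canonical, Fatou-coordinate-style straightening of $f$ into a near-translation, ensuring that the sphere-level Lipschitz bounds glue into the desired uniform estimate.
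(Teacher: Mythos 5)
Your plan correctly identifies the two main reductions the paper also uses: first, the analytic classification lets you replace $f$ and $g$ by polynomial normal forms $x+ax^{m+1}+c_i x^{2m+1}$ (and analytic conjugacies with nonzero derivative are automatically scaled isometries near $0$, hence lipeomorphisms); second, the fact that $f,g$ act as isometries on small spheres, have the same step size on each sphere, and that $|f(y)-g(y)|=O(|y|^{2m+1})$, yields by an ultrametric induction the shadowing bound $|f^n(x)-g^n(x)|\le |c_1-c_2|\,|x|^{2m+1}$ for all $n\in\Z$. This is a genuinely clean observation, and it does show that the prospective conjugacy will be $O(|x|^{2m+1})$-close to the identity. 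But it is not enough: being uniformly close to the identity on each sphere is much weaker than being Lipschitz, and the Lipschitz control is precisely what makes the theorem nontrivial.

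The concrete gap is the Lipschitz estimate for $h$ restricted to a single $\Zp$-orbit and across neighboring orbits. What you need is a bound of the form $|h(f^n(x))-h(f^m(x))|\le \delta\,|f^n(x)-f^m(x)|$, which after normalization becomes a statement about the differences $\hat h_n - \hat h_m$ (where $\hat h_n$ is the correction at the $n$th iterate); the shadowing bound only controls each $|\hat h_n|$ separately, not the differences, and the differences do \emph{not} follow: one must show that when $m\equiv n\pmod{p^\ell}$ one gets the extra divisibility $p^{\ell+k}\mid (\hat h_n-\hat h_m)$. This is the content of Proposition~\ref{Ouija}, and it requires an actual arithmetic argument (a double sum rearrangement via the division algorithm, exploiting the near-translation structure at infinity after the change of coordinate $\A_{m+1}$). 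You notice exactly this difficulty and say the extension ``must respect this accumulation---most naturally via a canonical, Fatou-coordinate-style straightening,'' but you do not carry it out. That straightening \emph{is} the proof: the paper conjugates by $\A_{m+1}$, works in the coset structure $\eta_0+\Zp a$ at infinity, proves the divisibility lemma, and pastes the resulting Lipschitz maps across the finitely many sectors $X_\zeta$. Also note a secondary issue your ``extend by the conjugacy relation to the $f$-saturation of $\Omega_r$'' glosses over: since $f^{p^k}(\Omega_r)$ returns into $\Omega_r$, one needs a consistency check there, which is another manifestation of the same divisibility estimate. So the proposal identifies the right obstacle and the right remedy but leaves the essential technical step, which is the real content of the paper's Section~\ref{section:s4}, unproved.
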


In fact, the conjugating map $h$ that we construct is an isometry.

In proving this theorem, we will employ the time-$t$ maps of certain vector fields $V_{m+1}=x^{m+1}\frac{\partial }{\partial x}$. These flows are simple to iterate, and thus serve as a useful representative for the lipeomorphic equivalence class. They are obtained by solving the differential equation
\begin{equation}\label{vector fields}
\frac{dx}{dt}=x^{m+1}.
\end{equation}
For example, the time-$t$ map of $V_{2}= x^{2}$ is $f_{2,t}(x)=x/(1-tx)$. We have the power-series expansion $f_{2,t}(x)=x+tx^{2}\ldots $. In fact, it is easily shown that for any $m$, we have the power series expansion $f_{m+1,t}=x+tx^{m+1}+\cdots $.

%Hence, when Theorem \ref{lipeomorphic conjugacy} can be proven for these maps, the result follows.

Our proof follows much of the ideas of Shcherbakov \cite{Shch} in the complex case.  Briefly, the idea is to transport the problem at $0$ to a problem at infinity by conjugating by an $m$th power map.  Let $a \in K$ and write $t_a$ for the translation function $ \eta \mapsto \eta+a$. After this transportation, we are left showing that a function $\tilde{f}$ which is suitably close to $t_a$ is lipeomorphically equivalent to $t_a$.  Write $G$ for the difference $G=\tilde{f}-t_a$, a rapidly decaying function.  The essential calculation of our proof is some light arithmetic, which yields $p$-adic estimates for sums of iterates of $G$.

In the second part of the paper, we constrain ourselves to $\Qp$, but expand our vision to comparing all the types of functions mentioned above to each other.  For instance, if the orders of $f$ and $g$ differ, we find that no intertwining map $h$ can be a lipeomorphism. One can be found, however, which satisfies a H\"{o}lder-type condition at $0$.  In fact, we find the best possible H\"{o}lder exponents at $0$ in all cases.  We prove the following theorem:

\begin{thm}\label{local Holder continuity}  Let $f(x)=x+ax^{m+1}+ \cdots$, $g(x)=x+a'x^{m'+1}+ \cdots$ be $\Qp$-analytic functions with $a,a' \neq 0$. Then $f$ and $g$ are conjugate via a homeomorphism $h$ defined between neighborhoods $U_{1}$ and $U_{2}$ of $0$. Suppose that $m \leq m'$.  Then we may choose $h$ and a constant $C>0$ with the property that
\begin{equation} \label{parade}
|h(x)| \leq C|x|^{\frac{m}{m'}}.
\end{equation}
This exponent cannot be improved.
Secondly, if $x,y \in U_1$ are in the same $\Zp$-orbit, then
\begin{equation} \label{furrst}
|h(y)-h(x)| \leq C |y-x|^{\frac{m}{m'}}.
\end{equation}
Finally, if $x,y \in U_2$ are in the same $\Zp$-orbit, then
\begin{equation} \label{sekund}
|h^{-1}(y)-h^{-1}(x)| \leq C |y-x|^{\frac{m}{m'}}.
\end{equation}
\end{thm}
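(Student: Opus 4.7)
The plan is to reduce the problem, via Theorem \ref{lipeomorphic conjugacy}, to the case where $f$ and $g$ are the time-$1$ flows $f_{m+1,a}$ and $f_{m'+1,a'}$ of the vector fields $ax^{m+1}\partial_x$ and $a'y^{m'+1}\partial_y$. Since lipeomorphisms are bi-Lipschitz, precomposing or postcomposing $h$ with such equivalences changes only the constants in (\ref{parade})--(\ref{sekund}), not the exponent. Under the substitutions $\eta=\phi(x):=-1/(mx^m)$ and $\zeta=\psi(y):=-1/(m'y^{m'})$, the maps $f$ and $g$ become the translations $t_a:\eta\mapsto\eta+a$ and $t_{a'}:\zeta\mapsto\zeta+a'$ on neighborhoods of infinity. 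In these coordinates a $\Zp$-orbit of $f$ becomes a coset $\phi(x)+a\Zp$.

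The key observation is that $t_a$ and $t_{a'}$ are conjugated by the scalar map $\mu:\eta\mapsto(a'/a)\eta$, which, when translated back through the non-injective power coordinates, predicts the formal identity $h(x)^{m'}=(am/a'm')\,x^m$. This is the source of the exponent $m/m'$. Because $m$-th roots need not exist in $\Qp$, one cannot use this formula to define $h$ literally. Instead I choose a fundamental domain $\Omega_f\subset U_1$ for the $\Zp$-action on $U_1\setminus\{0\}$ by $f$, and for each $x\in\Omega_f$ pick some $h(x)\in U_2$ with $\psi(h(x))$ lying in the orbit $\mu(\phi(x))+a'\Zp$ and with $|h(x)|^{m'}$ of the correct order $\asymp|x|^m$; the discreteness of $|{\Qp}|$ together with a careful choice of $\Omega_f$ makes this possible consistently. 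I then extend $h$ by equivariance, $h(f^n(x)):=g^n(h(x))$ for $n\in\Zp$, and verify continuity using that the $f$-orbits in $U_1$ are small disks whose diameters tend to $0$ with $|x|$.

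Given such $h$, the estimate (\ref{parade}) is immediate from $|\phi(x)|=|mx^m|^{-1}$, $|\psi(h(x))|=|m'h(x)^{m'}|^{-1}$, and $|\psi(h(x))|=|a'/a|\cdot|\phi(x)|$. For (\ref{furrst}), if $x,y$ lie on a common $\Zp$-orbit of $f$ then $|x-y|\leq|a||x|^{m+1}$, and a mean-value estimate for the analytic $\phi$ near $x$ gives $|\phi(x)-\phi(y)|\asymp|x-y|/|x|^{m+1}$. Pushing through $\mu$, inverting $\psi$ by an analogous mean-value bound near $h(x)$, and substituting $|h(x)|\asymp|x|^{m/m'}$, the desired inequality reduces to the trivial $|x-y|\leq|x|$, valid on any orbit near $0$. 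Estimate (\ref{sekund}) is symmetric.

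For optimality, any intertwining homeomorphism must send the $\Zp$-orbit of $x$, of diameter $\asymp|a||x|^{m+1}$, to the $\Zp$-orbit of $h(x)$, of diameter $\asymp|a'||h(x)|^{m'+1}$; matching these forces $|h(x)|\asymp|x|^{m/m'}$ and shows the exponent cannot be improved. I expect the main obstacle to be the continuous gluing in the definition of $h$: arranging the orbit-representative assignment so that $h$ is a genuine continuous map between subsets of $\Qp$ despite the lack of global $m$-th roots. Once this is in place, the Hölder estimates fall out from the elementary translation-invariant calculations above.
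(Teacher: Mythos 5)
Your overall strategy---reduce to the flows $f_{m+1,a}$ and $f_{m'+1,a'}$ via Theorem \ref{lipeomorphic conjugacy} and then match $\Zp$-orbits---agrees with the paper's, but the execution diverges in two places, one of which contains a genuine error.

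First, the construction of $h$. You propose to match orbits by transporting them to translations at infinity via $\phi=\A_{m+1}$ and $\psi=\A_{m'+1}$, then matching the resulting cosets by the linear map $\mu:\eta\mapsto(a'/a)\eta$, picking $h(x)$ so that $\psi(h(x))\in\mu(\phi(x))+a'\Zp$. The trouble is that $\psi$ is very far from surjective onto a neighborhood of infinity: its image consists only of those $\zeta$ for which $-1/(m'\zeta)$ is an $m'$-th power in $\Qp$, which is a proper subset. For example, for $p=3$, $m'=2$, roughly half the cosets $\zeta+\Zp$ contain no point of the image of $\psi$ at all. Thus there are orbits $\mu(\phi(x))+a'\Zp$ with empty preimage under $\psi$, and the ``careful choice of $\Omega_f$'' you invoke cannot repair this---the specific correspondence dictated by $\mu$ is simply unrealizable. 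The paper avoids naming the correspondence: it uses only the orbit counts (Proposition \ref{counting orbits}) together with the bullseye-space machinery (Propositions \ref{reader}, \ref{harder}, \ref{betaprop}) to produce \emph{some} orbit-bijection with the right radius decay and builds $h$ orbitwise from that. If you drop the requirement that $\psi(h(x))$ lie in a prescribed coset and ask only that $h(x)$ sit on a $g$-orbit of appropriate norm, bijectively at the level of orbits, you recover the paper's argument---but then you still owe a proof that such a bijection exists with the needed radius decay, which is precisely the content of Propositions \ref{harder} and \ref{betaprop}.

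Second, and more seriously, your optimality argument is wrong. You assert that $h$ must send an orbit of $f$ of diameter $\asymp|a||x|^{m+1}$ to an orbit of $g$ of diameter $\asymp|a'||h(x)|^{m'+1}$, and that ``matching these forces $|h(x)|\asymp|x|^{m/m'}$.'' A homeomorphism is not an isometry and has no reason to preserve orbit diameters, so the premise is unjustified. But even granting it, equating $|x|^{m+1}$ with $|h(x)|^{m'+1}$ would give $|h(x)|\asymp|x|^{(m+1)/(m'+1)}$, which is \emph{not} $|x|^{m/m'}$ unless $m=m'$. The actual obstruction is a pigeonhole count of orbits, not of diameters: there are $\asymp p^{Nm}$ orbits of $f$ of radius at least $p^{-N}$, and only $\asymp p^{Mm'}$ orbits of $g$ of radius at least $p^{-M}$; if $|h(x)|\leq C|x|^{\alpha}$ with $\alpha>m/m'$, then by surjectivity too many $f$-orbits would have to map to too few $g$-orbits of moderate radius. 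This is Proposition \ref{fractions}, and it must replace your diameter-matching heuristic.

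Your mean-value derivation of (\ref{furrst}) and (\ref{sekund}) is essentially sound and parallels the paper's, which substitutes the exact identity $|f_{m+1,a}^{\circ z}(x)-x|=|azx^{m+1}|$ from Lemma \ref{lateruse} for your asymptotic $\asymp$; note only the one-sided bound $|h(x)|\leq C|x|^{m/m'}$ is actually used there, not the two-sided estimate your (unrealized) construction would purportedly give.
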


We arrive at this theorem by analyzing the closures in $\Qp$ of the orbits of points close to $0$, which we call ``$\Zp$-orbits''.  The number of $\Zp$-orbits of a given norm for $f$ and $g$ determine how rapidly $h$ needs to shrink or expand, and the result follows.

One of the most interesting aspects of the two theorems cited above is the affinity with known results in the archimedean theory. We will discuss this more fully in the next section, but the guiding principle we have used here is that, after applying appropriate estimates, the non-archimedean theory closely mirrors its archimedean counterpart. This is quite surprising, particularly given the great differences between the topologies on the fields $\Qp$ and that on ${\mathbf C}$.

Next, we answer these same questions for the multiplier maps $L_a$ in $\Qp$.
As in the previous case, a line is drawn quite naturally when we ask for the best H\"{o}lder estimate at $0$ for the intertwining maps $h$ between different $L_a$.  Between any two contracting multiplier maps, and between any two indifferent multiplier maps, we determine this largest exponent $\alpha$.
For $a$ indifferent, write $N(a)$ for the number of $\Zp$-orbits of $L_a$ of each radius.
For indifferent multiplier maps, we prove that $L_a$ and $L_{a'}$ are lipeomorphically equivalent if and only if $N(a)=N(a')$.  Indeed the best exponent $\alpha$ is the ratio of these two numbers.  We compute $N(a)$ in Corollary \ref{multorbs}.

Moreover we show that a conjugating map from a flow $f_{m+1,a}$ to an indifferent irrational multiplier map cannot satisfy {\it any} H\"{o}lder estimate at $0$.  Thus, a map $f(x)=x+ax^m+ \cdots$ with $a \neq 0$ is not lipeomorphically linearizable.

 As an example of how homeomorphic equivalence is not without merit, we consider functions $f(x)=x + a_{m+1}x^m+ \cdots$ defined on different $p$-adic spaces, for distinct prime numbers $p$.  In fact, they are {\it never} homeomorphically equivalent, even though the spaces themselves are homeomorphic.

The paper is organized as follows: we begin with a short survey of known results in both the archimedean and non-archimedean categories in Section \ref{section:s1.5}.

Section \ref{section:s2} is devoted to some preliminaries on the additive and multiplicative theory for a general $p$-adic field. In section \ref{section:s3} we define the time-$a$ maps of the vector fields $V_{m+1}$ within the fields $\Qp$, as well as some topological analysis of the structure of their orbits.  Section \ref{section:s4} proves the lipeomorphism result of Theorem \ref{lipeomorphic conjugacy}.  In covering all $p$-adic fields, and all orders, it necessarily takes some effort.  The reader will find many of the proofs much easier in the case where $K=\Qp$ and when $m=2$.  On the other hand, the theory extends all the way to $K=\Cp$.  We also show that the conjugating lipeomorphisms, which are defined on disks, even extend to the corresponding Berkovich disk.

The rest of the paper has a cosier flavor, as we focus on $K=\Qp$.  It may be read independently of the earlier proofs. Section \ref{section:s5} is used to describe a general theory of ``bullseye spaces'', the kinds of topological spaces one gets from the set of orbits for our functions, as we take aim at behavior near $0$.  Their theory is used to give a detailed analysis of pointwise estimates of conjugating homeomorphisms between maps defined locally on $\Qp$.  Within the section can be found the proof of Theorem \ref{local Holder continuity}.

In Section \ref{Multipliers} we find the best H\"{o}lder constants for intertwining maps between multiplier maps, again using the theory of bullseye spaces.  Section \ref{section:s6} settles homeomorphic equivalence for functions defined on different $p$-adic spaces. Finally, we include an appendix, giving a technical proof of a result which is needed for Theorem \ref{lipeomorphic conjugacy}.

This project began while the authors were visiting Purdue University.  The research continued while the first author visited Kansas State University and the second visited the University of Oklahoma.  The authors are thankful for the support of these departments.  In addition, the authors would like to thank Rob Benedetto for a series of communications on the topics mentioned here, and in particular for suggesting the material in Section \ref{notstolovitch}.

\section{History}\setcounter{equation}{0}\label{section:s1.5}

It is worth summarizing some of the salient points of the archimedean theory, for if the field $K$ was chosen instead to be ${\mathbf R}$ or ${\mathbf C}$, this subject has been developed for many years by a variety of mathematicians.  One of the first attempts at understanding the topological structure of a holomorphic (i.e. analytic in ${\mathbf C}$) mapping $f(z)=z+\sum_{n\geq m+1}a_{n}z^{n}$ tangent to the identity and locally defined near $0\in {\mathbf C}$ was given by Fatou \cite{Fatou} in the late 1910s. While a topological conjugacy was not constructed on a full neighborhood of $0$, Fatou discovered that the orbits possessed a ``flower'' structure with a number of ``petals'' dependent only on the number $m$. Using this geometric argument, he was able to conclude that for $K={\mathbf C}$, the number $m$ is a topological invariant. 

\begin{figure}[h]
\epsfig{file=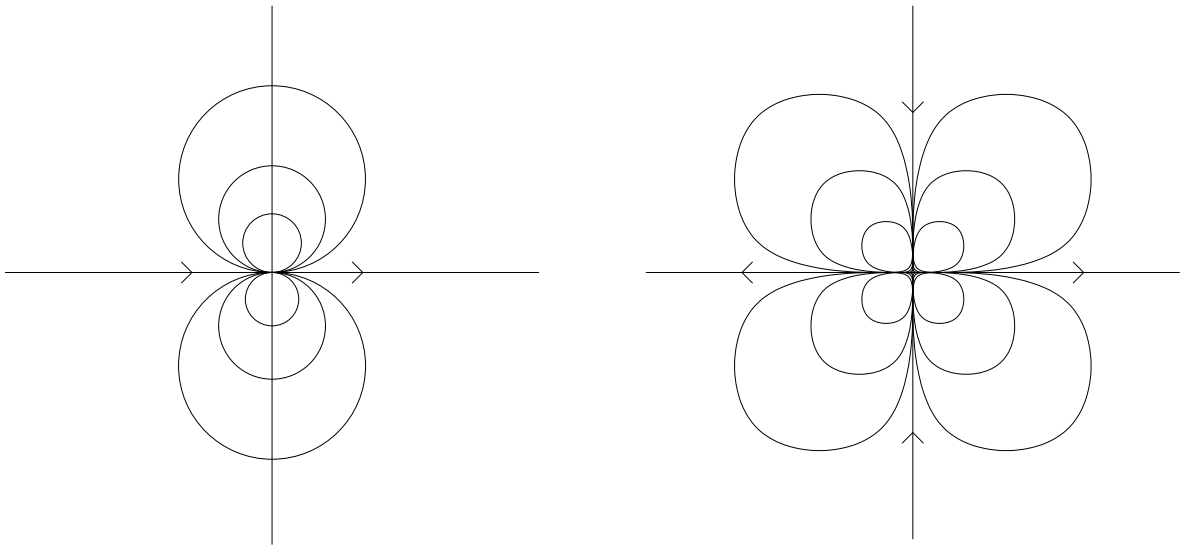,scale=.71}
\caption{The complex orbit structure of the maps $f_{2,1}$ and $f_{3,1}$. Note the differing numbers of ``petals" for each of the ``flowers".}
\end{figure}

However, it was not until the late 70's that it was shown that $m$ is the {\it only} topological invariant. The following result, proven independently by Camacho and Shcherbakov, demonstates this:
\newline

\noindent{\bf Theorem (Camacho \cite{Camacho}, Shcherbakov \cite{Shch})}. {\it Let $2\leq m\in {\N}$, and suppose that
$f(z)=z+az^{m+1}+\cdots $ is a function defined and holomorphic in a small neighborhood $U$ of the origin $0\in {\mathbf C}$, with $a\neq 0$. Then, there is a
neighborhood $0\in U'\subset U$ and a homeomorphism $h$ defined in $U'$ so that $(h^{-1}\circ f\circ h)(z)=\pi _{m+1}(z)=z+z^{m+1}$}.\newline
\noindent

We will speak more on Shcherbakov's version of this result in a moment. However, we note that a simple computation shows that the time-$1$ map $f_{m+1,1}$ of $V_{m+1}$ can be expanded as a power series centered at $0\in {\mathbf C}$ with only real (indeed, only rational) coefficients. Thus, we can also view these mappings as real-analytic diffeomorphisms defined on an interval $(-\delta ,\delta )$. However, the dynamics of these mappings as germs defined in $({\mathbf R},0)$ are, predictably, much less interesting than their complex counterparts. We recall a simple argument that demonstrates this. \newline

\noindent{\bf Lemma}. {\it Fix $m, n\in {\Z}$ with $m\geq 1, n\geq 1$. For a sufficiently small  $0<\varepsilon << 1$, the maps $f_{m+1,1}$ and $f_{n+1,1}$ are topologically equivalent on the interval $[0,\varepsilon ]$}.\newline

\begin{proof}
The change of variable $h_{m+1}(x)=mx^{m}$ conjugates the map $f_{2,1}$ to $f_{m+1,1}$. The result now follows.
\end{proof}

Other variations of this result are possible (e.g. by considering only odd $m$, one can construct homeomorphisms between different maps $f_{m+1,1}$ and $f_{n+1,1}$ on a open interval centered at $0$), but the point is this: the number $m$ is no longer a topological invariant when the maps $f$ are considered as real-valued, local real-analytic diffeomorphisms. Note, however, that the conjugating maps (as described in the Lemma above) possess the same H\"{o}lder-type estimates as those proven in Theorem \ref{local Holder continuity}.

Now, we further expound upon Shcherbakov's work. Let $f(z)=z+\sum_{n\geq m+1}a_{n}z^{n}$, with $a_{m+1} \neq 0$. The construction of the homeomorphism of Shcherbakov yields strong estimates which must be satisfied by $h$. In fact, fix $\varepsilon >0$ and $\delta >0$. Then, the neighborhood
$U'$ and the local conjugating homeomorphism $h$ can be chosen so that $h(z)=z+\ol h(z)$, where
\begin{enumerate}
\item
$|\ol h(z)|\leq |z|^{m+1-\varepsilon }$ for all $z\in U'$, and
\item
$|\ol h(z_{2})-\ol h(z_{1})|\leq \delta |z_{2}-z_{1}|$ for all $z_{1},z_{2}\in U'$.
\end{enumerate}

In other words, the number $m$ is a (complex-analytic) locally-lipeomorphic invariant. We will show that these estimates hold for our conjugating homeomorphisms in our proof of Theorem \ref{curbed}. In fact, the proof of Theorem \ref{lipeomorphic conjugacy} is strongly rooted in the techniques elaborated in the paper of Shcherbakov \cite{Shch}.

\section{Preliminaries and Notation}\setcounter{equation}{0}\label{section:s2}

\subsection{Additive Theory of $p$-adic Fields} \label{Additive elements}
In this paper we write $\N$ for the set of positive integers, and $\W$ for the set of nonnegative integers.

We will discuss two kinds of fields in this paper:

First we have the $p$-adic numbers $\Qp$ for a given prime integer $p \in \Z$.  If $a \in \Z$ is nonzero, let $\ord_p(a)$ be the maximal power of $p$ which divides $a$.  Then $\Qp$ may be defined as the completion of the rational numbers $\Qp$ under the norm $\left| \frac{m}{n} \right|=p^{-\ord_p(m)+\ord_p(n)}$.  This norm (and its extension to $\Qp$) is non-archimedean in the sense that one has $|x+y| \leq \max(|x|,|y|)$ for all $x,y \in \Qp$.  The subring $\Zp$ of $\Qp$ is given by those $p$-adic numbers whose norm is no greater than $1$.  One calls $\Zp$ the ring of $p$-adic integers.

Secondly we have the general case of a $p$-adic field $K$, which we define now.  The reader should be cautioned that there are variations of this definition in the literature; we have chosen one which is convenient for our purposes.

\begin{define} \label{pdef} Let $p$ be a prime number.  By a $p$-adic field $K$, we mean a complete non-archimedean field $(K,| \cdot|)$of characteristic zero so that $|p|<1$. \end{define}

Note that $K$ contains a copy of $\Q$ by virtue of having characteristic zero.  Moreover since it is a complete field with $|p|<1$ it contains
a canonical copy of $\Qp$ as well.  So $K$ is a $p$-adic field if it is a complete field extension of the non-archimedean field $\Qp$.  We will assume for simplicity that $|p|=\frac{1}{p}$.  The field $\Cp$, defined as the topological completion of the algebraic closure of $\Qp$, is a $p$-adic field.

For an introduction and basic facts about non-archimedean fields, the interested reader can consult a
standard text, such as \cite{Koblitz},\cite{Schikhof}, or our earlier paper \cite{Jenkins-Spallone}.

\begin{define} Given a $p$-adic field $K$, let $\Delta=\{x \in K \mid |x| \leq 1 \}, \pp=\{x \in K \mid |x|<1 \}$, and $\Delta^\times=\{x \in K \mid |x|=1 \}$.  Also let $\Delta'=\Delta - \{ 0 \}$. For an integer $m \geq 1$, put $\mu_m(K)=\{ \zeta \in K \mid \zeta^m=1 \}$.  For a number $r>0$, and $a \in K$, write $D(a,r)$ for the disk $\{ x \in K \mid |x-a| < r \}$, $\ol{D}(a,r)$ for the closed disk $\{ x \in K \mid |x-a| \leq r \}$, and $C(a,r)$ for the circle $\{ x \in K \mid |x-a|=r \}$.
\end{define}
The subset $\Delta$ is a subring of $K$, with $\pp$ as maximal ideal and $\Delta^\times$ as the group of units.  The open set $\Delta'$ is a punctured neighborhood of $0$.

Let $a \in K$ and consider the translation map $t_a: \eta \mapsto \eta +a$.
Our next task in this section is to pave the way for a study of the dynamics of $t_a$.  We need a good vector space complement to the canonical subspace $\Qp \cdot a$ inside $K$, and an estimate for how the norms fit together.

More generally, let $V$ be a $\Q_p$-vector space with a non-archimedean norm.  Write $\Delta_V= \{ v \in V \mid |v| \leq 1 \}$.  Then $\Delta_V$ is a
$\Zp$-module and $\Delta_V/p \Delta_V$ is a $\F_p$-vector space.

The following proposition can be found in \cite{Bourbaki1} (as Exercise 7 in Section 1 of Chapter I).

\begin{prop} \label{Bourbaki}
Let $(e_{\lambda})_{\lambda \in L}$ be a family of elements of $\Delta_V$ such that the images of $e_{\lambda}$ in $\Delta_V/p \Delta_V$ form a
basis of this $\F_p$-vector space.  Then the set $(e_{\lambda})_{\lambda \in L}$ is linearly independent and the vector space $F$ of $V$
generated by $(e_{\lambda})$ is dense in $V$.  If we put $|v|_1= \max_{\lambda} |\xi_\lambda|$ for every $v=\sum_{\lambda}  \xi_\lambda
e_\lambda \in F$, then $\frac{1}{p}|v|_{1}\leq |v|\leq |v|_{1}$.
\end{prop}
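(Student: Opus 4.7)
The plan is to prove the three assertions (linear independence, density, and the two-sided norm estimate) in an order that lets the norm estimate do most of the work. I would start with the upper bound $|v|\leq |v|_1$, which is immediate from the ultrametric inequality together with $|e_\lambda|\leq 1$ (since each $e_\lambda\in\Delta_V$). Linear independence then reduces to the lower bound, since a nontrivial relation $v=0$ would force $|v|_1=0$.

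The crux is the lower bound $\tfrac{1}{p}|v|_1\leq |v|$. My approach is to exploit the discreteness of $|\Qp^\times|=p^\Z$ to rescale: write $v=\sum_\lambda \xi_\lambda e_\lambda$ (a finite sum, by definition of $F$), set $M=|v|_1$, and note $M=p^{-n}$ for some $n\in\Z$. Replacing $v$ by $p^n v$, I may assume every $\xi_\lambda$ lies in $\Zp$ with at least one a unit. Reducing modulo $p\Delta_V$, the image becomes $\sum \bar\xi_\lambda \bar e_\lambda\in\Delta_V/p\Delta_V$, a nontrivial $\F_p$-combination of the $\bar e_\lambda$; the hypothesis that the $\bar e_\lambda$ form a basis makes this image nonzero, so $v\notin p\Delta_V$, i.e. $|v|>1/p$. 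Unscaling gives $|v|>p^{-n-1}=|v|_1/p$.

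For density, I would argue by successive approximation modulo powers of $p$. Given $v\in V$, scale by a power of $p$ to reduce to $v\in\Delta_V$. Since the $\bar e_\lambda$ span $\Delta_V/p\Delta_V$ over $\F_p$, lift a finite $\F_p$-expansion of $\bar v$ to coefficients $c^{(0)}_\lambda\in\Zp$ with $v-\sum c^{(0)}_\lambda e_\lambda\in p\Delta_V$. Iterating on $p^{-1}(v-\sum c^{(0)}_\lambda e_\lambda)\in \Delta_V$ produces coefficients $c^{(k)}_\lambda\in\Zp$, all but finitely many zero at each step, so that the partial sums
\[
w_N \;=\; \sum_{k=0}^{N} p^k \sum_\lambda c^{(k)}_\lambda e_\lambda \;\in\; F
\]
satisfy $v-w_N\in p^{N+1}\Delta_V$, hence $|v-w_N|\leq p^{-N-1}\to 0$.

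The main obstacle I anticipate is the lower bound in step two: it is the place where the abstract hypothesis (an $\F_p$-basis in the quotient) has to be converted into a quantitative norm statement, and the conversion depends on the discreteness of the value group $|\Qp^\times|$ so that rescaling lands us in the $\Zp$-module $\Delta_V$ with a coefficient of norm exactly $1$. Once that step is in hand, linear independence is automatic and density is a routine successive-approximation argument using exactly the same quotient.
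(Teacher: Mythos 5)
Your proof is correct and follows essentially the same route as the paper's: the upper bound is immediate from the ultrametric inequality and $e_\lambda\in\Delta_V$; the lower bound comes from rescaling a given finite expansion so that all coefficients lie in $\Zp$ with at least one a unit (what the paper calls a \emph{reduced} expansion) and then reducing modulo $p\Delta_V$; linear independence is a corollary of that estimate; and density is obtained by the same successive approximation modulo $p,p^2,\dots$.
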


We will apply this result to a $p$-adic field $K$, which is certainly a $\Q_p$-vector space.
Let $a \in K^\times$.  We will set up a norm $| \cdot|_1$ on $K$ based on $a$.

\begin{define} Pick $a_0 \in \Delta - p \Delta$ so that $a=a_0p^i$ for some integer $i$. Let $e_{\lambda_0}=a_0$.
We may extend $e_{\lambda_0}$ to a set $(e_\lambda)$ as in Proposition \ref{Bourbaki}.  Thus we may write $F$ as the direct sum of $\Q_p \cdot a$ and the $\Q_p$-subspace $X_0$ of $F$ generated by the
other $e_\lambda$.  Write $X$ for the completion of $X_0$ inside $K$.  It is clear that $K$ is the direct sum of $\Q_p \cdot a$ and $X$.  Define $|v|_1= \max_{\lambda} |\xi_\lambda|$ for every $v=\sum_{\lambda}  \xi_\lambda e_\lambda \in F$;
this extends continuously to a unique norm $|v|_1$ on $K$. \end{define}

For example, $|a|_1=p^{-i}$.  Then we have
\begin{lemma} \label{independent}
If $\xi \in \Q_p$ and $x \in X$, then $|\xi a +x|_1=\max (|\xi|p^{-i}, |x|_1)$.
\end{lemma}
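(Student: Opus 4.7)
My plan is to reduce the statement to a direct finite computation in $F$ and then pass to the limit. Since $a = p^i a_0 = p^i e_{\lambda_0}$, for any $\xi \in \Qp$ we have $\xi a = (\xi p^i) e_{\lambda_0}$ in $F$. If $x \in X_0$ is expanded as $x = \sum_{\lambda \neq \lambda_0} \xi_\lambda e_\lambda$, then $\xi a + x$ lies in $F$ with unique coordinates $\xi p^i$ at $\lambda_0$ and $\xi_\lambda$ at the other $\lambda$, so by the defining formula for $|\cdot|_1$,
\begin{equation*}
|\xi a + x|_1 = \max\bigl(|\xi p^i|,\, \max_{\lambda \neq \lambda_0} |\xi_\lambda|\bigr) = \max\bigl(|\xi| p^{-i},\, |x|_1\bigr).
\end{equation*}
This settles the lemma whenever $x \in X_0$.

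To handle an arbitrary $x \in X$, I would approximate $x$ by a sequence $x_n \in X_0$. Proposition \ref{Bourbaki} supplies the two-sided bound $\frac{1}{p}|v|_1 \leq |v| \leq |v|_1$, so the norms $|\cdot|$ and $|\cdot|_1$ define the same topology on $K$; thus $x_n \to x$ holds simultaneously in both norms. Continuity of $|\cdot|_1$ (via the reverse triangle inequality) then gives $|x_n|_1 \to |x|_1$ and $|\xi a + x_n|_1 \to |\xi a + x|_1$, and the identity established for $X_0$ passes to the limit.

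There is no serious obstacle once the direct-sum decomposition $K = \Qp \cdot a \oplus X$ and the defining formula for $|\cdot|_1$ are in hand; the only delicate point is remembering that the claimed formula is asserted for elements of the completion $X$, not merely of $X_0$, which is exactly what forces the continuity step. It is worth noting that the decomposition is well-posed because $\Qp \cdot a = \Qp \cdot e_{\lambda_0}$ intersects $X$ trivially (the $e_\lambda$ are $\Qp$-linearly independent by Proposition \ref{Bourbaki}), so the coefficient $\xi$ and the $X$-component $x$ of any $v \in K$ are uniquely determined.
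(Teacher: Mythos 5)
Your proof is correct and follows essentially the same route as the paper: verify the formula directly on $X_0$ via the defining expansion in the $e_\lambda$, then pass to the limit along a sequence $x_n \in X_0$ converging to $x \in X$, using continuity of $|\cdot|_1$. The only difference is that you spell out the $X_0$ computation and the norm-equivalence justification for the limit step, both of which the paper leaves implicit.
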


\begin{proof}
The lemma is immediate if we replace $X$ with $X_0$.   For the general case, if $x_n$ is a sequence in $X_0$ converging to $x \in X$, then
\begin{equation}
\begin{split}
 |\xi a+x|_1 &= \lim_n |\xi a+x_n|_1 \\
 &= \lim_n \max (|\xi|p^{-i}, |x_n|_1)\\
 &= \max ( |\xi|p^{-i}, |x|_1 ). 
\end{split}
\end{equation}
\end{proof}

\subsection{Multiplicative Theory of $p$-adic Fields}\label{decomposition}
Recall that as a topological group, we have the basic decomposition $\Qp^\times \cong  <p> \times \Zp^\times$, where $<p>$ denotes the multiplicative subgroup generated by $p$.  We will produce a similar decomposition for a general $p$-adic field $K$; in particular for $p$-adic fields without a discrete valuation such as $\Cp$.  Rather than $\Zp^
\times$, we use a certain set of coset representatives $\Delta_\pi$ for $K^\times/<\pi>$ for any $\pi \in \pp$.
Our eventual goal here is to produce good fundamental domains $X_\zeta$ for the $m$th power map near infinity.

\begin{define} Pick a nonzero $\pi \in \pp$.  Let $\Delta_\pi= \{ x \in K \mid |\pi| < |x| \leq 1 \}$. \end{define}
The reader is urged to think of the case where $K=\Qp$ and $\pi=p$, in which case $\Delta_\pi=\Delta^\times=\Zp^\times$.  In general it is a subset of $\Delta$ containing $\Delta^\times$.

\begin{prop} \label{decomp1}  As topological spaces we have
\begin{equation} \label{dec}
K^\times  \cong < \pi> \times \Delta_\pi \text{ and } \Delta' \cong \{1, \pi,\pi^2, \ldots \} \times \Delta_\pi.
\end{equation}
\end{prop}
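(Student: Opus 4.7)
The plan is to write down the obvious candidate bijection and then verify it is bicontinuous in each direction. Consider the multiplication map $\phi: \langle \pi \rangle \times \Delta_\pi \to K^\times$, $(\pi^n, u) \mapsto \pi^n u$, which is plainly continuous since multiplication is continuous on $K$. To see it is bijective, note that for any $x \in K^\times$ there is a unique $n \in \Z$ with $|\pi|^{n+1} < |x| \leq |\pi|^n$, because the intervals $(|\pi|^{n+1}, |\pi|^n]$ partition $(0, \infty)$. For this $n$, the element $u := \pi^{-n} x$ satisfies $|u| \in (|\pi|, 1]$, so $u \in \Delta_\pi$, and the pair $(\pi^n, u)$ is clearly the unique preimage of $x$ under $\phi$.

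For continuity of $\phi^{-1}$, I would check the two component projections $x \mapsto \pi^{n(x)}$ and $x \mapsto u(x) = \pi^{-n(x)} x$ separately. The first key step is to observe that $\langle \pi \rangle \subset K^\times$ carries the discrete topology: the powers $\pi^n$ have pairwise nonzero distance (a short calculation gives $|\pi^n - \pi^m| = |\pi|^{\min(n,m)}$, using $|1 - \pi^k| = 1$ for $k \geq 1$), so each $\pi^n$ is isolated. The second key step is to show that $x \mapsto n(x)$ is locally constant: for $y$ in the ball $\{y : |y - x_0| < |x_0|\}$, the ultrametric inequality forces $|y| = |x_0|$, and hence $n(y) = n(x_0)$. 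Combining these two observations gives continuity of $x \mapsto \pi^{n(x)}$. Continuity of $x \mapsto u(x)$ then follows because on the neighborhood where $n$ is constant, this is just multiplication by the fixed scalar $\pi^{-n(x_0)}$.

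The second decomposition will drop out by restricting the correspondence to $\Delta' = \{x \in K : 0 < |x| \leq 1\}$. The condition $|x| \leq 1$ translates exactly to $n(x) \geq 0$, which selects precisely the powers $\pi^n$ with $n \in \W$, giving the homeomorphism $\Delta' \cong \{1, \pi, \pi^2, \ldots\} \times \Delta_\pi$, with the first factor carrying the induced discrete topology.

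I do not expect any substantial obstacle here. The only point that requires a moment of care is the discreteness of $\langle \pi \rangle$, which could fail in principle for subgroups of $K^\times$ when the valuation on $K$ is not discrete (as in $\Cp$); but the subgroup generated by a single element with $0 < |\pi| < 1$ is always discrete because its points have strictly decreasing positive norms.
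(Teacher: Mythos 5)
Your proposal is correct and follows the same route as the paper: the same multiplication map, the same bijectivity argument via choosing $n$ with $|\pi|^{n+1}<|x|\leq |\pi|^n$, and then bicontinuity. You simply spell out the continuity of the inverse (discreteness of $\langle\pi\rangle$, local constancy of $n(x)$) which the paper merely asserts is clear.
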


\begin{proof}
We prove the proposition for $K^\times$; the case of $\Delta'$ is similar.
Consider the map $\mu: < \pi> \times \Delta_\pi \to K^\times$ given by multiplication. Let $x \in K^\times$, and pick $m \in \Z$ so that
\[   |\pi|^{m+1} < |x| \leq |\pi|^m. \]
Then it is clear that $y=\frac{x}{\pi^m} \in \Delta_\pi$, and so we have $\mu(\pi^m,y)=x$.  Thus $\mu$ is surjective.  Next, suppose that $\mu(\pi^a,x)=\mu(\pi^b,y)$.  Then $yx^{-1}$ is a power of $\pi$ but
\begin{equation*}
|\pi|  < |yx^{-1}| < |\pi^{-1}|.
\end{equation*}
It follows that $yx^{-1}=1$ and so $a=b$. Therefore $\mu$ is a bijection.  It is clearly a continuous and open map as well, and we conclude that $\mu$ is a homeomorphism.

\end{proof}

Next we recall the following results about roots of units and power series.

\begin{prop} \label{Hensel}
Let $m \geq 1$, and $K$ a $p$-adic field.
\begin{enumerate}
\item Let $b \in \Delta^\times$ and $I \subsetneq \Delta$ an ideal.
Let $\alpha_0 \in \Delta$, and suppose that $\alpha_0^m \equiv b \Mod m^2I$.  Then there is a unique $\alpha \in \Delta$ so that $\alpha^m=b$ and $\alpha \equiv \alpha_0 \Mod mI$.

\item Let $a \in K$.  Then there is a unique analytic function $g(x) \in K[|x|]$ so that $g(x)^m=1-amx^m$ and $g(x)=1-ax^m+O(x^{m+1})$.
\end{enumerate}

\end{prop}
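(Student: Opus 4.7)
Both parts of the proposition are instances of Hensel's lemma for the polynomial $f(T) = T^m - b$. The plan is to run the Newton iteration
\begin{equation*}
\alpha_{n+1} \;=\; \alpha_n \;-\; \frac{\alpha_n^m - b}{m\,\alpha_n^{m-1}},
\end{equation*}
using the hypothesis $\alpha_0^m \equiv b \pmod{m^2 I}$ to compensate precisely for the fact that $|f'(\alpha_0)| = |m|$ may be small.

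For part (i), I would first observe that $\alpha_0^m - b \in m^2 I \subseteq \pp$ and $b \in \Delta^\times$ together force $|\alpha_0| = 1$, so $\alpha_0 \in \Delta^\times$. Because $I$ is a proper ideal of $\Delta$, every $s \in I$ satisfies $|s| < 1$, hence every element of $m^2 I$ has norm strictly less than $|m|^2$; set $c = |\alpha_0^m - b|/|m|^2 < 1$. An induction shows $\alpha_n \in \Delta^\times$, $\alpha_n \equiv \alpha_0 \pmod{mI}$, and $|\alpha_n^m - b| \leq |m|^2 c^{2^n}$, whence $|\alpha_{n+1} - \alpha_n| \leq |m|\,c^{2^n}$. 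Completeness of $K$ then produces a limit $\alpha$ solving $\alpha^m = b$ with $\alpha \equiv \alpha_0 \pmod{mI}$. For uniqueness, if $\alpha, \beta$ are two such solutions I would write $\delta = \alpha - \beta \in mI$ and factor
\begin{equation*}
0 \;=\; (\beta + \delta)^m - \beta^m \;=\; \delta\Bigl(m\beta^{m-1} + \binom{m}{2}\beta^{m-2}\delta + \cdots + \delta^{m-1}\Bigr);
\end{equation*}
the parenthesized quantity has norm exactly $|m|$, since $|m\beta^{m-1}| = |m|$ while all remaining terms lie in $m\pp$ (hence have norm strictly less than $|m|$), so by the ultrametric inequality $\delta = 0$.

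For part (ii), formal existence and uniqueness are a routine recursion: writing $g(x) = 1 + \sum_{k \geq 1} c_k x^k$, the coefficient-wise equation $g(x)^m = 1 - amx^m$ together with the normalization $g(x) = 1 - ax^m + O(x^{m+1})$ pins down $c_1 = \cdots = c_{m-1} = 0$ and $c_m = -a$, and each subsequent $c_k$ is determined by a linear equation whose leading coefficient is $m$. To promote this to a genuine analytic function, I would apply part (i) pointwise: for $x \in K$ with $|x|$ small enough that $|amx^m| < |m|^2$, take $\alpha_0 = 1$, $b = 1 - amx^m$, and let $I$ be the proper ideal $\{y \in \Delta : |y| \leq |ax^m/m|\}$; part (i) then yields a unique $g(x) \in \Delta^\times$ with $g(x)^m = 1 - amx^m$ and $g(x) \equiv 1 \pmod{\pp}$. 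An elementary bound on the recursively-defined $c_k$ shows the formal series converges on the same disk, and the uniqueness statement of (i) identifies its sum with the pointwise $g$, giving analyticity.

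The main obstacle is the careful bookkeeping of the factors of $m$ in residue characteristic dividing $m$: the usual Hensel condition $f(\alpha_0) \in \pp$ is too weak, and the strengthened hypothesis $\alpha_0^m - b \in m^2 I$ is exactly what drives the quadratic convergence past the loss coming from $|f'(\alpha_0)|^{-1} = |m|^{-1}$ — while the same strictness (properness of $I$, hence $I \subseteq \pp$) is what makes the uniqueness argument rigid.
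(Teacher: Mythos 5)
Your proposal is essentially correct, and it fills in exactly what the paper delegates to a citation: the paper's proof of this proposition is two lines long, referring part (i) to Hensel's Lemma as stated in Bourbaki (\emph{Commutative Algebra}, III.4.5, Cor.~1) and part (ii) to ``Hensel's Lemma for power series.'' The Newton-iteration argument you give is the standard proof underlying that citation, so methodologically you are on the same path. Two points deserve tightening. First, in the induction for part~(i) you should carry the \emph{membership} $\alpha_n^m - b \in m^2 I$ (and hence $\alpha_{n+1}-\alpha_n \in mI$), not merely the norm bound $|\alpha_n^m - b|\le |m|^2 c^{2^n}$: when $K$ is not discretely valued (e.g.\ $K=\Cp$) the ideal $m^2I$ need not be cut out by a norm inequality, so the norm estimate alone does not return you to $m^2I$. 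This membership does follow from $\alpha_n\equiv \alpha_0\ \mathrm{mod}\ mI$ by expanding $\alpha_n^m=(\alpha_0+mt)^m$ term by term, so the fix is routine but should be made explicit. Second, the sentence ``an elementary bound on the recursively-defined $c_k$ shows the formal series converges on the same disk'' is the one genuine gap: each step of the recursion divides by $m$, and when $p\mid m$ this is not automatically harmless. The cleanest repair is to identify $g(x)$ with the $p$-adic binomial series $\sum_{j\ge 0}\binom{1/m}{j}(-amx^m)^j$ and invoke the standard estimate $|1/j!|\le p^{j/(p-1)}$ to obtain a positive radius of convergence; once convergence is in hand, your appeal to the uniqueness in part~(i) to match the analytic sum with the pointwise root is correct.
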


\begin{proof}  Statement (i) follows from the well-known Hensel's Lemma.  For the generality of this paper, we refer the reader to \cite{Bourbaki2} III, Section 4.5, Corollary 1.  Statement (ii) follows from Hensel's Lemma for power series.
\end{proof}

We denote the function $g(x)$ in (ii) throughout this paper as $\sqrt[m]{1-amx^m}$.  Here are some corollaries of Proposition \ref{Hensel} (i).
\begin{cor} \label{mI}
Let $a \in \Delta$, $m \geq 1$, and $n \geq 2$.  Suppose that $x \in m^2 \Delta$.  Then there is a unique $m$th root of $1-amx^n$, written $\sqrt[m]{1-amx^n}$, so that
\begin{equation*}
 \sqrt[m]{1-amx^n} \equiv 1-ax^n \Mod m x^{n+1}.
 \end{equation*}
Moreover,
\begin{equation*}
\left| \sqrt[m]{1-amx^n}-1 \right|=|ax^n|
\end{equation*}
\end{cor}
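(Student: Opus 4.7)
My strategy is to realize $\sqrt[m]{1-amx^n}$ as a Hensel lift of the explicit approximation $\alpha_0:=1-ax^n$, and then to extract the exact norm from the standard factorization $\alpha^m-1=(\alpha-1)(1+\alpha+\cdots+\alpha^{m-1})$.

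For the first assertion I would invoke Proposition~\ref{Hensel}(i) with $b=1-amx^n$, the above $\alpha_0$, and ideal $I=x^{n+1}\Delta$ (the case $x=0$ being trivial). Since $|amx^n|\le|x|^n\le|m|^{2n}<1$, we have $b\in\Delta^\times$. The binomial expansion
\[
(1-ax^n)^m - (1-amx^n) \;=\; \sum_{k=2}^{m}(-1)^k\binom{m}{k} a^k x^{kn}
\]
then reduces the hypothesis $\alpha_0^m\equiv b\pmod{m^2 I}$ to the inequality $|x|^{kn}\le|m|^2|x|^{n+1}$ for each $k\ge 2$. Using $|x|\le|m|^2$, this becomes $n(k-1)\ge 2$, which is automatic for $k,n\ge 2$. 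Proposition~\ref{Hensel}(i) therefore yields a unique $\alpha\in\Delta$ with $\alpha^m=1-amx^n$ and $\alpha\equiv 1-ax^n\pmod{mx^{n+1}\Delta}$, which is the claimed congruence.

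For the norm identity I would set $\beta=\alpha-1$ and factor
\[
-amx^n \;=\; \alpha^m-1 \;=\; \beta\bigl(m+\tbinom{m}{2}\beta+\cdots+\beta^{m-1}\bigr) \;=:\; \beta\cdot T.
\]
The previous step gives $|\beta|\le\max(|ax^n|,|mx^{n+1}|)\le|m|^{2n}$, which is strictly smaller than $|m|$. Since every summand of $T-m$ is a $\beta$-multiple, the ultrametric inequality yields $|T-m|\le|\beta|<|m|$ and hence $|T|=|m|$. Dividing, $|\beta|=|amx^n|/|m|=|ax^n|$, as required.

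The main obstacle is purely bookkeeping: one must confirm that each term $\binom{m}{k}a^k x^{kn}$ for $k\ge 2$ actually lies in $m^2 x^{n+1}\Delta$, even though $\binom{m}{k}$ provides no guaranteed extra factors of $m$. The hypothesis $x\in m^2\Delta$ combined with $n\ge 2$ is precisely what compensates for this, and once it is verified the factoring step is immediate.
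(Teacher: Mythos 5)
Your strategy is essentially the one the paper itself has in mind: apply Proposition~\ref{Hensel}(i) with $\alpha_0=1-ax^n$ and $I=x^{n+1}\Delta$, verify the congruence $\alpha_0^m\equiv b\pmod{m^2I}$ by the binomial expansion (the key numeric fact being $(k-1)n\geq 2$ for $k,n\geq 2$), and then extract the norm from the factorization $\alpha^m-1=\beta\,T$ with $T=m+\binom{m}{2}\beta+\cdots+\beta^{m-1}$ and $|T|=|m|$. That factorization argument for $|\beta|=|ax^n|$ is clean and correct.

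There is, however, a real lacuna that appears in two places: you use $|m|^{2n}<1$ to conclude $b\in\Delta^\times$, and later $|m|^{2n}<|m|$ to conclude $|T-m|\leq|\beta|<|m|$, hence $|T|=|m|$. Both strict inequalities fail precisely when $p\nmid m$, i.e.\ $|m|=1$. In that case the hypothesis $x\in m^2\Delta$ gives no information beyond $x\in\Delta$, and the statement as written can actually be false: with $p=3$, $m=n=2$, $a=x=1$ one has $1-amx^n=-1$, which has no square root in $\Q_3$. What goes wrong is also visible in the Hensel step: Proposition~\ref{Hensel}(i) requires $I\subsetneq\Delta$, but $I=x^{n+1}\Delta$ is proper only when $|x|<1$ (or $x=0$). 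So one must supply the (implicit, and satisfied in all the paper's applications, where $x\in\pp$) hypothesis $|x|<1$. With $|x|<1$ in hand, the two problematic estimates go through even when $|m|=1$: $|amx^n|\leq|x|^n<1$ gives $b\in\Delta^\times$, and $|\beta|\leq\max(|ax^n|,|mx^{n+1}|)\leq|x|^n<1=|m|$ gives $|T|=|m|$. When $|m|<1$ your original bounds via $|x|\leq|m|^2$ are fine. So the proof needs this small patch but is otherwise the paper's intended argument.
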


\begin{proof} This follows by applying Proposition \ref{Hensel} to $I=x^{n+1}\Delta$, together with some calculation.
\end{proof}

\begin{cor} \label{Hensel2}

Let $K$ be a $p$-adic field, and $m \geq 1$.
\begin{enumerate}
\item Let $b \in \Delta^\times$.  Then the equation $x^m=b$ has a solution in $\Delta$ if and only if it has a solution in $\Delta/m^2 \pp $.

\item The quotient map $\mu_m(K) \ra \left( \Delta/m \pp \right)^\times$ is injective.

\end{enumerate}

\end{cor}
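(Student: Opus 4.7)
The plan is to derive both parts directly from the Hensel-type lifting statement of Proposition \ref{Hensel}(i).

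For part (i), the forward direction is immediate: a root in $\Delta$ reduces to a root modulo $m^2\pp$. For the converse, given $\alpha_0 \in \Delta$ with $\alpha_0^m \equiv b \pmod{m^2\pp}$, I would invoke Proposition \ref{Hensel}(i) with the proper ideal $I = \pp$, which directly produces an $\alpha \in \Delta$ with $\alpha^m = b$. The hypothesis $b \in \Delta^\times$ is part of our data; note that $\alpha_0$ is then automatically a unit, since reducing the congruence modulo $\pp$ and using $b \notin \pp$ forces $\alpha_0 \notin \pp$.

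For part (ii), suppose $\zeta_1, \zeta_2 \in \mu_m(K)$ have the same image in $(\Delta/m\pp)^\times$. Forming $\zeta = \zeta_1\zeta_2^{-1} \in \mu_m(K)$ reduces the problem to showing that $\zeta \equiv 1 \pmod{m\pp}$ implies $\zeta = 1$. Writing $\zeta = 1 + m\pi$ with $\pi \in \pp$, I would expand the equation $(1+m\pi)^m = 1$ by the binomial theorem and factor out $m^2\pi$ to obtain
\[
0 = m^2\pi\left(1 + \sum_{k=2}^{m}\binom{m}{k}m^{k-2}\pi^{k-1}\right).
\]
Each summand in the bracket lies in $\pp$ (since $\pi^{k-1} \in \pp$ for $k \geq 2$, and the integer coefficients lie in $\Delta$), so the bracketed factor differs from $1$ by an element of $\pp$ and is thus a unit in $\Delta$. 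Because $K$ has characteristic zero, $m^2 \neq 0$ in $K$, and we conclude $\pi = 0$, hence $\zeta = 1$.

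There is no real obstacle here; both statements reduce to routine bookkeeping. The only choices that matter are the selection $I = \pp$ in (i) and the algebraic rearrangement in (ii) that isolates a non-archimedean unit multiplying $m^2\pi$. Both steps rely only on the standard property that rational integers lie in $\Delta$ and that $K$ has characteristic zero.
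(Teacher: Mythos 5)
Your proof is correct. For part (i), taking $I=\pp$ in Proposition \ref{Hensel}(i) is exactly the intended reading, and your forward direction is immediate. For part (ii) you depart from the route the paper is implicitly suggesting: the quicker path through Proposition \ref{Hensel}(i) is to apply the \emph{uniqueness} clause with $\alpha_0 = b = 1$ and $I = \pp$, noting that both $1$ and any $\zeta \in \mu_m(K)$ with $\zeta \equiv 1 \Mod{m\pp}$ solve $\alpha^m = 1$ with $\alpha \equiv 1 \Mod{m\pp}$, whence $\zeta = 1$ by uniqueness. Your binomial expansion, factoring out $m^2\pi$ and observing that the cofactor is a unit (being $1$ plus an element of $\pp$), is a self-contained alternative that avoids invoking the Hensel uniqueness statement; it relies only on $\Z \subset \Delta$, the ultrametric inequality, and characteristic zero. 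Both arguments are sound; yours is more elementary, while the uniqueness route better explains why the corollary is placed immediately after Proposition \ref{Hensel}.
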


\begin{define} \label{uzeta} Write $\ol{\Delta_\pi}$ for the image of $\Delta_\pi$ under the quotient map $\Delta \to \Delta/m \pp$.  It is clear that $\mu_m(K)$ acts on $\ol{\Delta_\pi}$; take a system of representatives for this action, and write $U_1$ for its preimage in $\Delta_\pi$.  For $\zeta \in \mu_m(K)$, let $U_\zeta=\zeta \cdot U_1$. \end{define}

\begin{prop} \label{decomp2} Each $U_\zeta$ is open and closed in $K$.  Moreover we have a disjoint union
\[ \Delta_\pi= \bigsqcup_{\zeta \in \mu_m(K)} U_\zeta. \]
If $u \in U_\zeta$ and $u-v \in m \pp$, then $v \in U_\zeta$.
\end{prop}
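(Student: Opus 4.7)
The plan is to exploit the $\mu_m(K)$-equivariance of the reduction map $\Delta_\pi \to \ol{\Delta_\pi} \subset \Delta/m\pp$ together with the injectivity of $\mu_m(K) \hookrightarrow (\Delta/m\pp)^\times$ supplied by Corollary \ref{Hensel2}(ii). By construction, $U_1$ is the preimage of a complete transversal for the $\mu_m(K)$-action on $\ol{\Delta_\pi}$, so heuristically $\Delta_\pi$ breaks up into the $\mu_m(K)$-translates of $U_1$; injectivity of the reduction map on roots of unity is what forces the translates to actually be \emph{disjoint} rather than just cover.

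For the topological claim, I would first observe that $m\pp$ is a clopen additive subgroup of $\Delta$ (it contains the ball $\{x: |x|<|m|\}$ and every coset $a+m\pp$ is clopen), and that $\Delta_\pi$ is itself clopen in $K$ as a difference of closed balls. Since $U_1$ is by definition the intersection of $\Delta_\pi$ with a union of cosets of $m\pp$, it is clopen in $K$. Each $\zeta \in \mu_m(K)$ is a unit in $\Delta$, so multiplication by $\zeta$ is a homeomorphism of $K$, and therefore $U_\zeta = \zeta U_1$ is clopen as well.

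For the covering and disjointness: given $u \in \Delta_\pi$, its class $\bar u$ sits in some $\mu_m(K)$-orbit of $\ol{\Delta_\pi}$, so $\bar u = \zeta \bar v$ for a unique representative $\bar v$ in the transversal. Since $|\zeta|=1$, we have $\zeta^{-1}u \in \Delta_\pi$ with $\overline{\zeta^{-1}u} = \bar v$, which places $\zeta^{-1}u$ in $U_1$ and thus $u \in U_\zeta$. For disjointness, suppose $u = \zeta u_1 = \zeta' u_1'$ with $u_1, u_1' \in U_1$. Then $\bar u_1$ and $\bar u_1'$ lie in the same $\mu_m(K)$-orbit and both in the transversal, forcing $\bar u_1 = \bar u_1'$; then $\zeta \bar u_1 = \zeta' \bar u_1$, and cancelling the unit $\bar u_1$ in $(\Delta/m\pp)^\times$ gives $\zeta \equiv \zeta' \pmod{m\pp}$, hence $\zeta = \zeta'$ by Corollary \ref{Hensel2}(ii).

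For the final statement, if $u = \zeta u_1 \in U_\zeta$ and $u - v \in m\pp$, then $\zeta^{-1}v - u_1 = \zeta^{-1}(v - u) \in m\pp$ (using $|\zeta^{-1}|=1$), so $\zeta^{-1}v$ and $u_1$ lie in the same coset of $m\pp$; since $U_1$ is a union of such cosets intersected with $\Delta_\pi$, this places $\zeta^{-1}v$ in $U_1$ and hence $v$ in $U_\zeta$. The main obstacle I anticipate is book-keeping the interaction between the multiplicative sector $\Delta_\pi$ and the additive layer $m\pp$: one needs $\bar u_1$ to be a genuine unit in $\Delta/m\pp$ for the cancellation in the disjointness step, and one needs $v$ to actually lie back in $\Delta_\pi$ for the last statement. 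Both reductions boil down to comparing $|\pi|$ with $|m|$ and exploiting the non-archimedean triangle inequality, which I expect to be the only slightly delicate point in an otherwise purely formal argument.
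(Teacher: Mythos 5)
Your argument follows the same route as the paper's (which for the disjointness simply cites Corollary \ref{Hensel2}(ii), and treats the rest as immediate from the discreteness of $\Delta/m\pp$ and the construction of $U_1$), but you have spelled out the steps and — more usefully — put your finger on exactly where the terse proof is leaning on something unstated.

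The point you flag is not merely bookkeeping; it is a genuine hypothesis. In the disjointness step you pass from $\bar\zeta\,\bar u_1=\bar\zeta'\,\bar u_1$ to $\bar\zeta=\bar\zeta'$ by cancelling $\bar u_1$, but an element $u_1\in\Delta_\pi$ only satisfies $|\pi|<|u_1|\leq 1$, and $\bar u_1$ is a unit of $\Delta/m\pp$ precisely when $|u_1|=1$. If the value group of $K$ is dense (e.g.\ $K=\Cp$) and $|\pi|$ is small enough, then $\Delta_\pi$ contains $u_1$ with $|u_1|<1$; choosing a nontrivial $\zeta_0\in\mu_m(K)$ with $|\zeta_0-1|\,|u_1|<|m|$ (possible since $|\zeta_0-1|$ can be as small as $|m|$), one gets $\zeta_0\bar u_1=\bar u_1$, and then every preimage of $\bar u_1$ lies in both $U_1$ and $U_{\zeta_0}$, so the union is not disjoint. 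The same goes for the final statement: your argument shows $\zeta^{-1}v$ lies in the right coset of $m\pp$, but you still need $v\in\Delta_\pi$, i.e.\ $|v|>|\pi|$; if $|\pi|<|m|$ one can have $u\in\Delta_\pi$ with $|u|<|m|$ and take $v=0$, which satisfies $u-v\in m\pp$ but is not in $\Delta_\pi$. Both difficulties disappear precisely when $\Delta_\pi\subseteq\Delta^\times$, i.e.\ when $K$ is discretely valued and $\pi$ is a uniformizer (in particular $K=\Qp$, $\pi=p$, the setting the paper eventually specializes to); alternatively one could require $|\pi|\cdot\min_{1\neq\zeta\in\mu_m(K)}|\zeta-1|\geq|m|$. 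So your worry is well founded: as stated, for general $K$ and $\pi\in\pp$ the proposition needs an extra hypothesis, and the paper's proof glosses over the same point. I would make the assumption explicit rather than leaving it as a flagged concern.
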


\begin{proof}  The first statement follows because the quotient topology on the ring $\Delta/m \pp$ is discrete.
The second follows from Corollary \ref{Hensel2}(ii).  The third statement is obvious from the construction of $U_\zeta$.
\end{proof}

\begin{define} For $\zeta \in \mu_m(K)$, let $K_\zeta= <\pi> \times U_\zeta$. \end{define}

The sets $K_\zeta$ make good fundamental domains for the $m$th power map, by the following proposition:

\begin{prop} \label{pedagogical} Each $K_\zeta$ is open and closed in $K^\times$.  We have a disjoint union
\[ K^\times= \bigsqcup_{\zeta \in \mu_m(K)} K_\zeta. \]
Moreover the map $x \mapsto x^m$ is injective on each $K_\zeta$. \end{prop}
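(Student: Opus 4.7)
The plan is to reduce the three assertions to the decomposition results already established. By Proposition~\ref{decomp1}, multiplication is a homeomorphism $\langle \pi \rangle \times \Delta_\pi \isom K^\times$, and under this identification $K_\zeta$ corresponds to $\langle \pi \rangle \times U_\zeta$. Since $\langle \pi \rangle$ is discrete and, by Proposition~\ref{decomp2}, each $U_\zeta$ is open and closed in $\Delta_\pi$, it follows at once that $K_\zeta$ is open and closed in $K^\times$. The partition $K^\times = \bigsqcup_\zeta K_\zeta$ is then immediate from the disjoint union $\Delta_\pi = \bigsqcup_\zeta U_\zeta$ supplied by the same proposition.

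The remaining assertion, injectivity of $x \mapsto x^m$ on each $K_\zeta$, is the heart of the matter. Given $x, y \in K_\zeta$ with $x^m = y^m$, I would set $\omega := x/y$, so $\omega \in \mu_m(K)$, and write $x = \pi^a u$, $y = \pi^b v$ with $u, v \in U_\zeta$. The relation $x = \omega y$ reads $\pi^{a-b} u = \omega v$; taking absolute values and using that $|u|, |v| \in (|\pi|, 1]$ shows $|\pi|^{a-b} = |v|/|u|$ must lie in the open interval $(|\pi|, |\pi|^{-1})$. The only integer power of $|\pi|$ in that interval is $|\pi|^0 = 1$, forcing $a = b$, and hence $u = \omega v$.

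To conclude $\omega = 1$ I would write $u = \zeta u_0$, $v = \zeta v_0$ with $u_0, v_0 \in U_1$, so that $u_0 = \omega v_0$. Reducing modulo $m\pp$ yields $\ol{u_0} = \omega \cdot \ol{v_0}$ in $\Delta/m\pp$, placing $\ol{u_0}$ and $\ol{v_0}$ in the same $\mu_m(K)$-orbit on $\ol{\Delta_\pi}$. Since $U_1$ was constructed so that its image meets each orbit in exactly one representative, $\ol{u_0} = \ol{v_0}$, and thus $\omega$ stabilizes $\ol{v_0}$. The injectivity of $\mu_m(K) \to (\Delta/m\pp)^\times$ from Corollary~\ref{Hensel2}(ii) then gives $\omega = 1$, whence $x = y$.

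The step I expect to be the main obstacle is this last implication: passing from ``$\omega$ fixes $\ol{v_0}$'' back up to ``$\omega = 1$'' in $\mu_m(K)$. One has to verify that $(\omega-1)v_0 \in m\pp$ really forces $\omega - 1 \in m\pp$, which requires exploiting a mild unit-like property of the orbit representatives built into the choice of $U_1$, together with the Hensel-type result of Corollary~\ref{Hensel2}(ii). Once that point is navigated, the three statements of the proposition follow cleanly from Propositions~\ref{decomp1} and~\ref{decomp2}.
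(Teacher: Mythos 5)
Your proposal follows the paper's proof essentially line by line. Parts one and two are read off from Propositions~\ref{decomp1} and~\ref{decomp2} in the same way; for injectivity you make the same reductions the paper makes: force $a = b$ (the paper appeals to the uniqueness of the decomposition in Proposition~\ref{decomp1}, your explicit norm computation is a spelled-out version of the same thing), obtain $u_0 = \omega v_0$ for some $\omega \in \mu_m(K)$ after translating by $\zeta$, and reduce modulo $m\pp$ to conclude $\ol{u_0} = \ol{v_0}$ because both lie in the chosen set of orbit representatives.

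The obstacle you flag at the end is genuine, and it is present in the paper's own proof as well, which dispatches it with the parenthetical ``(using the definition of $U_1$).'' What the construction of $U_1$ actually yields is $(\omega-1)v_0 \in m\pp$, while Corollary~\ref{Hensel2}(ii) delivers $\omega = 1$ only once you know $\omega - 1 \in m\pp$. Bridging these requires $\ol{v_0}$ to be a non-zero-divisor in $\Delta/m\pp$, equivalently that $|v_0|$ is not too small compared to $|m|/|\omega-1|$. Over $\Qp$ with $\pi = p$ this is immediate because $\Delta_\pi = \Zp^\times$ consists of units. But the ``mild unit-like property'' you hope is built into $U_1$ is not in fact guaranteed by the stated construction: for a $p$-adic field with dense value group and $|\pi|$ small, $\Delta_\pi$ contains non-units $v_0$ with $|v_0|$ small enough that $(\omega-1)v_0 \in m\pp$ can hold for some nontrivial $\omega \in \mu_m(K)$. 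Closing this step would require an additional hypothesis (such as taking $|\pi|$ close enough to $1$, or working only with representatives of norm $1$). In short, your proposal matches the paper's strategy exactly and has correctly located the one point at which the paper's argument is also less than airtight.
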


\begin{proof} The first two statements follow from Propositions \ref{decomp1} and \ref{decomp2}.  For the last statement, suppose that $(\pi^nu)^m=(\pi^{n'}u')^m$, with $u,u' \in U_\zeta$.  We may assume that $\zeta=1$.  By the uniqueness of the decomposition (\ref{dec}), we may assume that $n=n'$ and so $u^m=(u')^m$.  It follows that there is an $m$th root of unity $\zeta'$ so that $u'=\zeta' u$.  Reducing modulo $m \pp$, we see that $\ol{u'}$ and $\ol{u}$ are in the same orbit of the action of $\mu_m(K)$ and it follows that $\zeta'=1$ (using the definition of $U_1$).
\end{proof}

Pick an integer $N$ large enough so that $\pi^N \in m \pp$.
Let $X_\zeta= \{ \pi^N, \pi^{N+1}, \ldots \} \cdot U_\zeta$.

\begin{prop} \label{Xprops1}
We have the following properties.

\begin{enumerate}
\item Each $X_\zeta$ is open and closed in $K^\times$.
\item The map $x \mapsto x^m$ is injective on each $X_\zeta$.
\item The union of the $X_\zeta$ is equal to $\pi^N \Delta'$.
\end{enumerate}
\end{prop}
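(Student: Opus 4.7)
The plan is to realize each $X_\zeta$ as the intersection $K_\zeta \cap \{x \in K^\times : |x| \leq |\pi|^N\}$, and then deduce the three properties by appealing to the structural results already established in Propositions \ref{decomp1}, \ref{decomp2}, and \ref{pedagogical}. To verify this identification, note that elements of $X_\zeta$ have the form $\pi^n u$ with $n \geq N$ and $u \in U_\zeta \subseteq \Delta_\pi$, so their norms satisfy $|\pi|^n \cdot |u| \leq |\pi|^N$; conversely, if $x = \pi^n u \in K_\zeta$ with $|x| \leq |\pi|^N$, then since $|u| > |\pi|$ we get $|\pi|^{n+1} < |\pi|^N$, forcing $n \geq N$.

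For (i), the annulus-type set $\{x \in K^\times : |x| \leq |\pi|^N\}$ is clopen in $K^\times$ because $K$ is non-archimedean (the inequality $|x| \leq |\pi|^N$ is equivalent to $|x| < |\pi|^{N-1}$, so the set is open, and it is also the complement of an open set). Combined with the fact that $K_\zeta$ is clopen in $K^\times$ by Proposition \ref{pedagogical}, the intersection $X_\zeta$ is clopen as well. For (ii), injectivity of $x \mapsto x^m$ on $X_\zeta$ is immediate, since $X_\zeta \subseteq K_\zeta$ and the map is already known to be injective on $K_\zeta$.

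For (iii), I would compute
\[
\bigcup_{\zeta \in \mu_m(K)} X_\zeta = \{\pi^N, \pi^{N+1}, \ldots\} \cdot \bigsqcup_{\zeta \in \mu_m(K)} U_\zeta = \{\pi^N, \pi^{N+1}, \ldots\} \cdot \Delta_\pi,
\]
where the second equality uses Proposition \ref{decomp2}. The right-hand side is precisely $\pi^N \cdot \{1, \pi, \pi^2, \ldots\} \cdot \Delta_\pi = \pi^N \Delta'$ by the decomposition of $\Delta'$ recorded in Proposition \ref{decomp1}.

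None of the three steps should present a serious obstacle; the only place where one must be a bit careful is the claim that $X_\zeta$ arises as the intersection described above, which hinges on the non-archimedean bound $|u| > |\pi|$ for $u \in U_\zeta$ preventing any cancellation between the $\pi^n$ factor and the $u$ factor. Once that identification is in hand, the proposition follows essentially by book-keeping from prior results.
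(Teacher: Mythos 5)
Your strategy—identify $X_\zeta$ as $K_\zeta \cap \{x : |x| \leq |\pi|^N\}$ and quote Propositions \ref{decomp1}, \ref{decomp2}, \ref{pedagogical}—matches the paper's intent, which is simply to carry over the argument for $K_\zeta$; the identification you give (and its verification using $|u| > |\pi|$) is correct, and parts (ii) and (iii) go through cleanly.

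However, the justification you give for openness of the ball $\{x : |x| \leq |\pi|^N\}$ does not survive for general $p$-adic $K$. You assert that $|x| \leq |\pi|^N$ is equivalent to $|x| < |\pi|^{N-1}$, which is true only when the value group of $K$ is the discrete group $|\pi|^{\mathbb{Z}}$ (e.g., $K=\Qp$ with $\pi = p$). The proposition, however, is stated for arbitrary $p$-adic fields; in particular $K=\Cp$, where the value group is dense, has plenty of elements with $|\pi|^N < |x| < |\pi|^{N-1}$, so the two conditions are genuinely different. The conclusion you want is nonetheless true and standard: in any non-archimedean field, the closed ball $\ol D(0,r)$ is open because for any $x$ with $|x| \leq r$ and any $y$ with $|y - x| < r$, the ultrametric inequality gives $|y| \leq \max(|x|, |y-x|) \leq r$, so $D(x,r) \subseteq \ol D(0,r)$. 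Replacing your parenthetical with this argument closes the gap, and the rest of your proof is fine.
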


\begin{proof}
This follows as in the previous proposition.
\end{proof}

In particular the $X_\zeta$ comprise open fundamental domains for the action of $\mu_m$ on a punctured neighborhood of $0$.

\subsection{Review of Analytic Conjugacy}

An analytic function is one defined locally by convergent power series.  Here we give a more precise definition.
Given a power series $f(x)=\sum_n a_nx^n \in K[|x|]$, its radius of convergence about $0$ is given by
\begin{equation*}\label{radius of convergence}
\rho=\left(\limsup_{n \ra \infty} \sqrt[n]{ |a_n|} \right)^{-1}.
\end{equation*}

\begin{define} \label{defineanalytic} The power series $f(x) \in K[|x|]$ is analytic (at $0$) if $\rho>0$.
\end{define}
\begin{comment}
We omit the phrase ``at $0$'' whenever the context makes this clear, for instance within this entire paper.
We may also write that $f$ is $K$-analytic if we wish to emphasize that the coefficients of $f$ lie in $K$.
\end{comment}

As mentioned in the introduction, the analytic classification of analytic maps tangent to the identity admits simple invariants.  If one starts with a power series $f$ of the form $x+ax^{m+1}+ \ldots$, with $a \neq 0$, it is a straightforward exercise to clear out the next $(m-1)$ terms by simply conjugating by a polynomial.  This reduces the problem to power series of the form $f(x)=x+ax^{m+1}+bx^{2m+1}+ O(x^{2m+2})$.  One may then conjugate by a formal power series $h$ to simply obtain the polynomial $x+ax^{m+1}+bx^{2m+1}$. The following is known (see, e.g. \cite{Rivera-Letelier}):

\begin{thm}\label{classification within $K$}
Let $K$ be a $p$-adic field, and let $f(x)=x+ax^{m+1}+bx^{2m+1}+ O(x^{2m+2})$ and $g(x)=x+a'x^{n+1}+b'x^{2n+1}+ O(x^{2n+2})$ be $K$-analytic maps, with $a$ and $a'$ nonzero. Then, there is a $K$-analytic map $h$ tangent to the identity so that $(h^{-1}\circ f\circ h)(x)=g$ if and only if $m=n$, and there exists $c \in K^\times$ so that $c^ma'=a$ and $c^{2m}b'=b$. \end{thm}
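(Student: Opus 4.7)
The argument splits into the two implications, with the bulk of the effort going into establishing convergence of a formal conjugating series in the sufficiency direction.

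For necessity, suppose $f\circ h=h\circ g$ with $h(x)=cx+h_2x^2+\cdots$ a $K$-analytic invertible germ, $c\in K^\times$. Expanding both sides as formal power series, $f(h(x))-h(x)=a\,h(x)^{m+1}+O(x^{m+2})=ac^{m+1}x^{m+1}+O(x^{m+2})$, while by Taylor expansion $h(g(x))-h(x)=h'(x)(g(x)-x)+O((g(x)-x)^2)=a'c\,x^{n+1}+O(x^{n+2})$. Comparing leading orders forces $m=n$, and the leading coefficients yield the scaling relation between $a,a',c$. Pushing the same expansion to order $x^{2m+1}$, the contributions involving $h_{m+1}$ cancel (up to the shared factor $(m+1)$), and one obtains the analogous relation for $b,b',c$.

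For sufficiency, given $c$ with $c^m a'=a$ and $c^{2m}b'=b$, first reduce to the case $a=a'$, $b=b'$ by conjugating $f$ by an appropriate linear map $L_c(x)=cx$, which is a $K$-analytic equivalence; a direct calculation shows the result agrees with $g$ through order $x^{2m+1}$. It now suffices to find an $h$ tangent to the identity with $f\circ h=h\circ g$. Writing $h(x)=x+\sum_{k\ge 2}h_k x^k$, denote by $E_k$ the coefficient equation at $x^k$. A careful bookkeeping shows that $E_k$ is linear in $h_{k-m}$ with coefficient $(2m+1-k)a$, plus a polynomial in earlier $h_j$'s and the Taylor data of $f$ and $g$. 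Non-resonant equations ($k\neq m+1,\,2m+1$) solve uniquely for $h_{k-m}$. At the two resonant orders the linear coefficient vanishes; after substituting the values $h_2=\cdots=h_m=0$ forced by $E_{m+2},\ldots,E_{2m}$, these equations reduce to the consistency conditions $a=a'$ and $b=b'$, which hold by normalization, and the free coefficients $h_{m+1},h_{2m+1}$ may be set to zero. This yields a well-defined formal series $h$.

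The main obstacle is proving that $h$ converges on some disk about $0$. Since $K$ is non-archimedean, convergence on a disk of radius $r$ is equivalent to $|h_k|r^k\to 0$, so it suffices to exhibit a geometric bound $|h_k|\le CM^k$. I would argue by induction on the recursion: each polynomial in earlier $h_j$'s appearing on the right-hand side admits a clean ultrametric estimate in terms of $\max_{j<k}|h_j|$, and the division by $(2m+1-k)a$ introduces only a polynomially controlled factor into $M$ (the $p$-adic absolute value $|2m+1-k|$ can be small, but only when $k$ is close to $2m+1$ modulo a power of $p$, which is tracked by standard $p$-adic estimates). This parallels the strategy of \cite{Rivera-Letelier}, and a variant is developed in the authors' \cite{Jenkins-Spallone}. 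Invertibility of $h$ on a perhaps smaller disk then follows from $h'(0)=1$ and the non-archimedean inverse function theorem, and composing back with $L_c$ recovers the general conjugating map, completing the proof.
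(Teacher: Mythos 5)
The paper does not prove this result: it is stated with the remark ``The following is known (see, e.g.\ \cite{Rivera-Letelier})'', so there is no in-paper argument to compare yours against. Your outline is the standard one and the formal bookkeeping is essentially correct, with small slips: your convention gives $a'=ac^{m}$ whereas the theorem's condition reads $c^{m}a'=a$ (a harmless $c\mapsto c^{-1}$ swap); the order $k=m+1$ is not a resonance in the sense of a vanishing linear coefficient $(2m+1-k)a$, it merely involves the non-existent unknown $h_{1}$ and yields the pure consistency condition $a=a'$; and $h_{2m+1}$ is not a free parameter --- it is determined by the nonresonant equation at order $3m+1$ --- so only $h_{m+1}$ is genuinely free.

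The substantive gap is the convergence step, which is where all the content lies. The assertion that the small divisor $|2m+1-k|$ contributes only a ``polynomially controlled factor'' is false: along the recursion the accumulated inverse factors behave like $|N!|_p^{-1}\sim p^{N/(p-1)}$, i.e.\ exponentially in $N$. Exponential accumulation is compatible with a positive radius of convergence (it merely shrinks the disk by a fixed factor), so the error is not automatically fatal, but it must be stated and used correctly. More seriously, you must show the induction closes --- that the nonlinear inhomogeneous terms do not outpace the intended geometric bound --- and your sketch does not engage with this. This is precisely the step that fails over $\C$: the formal normal form of a tangent-to-identity germ diverges generically, producing the Ecalle--Voronin/Martinet--Ramis moduli, so the $p$-adic convergence is a genuine theorem exploiting a non-archimedean feature (roughly, that $|n!|_p\to 0$ counteracts the growth of the recursion). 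Deferring this to \cite{Rivera-Letelier} or \cite{Jenkins-Spallone} is legitimate, but then the interesting part of the proof is cited rather than proved.
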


In particular, any such analytic map is analytically equivalent to a polynomial of the form $x+ax^{m+1}+bx^{2m+1}$.

\section{The Time-$a$ Maps $f_{m+1,a}$}\label{section:s3}

\subsection{Theory for a general $p$-adic field $K$}

We now give an analysis of the maps $f_{m+1,a}$.  These naturally arise as the solution of the initial value problem $dx/dt=(x(t))^{m+1}, x(0)=x_{0}=x$, although we do not use this fact.

\begin{define} Let $K$ be a $p$-adic field, $m \geq 1$, and $a \in \Delta'$.
 Recall the power series $\sqrt[m]{1-amx^m}$ from Proposition \ref{Hensel}. Define
\begin{equation} \label{flowing}
\begin{split}
f(x)& =f_{m+1,a}(x)\\
&=\frac{x}{\sqrt[m]{1-amx^m}}\\
&=x+ax^{m+1} + \cdots.
\end{split}
\end{equation}
\end{define}

In particular, for $m=1$ and $a=1$, the function $f_{2,1}=\frac{x}{1-x}$ is the standard M\"{o}bius transform.

We consider these maps as functions defined for $x \in \pp$.  Note that $|f(x)|=|x|$ for such $x$.

In this section, we will establish some basic dynamic properties of these maps.
Later in this paper we will show that a general power series $g(x)=x+ \cdots$ is lipeomorphically equivalent to
some $f_{m+1,a}(x)$, and so will share these properties.

Note that iteration of these maps is especially simple; for $n \in \N$ we have:
\begin{equation*}
f^{\circ n}(x)=\frac{x}{\sqrt[m]{1-namx^m}}.
\end{equation*}
We can extend this to $z \in \Zp$ by simply defining
\begin{equation*}
f^{\circ z}(x)=\frac{x}{\sqrt[m]{1-zamx^m}}.
\end{equation*}

One computes that if $z_1,z_2 \in \Zp$, then
\begin{equation*}
f^{\circ z_1}(f^{\circ z_2}(x))=f^{\circ (z_1+z_2)}(x).
\end{equation*}

The set of such $\Zp$-iterates forms what is called a $\Zp$-orbit.

\begin{define} Let $x \in K$.  The $\Zp$-orbit of $x$ under $f$ is the set
\begin{equation*}
\orb(x)=\{f^{\circ z}(x) \mid z \in \Zp \}.
\end{equation*}
\end{define}

\begin{prop} \label{phis} Fix $0 \neq x \in \pp$.  Let $\varphi: \Zp \to \orb(x)$ be given by $\varphi(z)=f^{\circ z}(x)$. Then $\varphi$ is a lipeomorphism.  The function $\Phi: \pp \times \Zp \to \pp$ given by $(x,z) \mapsto f^{\circ z}(x)$ is uniformly continuous.
\end{prop}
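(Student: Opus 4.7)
The plan is to compute differences directly with the explicit closed form $f^{\circ z}(x) = x/\sqrt[m]{1 - zamx^m}$. Fix $x \in \pp$ with $x \neq 0$, and for $z_1, z_2 \in \Zp$ set $u_i = \sqrt[m]{1 - z_i amx^m}$. Then
\[
\varphi(z_1) - \varphi(z_2) = \frac{x(u_2 - u_1)}{u_1 u_2},
\]
and the factorization $u_1^m - u_2^m = (u_1 - u_2)(u_1^{m-1} + u_1^{m-2}u_2 + \cdots + u_2^{m-1})$ combined with the direct computation $u_1^m - u_2^m = (z_2 - z_1)amx^m$ solves for $u_1 - u_2$ in terms of $z_1 - z_2$.

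By Corollary \ref{mI} (applied after shrinking $\pp$ so that $x \in m^2 \Delta$ if needed), $|u_i - 1| = |z_i a x^m| \leq |a||x|^m$. Each of the $m$ summands in $S = \sum_{j=0}^{m-1} u_1^j u_2^{m-1-j}$ is therefore congruent to $1$ modulo $|a||x|^m$, so $|S - m| \leq |a||x|^m$. Under the smallness condition $|a||x|^m < |m|$ (automatic when $p \nmid m$, and arranged by further shrinking $|x|$ otherwise), this forces $|S| = |m|$, giving $|u_1 - u_2| = |a||x|^m |z_1 - z_2|$. Since $|u_i| = 1$, I conclude
\[
|\varphi(z_1) - \varphi(z_2)| = |a||x|^{m+1}|z_1 - z_2|.
\]
The equality (not merely inequality) shows that $\varphi$ is an isometric dilation onto $\orb(x)$, hence a lipeomorphism.

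For uniform continuity of $\Phi$, I split
\[
\Phi(x_1, z_1) - \Phi(x_2, z_2) = \bigl[\Phi(x_1, z_1) - \Phi(x_1, z_2)\bigr] + \bigl[\Phi(x_1, z_2) - \Phi(x_2, z_2)\bigr].
\]
The first bracket is controlled by the lipeomorphism estimate, yielding $|a||x_1|^{m+1}|z_1 - z_2| \leq |z_1 - z_2|$ (using $|a|, |x_1| \leq 1$). For the second, I fix $z = z_2$ and run a parallel calculation in the $x$-variable: set $v_i = \sqrt[m]{1 - zamx_i^m}$, note $v_1^m - v_2^m = -zam(x_1^m - x_2^m)$, and apply the same sum-norm analysis together with the ultrametric estimate $|x_1^m - x_2^m| \leq |x_1 - x_2|\max(|x_1|,|x_2|)^{m-1}$ to obtain $|f^{\circ z}(x_1) - f^{\circ z}(x_2)| \leq |x_1 - x_2|$. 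Combining gives
\[
|\Phi(x_1, z_1) - \Phi(x_2, z_2)| \leq \max(|z_1 - z_2|, |x_1 - x_2|),
\]
so $\Phi$ is non-expanding with respect to the max metric, and hence uniformly continuous. The main technical hurdle throughout is securing $|S| = |m|$, where the Hensel-based estimate of Corollary \ref{mI} is essential; once that is in hand, the rest of the argument is an exercise in the ultrametric inequality.
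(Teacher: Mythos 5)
Your proof is correct and follows essentially the same route as the paper's, which simply records the distance formula $|\varphi(z)-\varphi(z')|=|ax^{m+1}||z-z'|$ as ``one computes'' and calls the $\Phi$ claim straightforward; you have filled in that computation via the factorization of $u_1^m-u_2^m$ and the Hensel estimate of Corollary~\ref{mI}. The paper also displays the explicit inverse $\varphi^{-1}(y)=(y^m-x^m)/(amx^my^m)$, which you replace by observing that $\varphi$ is a scaled isometry onto $\orb(x)$ --- an equivalent justification.
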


\begin{proof}
An inverse map to $\varphi$ is given by
\begin{equation} \label{express}
\varphi^{-1}(y) =\frac{y^{m}-x^{m}}{amx^{m}y^{m}}.
\end{equation}
Moreover, one computes that
\begin{equation*}
|\varphi(z)-\varphi(z')|=|ax^{m+1}||z-z'|,
\end{equation*}
which implies that $\varphi$ is a lipeomorphism.

The statement about $\Phi$ is straightforward.

\end{proof}

\begin{cor} $\orb(x)$ is the closure of the set of iterates $\{f^{\circ n}(x) \mid n \in \W \}$.
\end{cor}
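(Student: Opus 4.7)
The plan is to leverage Proposition \ref{phis}, which gives us a lipeomorphism $\varphi: \Zp \to \orb(x)$ with $\varphi(z) = f^{\circ z}(x)$. The corollary then essentially reduces to a statement purely about the topology of $\Zp$, namely that $\W$ is dense in $\Zp$, combined with the fact that $\orb(x)$ is already closed in $K$.

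First I would verify that $\W$ is dense in $\Zp$. Any $z \in \Zp$ has a canonical expansion $z = \sum_{i=0}^{\infty} c_i p^i$ with $c_i \in \{0,1,\ldots,p-1\}$, and the partial sums $z_n = \sum_{i=0}^{n} c_i p^i$ lie in $\W$ and converge to $z$. Next, since $\Zp$ is compact and $\varphi$ is continuous by Proposition \ref{phis}, the image $\orb(x) = \varphi(\Zp)$ is compact, hence closed in $K$.

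For the density statement, note that $\{f^{\circ n}(x) \mid n \in \W\} = \varphi(\W)$. Let $y = \varphi(z) \in \orb(x)$ for some $z \in \Zp$, and pick a sequence $z_n \in \W$ with $z_n \to z$. By continuity of $\varphi$ (explicitly, by the Lipschitz estimate $|\varphi(z_n) - \varphi(z)| = |ax^{m+1}||z_n - z|$ from the proof of Proposition \ref{phis}), we have $f^{\circ z_n}(x) \to y$. Thus $\varphi(\W)$ is dense in $\orb(x)$, which in turn is closed in $K$, so $\orb(x)$ is exactly the closure of $\{f^{\circ n}(x) \mid n \in \W\}$ in $K$.

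There is no real obstacle here — the whole content is packaged inside Proposition \ref{phis}. The only minor care needed is to handle the case $x = 0$ separately (then $\orb(0) = \{0\}$ trivially) or to note that the case $x \in \pp \setminus \{0\}$ is what Proposition \ref{phis} covers and is the intended content of the corollary.
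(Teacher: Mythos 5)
Your proof is correct and is exactly the argument the paper leaves implicit: the corollary is stated immediately after Proposition \ref{phis} with no separate proof, because the content is precisely what you spell out — $\varphi$ is a homeomorphism onto $\orb(x)$, $\W$ is dense in $\Zp$, and $\orb(x)$ is compact (hence closed) as the continuous image of $\Zp$.
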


For later use we record the following lemma:

\begin{lemma} \label{lateruse}
We have
\begin{equation*}
\left| f_{m+1,a}^{\circ z}(x)-x \right| = |azx^{m+1}|.
\end{equation*}
\end{lemma}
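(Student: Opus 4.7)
The plan is to exploit the explicit iteration formula for $f_{m+1,a}$ and reduce the lemma to an application of Corollary~\ref{mI}. Writing $f=f_{m+1,a}$, we have the closed form
\[
f^{\circ z}(x)=\frac{x}{\sqrt[m]{1-zamx^{m}}}
\]
for every $z\in\Zp$, as recorded just before Proposition~\ref{phis}. So the quantity of interest factors cleanly:
\[
f^{\circ z}(x)-x \;=\; x\cdot\frac{1-\sqrt[m]{1-zamx^{m}}}{\sqrt[m]{1-zamx^{m}}}.
\]

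From here the estimate splits into the norm of the numerator and the norm of the denominator. First I would apply Corollary~\ref{mI}, with the role of the coefficient played by $za$ and the role of ``$x$'' in that corollary played by our $x$ (this is legitimate because $z\in\Zp$ means $|za|\le|a|\le 1$, and we are working with $x\in\pp$ lying in the domain of convergence of $\sqrt[m]{1-zamx^{m}}$), to conclude
\[
\bigl|1-\sqrt[m]{1-zamx^{m}}\bigr|=|zax^{m}|.
\]
Second, the same corollary tells us that $\sqrt[m]{1-zamx^{m}}\equiv 1\pmod{m\,x\,\pp}$, so in particular $|\sqrt[m]{1-zamx^{m}}|=1$ by the ultrametric inequality (the deviation from $1$ has norm strictly less than $1$).

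Combining these two facts and multiplying by $|x|$ yields
\[
\bigl|f^{\circ z}(x)-x\bigr| \;=\; |x|\cdot\frac{|zax^{m}|}{1} \;=\; |a\,z\,x^{m+1}|,
\]
which is the desired identity. The only mild subtlety, and really the single step worth checking, is the verification that Corollary~\ref{mI} is applicable — i.e.\ that $zx$ lies in the range of parameters for which the $m$th-root estimate holds. Once that is in place, the lemma is immediate from the algebraic identity above.
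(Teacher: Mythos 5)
Your proof is correct and is essentially the same as the paper's, which simply states that the lemma follows immediately from Corollary~\ref{mI}; you have merely written out the short algebraic manipulation $f^{\circ z}(x)-x = x\,\bigl(1-\sqrt[m]{1-zamx^{m}}\bigr)/\sqrt[m]{1-zamx^{m}}$ and the observation that the denominator has norm $1$, both of which the paper leaves implicit. One small point of care: Corollary~\ref{mI} is stated for exponent $n\geq 2$, so the direct citation covers $m\geq 2$; for $m=1$ the $m$th root is just $1-zax$ and the identity is immediate, and it is $x$ (rather than $zx$) that must lie in $m^2\Delta$ for the corollary to apply, which is arranged by working sufficiently close to $0$.
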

\begin{proof} This follows immediately from Corollary \ref{mI}.
\end{proof}

\noindent {\bf Remark:} Of course we may also define $f^{\circ \alpha}$ for $\alpha \in K$ by the same formulas while suitably shrinking the domain.  In this way we see that the germ of $f$ has roots of all orders.

\subsection{Theory for $\Qp$}

The previous subsection elaborated the properties of the time-$a$ maps which are valid in any $p$-adic field $K$. We now refine our study of these maps within the fields $\Qp$. These results are not necessary for any of the theory of Section \ref{section:s4} or the proof of Theorem \ref{lipeomorphic conjugacy}. However, we will require this more detailed knowledge of the orbits of the time-$a$ maps when proving Theorem \ref{local holder continuity}.

\begin{prop} \label{Qp-orbits} Fix $x \in \pp$.  Then $\orb(x)=\{ y \in \pp \mid y\equiv x \Mod ax^{m+1}\}$.
\end{prop}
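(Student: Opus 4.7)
The plan is to establish the two inclusions separately. The forward inclusion is immediate from Lemma \ref{lateruse}: for any $z\in\Zp$ we have $|f^{\circ z}(x)-x| = |azx^{m+1}| \leq |ax^{m+1}|$ since $|z|\leq 1$.

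For the reverse inclusion, suppose $y\in \pp$ with $|y-x|\leq|ax^{m+1}|$. Since $|ax^m|<1$ we get $|y-x|<|x|$, so $|y|=|x|$ by the ultrametric inequality. The inverse formula (\ref{express}) in the proof of Proposition \ref{phis} shows that the only $z\in K$ with $f^{\circ z}(x)=y$ is
\[
z = \frac{y^m - x^m}{am x^m y^m},
\]
so everything comes down to verifying $|z|\leq 1$, or equivalently $|y^m - x^m|\leq |am|\,|x|^{2m}$.

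The key step is the ``mean-value''-style estimate
\[
|y^m - x^m| \leq |m|\,|x|^{m-1}\,|y-x|,
\]
which, together with the hypothesis, instantly yields $|z|\leq 1$. I would prove this by expanding $y^m - x^m = \sum_{k=1}^m \binom{m}{k}\,x^{m-k}(y-x)^k$ and using the identity $k\binom{m}{k}=m\binom{m-1}{k-1}$ to obtain the $p$-adic bound $|\binom{m}{k}|\leq|m|/|k|$. It then suffices to show that the $k\geq 2$ terms are dominated in absolute value by the $k=1$ term $mx^{m-1}(y-x)$, so that the non-archimedean triangle inequality delivers the estimate.

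The main obstacle, and the only place where $\Qp$-specific arithmetic enters, is this domination, which reduces to the inequality $|ax^m|^{k-1}\leq |k|$ for all $k\geq 2$. Since $x\in\pp$ forces $|x|\leq p^{-1}$, one has $|ax^m|\leq p^{-m}$, while $|k|=p^{-\ord_p(k)}$, so the inequality becomes the elementary arithmetic fact $m(k-1)\geq \ord_p(k)$; this holds because $\ord_p(k)\leq \log_p k\leq k-1$ for $k\geq 2$.
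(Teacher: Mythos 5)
Your proof is correct, and the overall strategy matches the paper's: the forward inclusion via the norm of $f^{\circ z}(x)-x$, and the reverse inclusion by plugging $y$ into the explicit inverse formula~(\ref{express}) and verifying $|z|\leq 1$. The one place where you genuinely diverge is in how you bound $|y^m-x^m|$. You expand binomially, $y^m-x^m=\sum_{k=1}^m\binom{m}{k}x^{m-k}(y-x)^k$, and use the valuation bound $|\binom{m}{k}|\leq |m|/|k|$ coming from $k\binom{m}{k}=m\binom{m-1}{k-1}$, reducing the domination of the higher-order terms to the arithmetic fact $m(k-1)\geq\ord_p(k)$. The paper instead factors $y^m-x^m=(y-x)(y^{m-1}+y^{m-2}x+\cdots+x^{m-1})$ and shows each summand $y^jx^{m-1-j}$ is $x^{m-1}$ times a unit of the form $1+x_i$ with $|x_i|\leq|ax^m|$, so the cofactor is congruent to $mx^{m-1}$ modulo something small; the arithmetic fact that makes this close up is $|ax^m|\leq|m|$, i.e.\ $\ord_p(m)\leq m$. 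Both decompositions rest on the same kind of light $p$-adic arithmetic; the paper's factorization avoids binomial coefficients entirely and is a bit shorter, while yours makes the mean-value estimate $|y^m-x^m|\leq|m|\,|x|^{m-1}|y-x|$ explicit, which is a clean intermediate statement. Either route is fine.
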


\begin{proof}
Let $y= f_{m+1,a}^{\circ z}(x)$.  By Corollary \ref{mI}, we have $\sqrt[m]{1-zamx^m} \equiv 1-zax^m \Mod ax^{m+1}$, and in particular, $y \equiv x \Mod ax^{m+1}$.  Conversely, suppose that $y \equiv x \Mod a x^{m+1}$.  We claim that the expression
\begin{equation*}
z =\frac{y^m-x^m}{amx^{m}y^{m}}
\end{equation*}
is then in $\Zp$, so that $\varphi(z)=y$.  This claim is equivalent to the claim that $amx^my^m|(y^m-x^m)$.
Since $ax^{m+1}|(y-x)$, we are reduced to proving $mx^{m-1}|(y^{m-1}+ \cdots+ x^{m-1})$.  But this follows because each of the term in the sum is equal to $x^{m-1}(1+x_i)$ for some $x_i$ satisfying $ax^m|x_i$.
\end{proof}

The $\Z_p$-orbits are closed in $\Q_p$, and partition each circle $C(0,r)$ in $\pp$. We show here that if $r=p^{-i}$ for $i\geq 1$, then the map $f_{m+1,a}(x)$ will have exactly $p^{im+\ord(a)-1}(p-1)$ orbits in the circle $C(0, r)$. Figure 2 depicts this for $f_{2,1}(x)=\frac{x}{1-x}$ and $p=2$.

\begin{figure}[h]
\epsfig{file=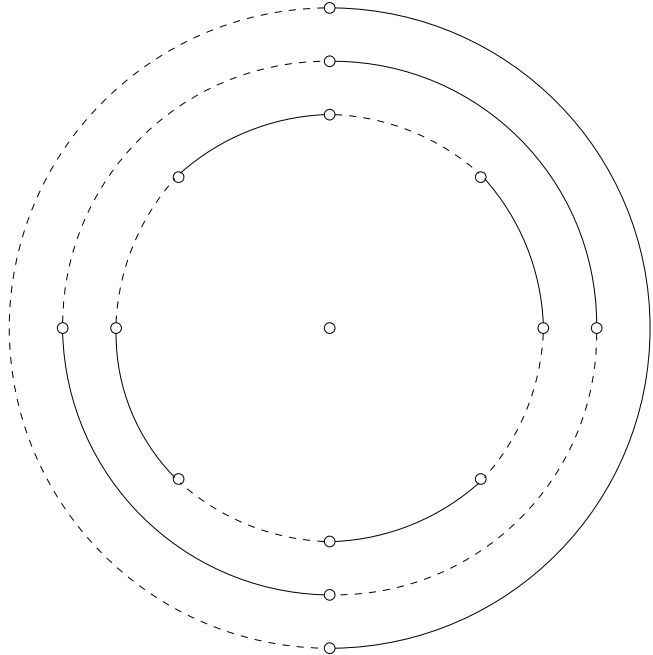,scale=.71}
\caption{$\Z_2$-orbits for the map $f_{2,1}$ within $\mathbb{Q}_{2}$. The image depicts three circular ``shells'', corresponding to radii $2^{-1}$, $2^{-2}$ and $2^{-3}$.}
\end{figure}

\begin{prop}\label{counting orbits}  Suppose that $\ord(a)=k$.
Then for $i \geq 1$, there are $p^{im+k-1}(p-1)$ orbits of the map $f_{m+1,a}$ in the circle $C(0, p^{-i})=p^i \Zp^\times$.
\end{prop}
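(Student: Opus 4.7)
The plan is to combine Proposition \ref{Qp-orbits} (describing $\Zp$-orbits as congruence classes) with a straightforward counting argument based on partitioning a circle into closed disks of a smaller radius.

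First, I would fix $x \in C(0, p^{-i})$. Since $|ax^{m+1}| = p^{-k-i(m+1)}$, Proposition \ref{Qp-orbits} identifies $\orb(x)$ with the closed ultrametric disk
\begin{equation*}
\orb(x) = \ol{D}\bigl(x,\, p^{-k-i(m+1)}\bigr).
\end{equation*}
Since $k \geq 0$ and $m \geq 1$ force $p^{-k-i(m+1)} < p^{-i}$, the ultrametric inequality guarantees $|y| = |x| = p^{-i}$ for every $y$ in this disk. Hence $\orb(x) \subset C(0, p^{-i})$, and the orbits give a partition of the circle into disks of common radius $\rho := p^{-k-i(m+1)}$.

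Next, I would count these disks. Write $C(0, p^{-i}) = \ol{D}(0, p^{-i}) \setminus \ol{D}(0, p^{-i-1})$. A closed disk in $\Qp$ of radius $p^{-j}$ partitions into $p^{j-\ell}$ disjoint closed disks of radius $p^{-\ell}$ whenever $\ell \geq j$. Applying this:
\begin{equation*}
\#\{\text{disks of radius } \rho \text{ in } \ol{D}(0, p^{-i})\} = \frac{p^{-i}}{\rho} = p^{k+im},
\end{equation*}
\begin{equation*}
\#\{\text{disks of radius } \rho \text{ in } \ol{D}(0, p^{-i-1})\} = \frac{p^{-i-1}}{\rho} = p^{k+im-1}.
\end{equation*}
Subtracting, the number of orbits in the circle is
\begin{equation*}
p^{k+im} - p^{k+im-1} = p^{im+k-1}(p-1),
\end{equation*}
as desired.

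There is no real obstacle here: the only subtle point is ensuring that the orbit disks sit inside the circle (so that no orbit straddles two different shells), which is immediate from the ultrametric inequality as soon as one checks $|ax^{m+1}| < |x|$. The counting step is standard for the tree structure of $\Qp$.
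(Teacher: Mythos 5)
Your proof is correct and is essentially the same argument as the paper's, just phrased geometrically rather than in terms of digits. The paper also invokes Proposition \ref{Qp-orbits} to identify an orbit with the set of $y$ congruent to $x$ modulo $ax^{m+1}$, and then counts by writing out the $p$-adic expansion $x = cp^i + \sum_{l \geq i+1} c_l p^l$ and observing that the coefficients $c \in \{1,\dots,p-1\}$ and $c_{i+1},\dots,c_{(m+1)i+k-1} \in \{0,\dots,p-1\}$ are free, giving $(p-1)p^{im+k-1}$ orbits. Your version packages the free digits as a count of closed subdisks of radius $p^{-k-(m+1)i}$ inside $\ol{D}(0,p^{-i})$ minus those inside $\ol{D}(0,p^{-i-1})$; this is the same bookkeeping in different notation, with the subtraction replacing the restriction $c\neq 0$. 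Both are fine; your phrasing arguably makes the key observation (that each orbit is an ultrametric disk lying entirely within one circle) a bit more explicit.
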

\begin{proof}

Let $x\in \pp$.  If $|x|=p^{-i}$ for some (sufficiently large) $i\in {\N}$, then we can write $x=c p^{i}+\sum _{l\geq
i+1} c_l p^{l}$, where $c\in \{ 1, 2, \ldots, p-1\} $, and $c_{l}\in \{0, 1, \ldots, p-1\} $ for all $l$. The condition $y\equiv x\Mod ax^{m+1}$, this means that $y= c p^{i}+\sum _{i< l < i(m+1)+k} c_{l}p^{l} +O(ax^{m+1})$. Thus, by altering any of the coefficients $c, c_{i+1},\ldots ,c_{i(m+1)+k-1}$, we obtain a new orbit.
\end{proof}

Since there are but finitely many $\Z_p$-orbits in each circle, it now follows that each $\Z_p$-orbit is open, since each is the complement in an open circle of finitely many closed orbits.

\section{Lipschitz conjugacy between $f$ and $f_{m+1,a}$}\label{section:s4}

This section is devoted to the proof of Theorem \ref{lipeomorphic conjugacy}, an analogue of the result of Camacho and Shcherbakov. The focus here is not only on the construction of a homeomorphism conjugating $f$ and $f_{m+1,a}$, but also on its behavior.

\subsection{Strategy} \label{Gintro}

Before getting involved with the minutiae of the proof, we would like to sketch out the main ideas as a courtesy to the reader.  The goal is to find a lipeomorphism $h$ between a given function $f(x)=x+ax^{m+1}+ \cdots$ and $f_{m+1,a}$.  To do this directly is to solve the functional equation
\begin{equation} \label{recursive}
 h(f_{m+1,a}(x))=f(h(x)).
\end{equation}

 A first approach could be to pick orbit representatives of the $\Zp$-orbits $\orb$ for $f_{m+1,a}$, and attempt to define $h$ recursively through Equation (\ref{recursive}).  Thus for $z \in \Zp$ one might write
 \begin{equation*}
 h(f_{m+1,a}^{\circ z}(x))=f^{\circ z}(h(x)).
\end{equation*}
There are some problems with this approach.  By the previous section we know that the function $n \mapsto f_{m+1,a}^{\circ n}(x)$, defined for $n \in \N$, extends continuously to $z \in \Zp$.  But we do not know this for $f$ a priori.  (This will follow, for $x$ sufficiently close to $0$, after we have proven Theorem \ref{lipeomorphic conjugacy}.)  Another problem is that it is unclear with this strategy how one plans to define $h$ to be continuous as one passes through different $\Zp$-orbits, which are only open in the case of $K=\Qp$.

To bypass these difficulties we adapt a construction of Shcherbakov, who introduced this method in the context of $K=\C$.  Although we appreciate the intelligible behavior of the flows $f_{m+1,a}$, we can do even better by ``conjugating'' by the map $\A_{m+1}(x)=-\dfrac{1}{mx^m}$.  This transforms $f_{m+1,a}$ near $0$ to the simple translation $t_a(\eta)= \eta +a$ near infinity.  Note that the $\Zp$-orbits here are simply cosets $\eta_0+\Zp a$ of $\Zp a$.  Meanwhile it transforms the function $f$ to a map $\tilde{f}(\eta)=\eta +a + G(\eta)$ for a manageable function $G$.  We then apply the previously mentioned approach to the functional equation
\begin{equation*}
\tilde h(\eta+a)=\tilde h(\eta)+a+G(\tilde h(\eta))
\end{equation*}
to define a lipeomorphism $\tilde h$ at infinity.
We discover, through a pleasant application of the division algorithm, that this equation iterates well to give a Lipschitz map on $\Zp$-orbits.  Moreover there is a natural way to define $\tilde h$ to be continuous through different $\Zp$-orbits for general $p$-adic fields $K$; we made the delicate choices of basis $(e_\lambda)$ in Section \ref{Additive elements} to this end.

Coming back from infinity has its troubles, for the map $\A_{m+1}$ provides neither an injection nor a surjection from a neighborhood of $0$ to a neighborhood of infinity, if $m>1$.  This is not insurmountable: one divides up a punctured neighborhood of $0$ into open fundamental domains $X_\zeta$ for the action of the roots of unity $\mu_m$, which were introduced in Section \ref{decomposition}.  Then the restriction of $\A_{m+1}$ to each $X_\zeta$ is a diffeomorphism onto its image.  This image of $\A_{m+1}$ is certainly not a full neighborhood of infinity; we call such things ``$m$-neighborhoods of infinity'', and study them in the next section.  With this set-up we can pull back the lipeomorphisms at infinity to lipeomorphisms $h_\zeta$ on each sector $X_\zeta$.  Their only common limit point is $0$, and the maps easily paste together to yield the desired lipeomorphism $h$.

\subsection{The $m$-neighborhoods of infinity} \label{mnbhd}

In this section we give the details about the domain and target of $\A_{m+1}$ that we want.
Fix a nonzero element $\pi \in \pp$.

\begin{define} A subset $Y \subset K$ is an $m$-neighborhood of infinity if $Y=\A_{m+1}(\pi^N \Delta')$ for some $N \geq 0$ so that $\pi^{Nm} \in m \pp$. \end{define}

The assumption on $N$ is to ensure the following technical lemma.

\begin{lemma} \label{mlemma} Let $Y$ be an $m$-neighborhood of infinity, let $a \in \Delta$.  Then
$Y+ \Delta a=Y$.
\end{lemma}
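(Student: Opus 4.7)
Since $0 \in \Delta a$, the inclusion $Y \subseteq Y + \Delta a$ is automatic, so I only need $Y + \Delta a \subseteq Y$. Fix $y = \A_{m+1}(x) \in Y$ with $x \in \pi^N\Delta'$ and $b \in \Delta a$. The strategy is to exhibit $x' \in \pi^N\Delta'$ with $\A_{m+1}(x') = y+b$ by setting $x' = x/\alpha$, where $\alpha$ is a carefully chosen $m$th root of $1 + b/y$ supplied by Hensel's lemma.

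First I would translate the hypothesis $\pi^{Nm} \in m\pp$ as $|\pi|^{Nm} < |m|$ and use it to estimate sizes. From $x \in \pi^N\Delta'$ one has
\[
|y| \;=\; \frac{1}{|m||x|^m} \;\geq\; \frac{1}{|m||\pi|^{Nm}} \;>\; \frac{1}{|m|^2} \;\geq\; 1 \;\geq\; |b|,
\]
so $|y+b| = |y|$ by the ultrametric inequality, and $c := b/y$ satisfies $|c| \leq |a||m||\pi|^{Nm} < |m|^2$. Rewriting the desired equation $\A_{m+1}(x') = y+b$ as $x'^m = x^m/(1+c)$, the problem reduces to extracting an $m$th root of $1+c$ in $K$.

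Next I would apply Proposition \ref{Hensel}(i) with $\alpha_0 = 1$ and the proper ideal $I = (c/m^2)\Delta$ (the case $c=0$ being trivial). Since $|c|/|m|^2 < 1$, the ideal $I$ is indeed proper in $\Delta$, and $\alpha_0^m - (1+c) = -c = m^2(-c/m^2) \in m^2 I$, so the proposition produces $\alpha \in \Delta$ with $\alpha^m = 1 + c$ and $\alpha \equiv 1 \pmod{mI}$; in particular $|\alpha - 1| < 1$, so $|\alpha| = 1$. Setting $x' = x/\alpha$ gives $|x'| = |x| \in (0, |\pi|^N]$, hence $x' \in \pi^N\Delta'$, and a one-line check confirms $\A_{m+1}(x') = -\alpha^m/(mx^m) = (1+c)y = y + b$.

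The main obstacle is the careful bookkeeping around the factor of $|m|^2$: the hypothesis $\pi^{Nm} \in m\pp$ is calibrated exactly so that $|c| < |m|^2$, which is what Hensel's lemma needs to deliver the $m$th root uniformly as $x$ shrinks within $\pi^N\Delta'$. Once that bound is in hand, the remaining manipulations are mechanical non-archimedean arithmetic.
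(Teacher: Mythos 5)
Your proof is correct and follows essentially the same approach as the paper's: write a generic element of $Y$ as $\A_{m+1}(x)$, observe that adding an element of $\Delta a$ multiplies it by $1+c$ with $|c|$ small (indeed $|c|<|m|^2$, thanks to the hypothesis $\pi^{Nm}\in m\pp$), and use Proposition \ref{Hensel}(i) to extract an $m$th root. Your version is slightly cleaner in that it normalizes to extract a root of $1+c$ (a unit) directly rather than a root of $\alpha^{-m} - m\pi^{im}\beta a$, so it handles the non-discretely-valued case with no fuss; the paper's proof is terser but the underlying idea is identical.
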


\begin{proof}     A typical element of $Y$ may be written as $y=-\frac{1}{m}(\pi^i \alpha)^{-m}$, where $\alpha \in \Delta^\times$ and $i \geq L$.  Let $\beta \in \Delta$.
Then we have
\begin{equation*}
y+\beta a=-\frac{1}{m}\pi^{-Lm}(\alpha^{-m}+(-m)\pi^{Lm}\beta a).
\end{equation*}
This element is in $Y$ by Proposition \ref{Hensel} (i).
\end{proof}

Recall from Section \ref{decomposition} that we have defined a fundamental domain $\Delta_\pi$ for the action of $\pi$ on $K^\times$, and a fundamental domain $U_\zeta$ for the action of $\mu_m$ on $\Delta_\pi$.
For $\zeta \in \mu_m(K)$ and a large $N$, recall that we have open sets $X_\zeta=U_\zeta \times \{\pi^N,\pi^{N+1}, \ldots \}$.  Their union is $\pi^N \Delta'$.

Then the restriction of $\A_{m+1}$ to $X_\zeta$ gives a diffeomorphism from $X_\zeta$ onto its image $Y$.  (Injectivity is proven in Proposition \ref{Xprops1}.)

\begin{define} We may therefore define
\begin{equation*}
(\A_{m+1}^{-1})_\zeta: Y \isom X_\zeta
\end{equation*}
to be its inverse.
\end{define}

The reader should enjoy checking that $\A_{m+1} \circ f_{m+1,a} \circ (\A_{m+1}^{-1})_\zeta=t_a$ for all $\zeta$.

\begin{prop} \label{Left4Adrian} Let $f(x)=x+ax^{m+1}+ \cdots$, let $Y$ be an $m$-neighborhood of infinity, and let $\delta > 0$.  Put $\tilde f=\A_{m+1} \circ f \circ (\A_{m+1}^{-1})_\zeta: Y \to Y$.  Then there is a constant $C>0$ and an $m$-neighborhood of infinity $Y' \subseteq Y$ so that $\tilde f(\eta)= \eta+a+ G(\eta)$, where $G: Y' \to K$ is a function satisfying the estimates
\begin{equation} \label{G-estimate}
|G(\eta)| \leq \frac{C}{|\eta|}.
\end{equation}
For all $\eta_1, \eta_2 \in Y'$, we have
\begin{equation} \label{G-Lip-estimate}
|G(\eta_1)-G(\eta_2)| \leq  \delta |\eta_1 -\eta_2|.
\end{equation}
\end{prop}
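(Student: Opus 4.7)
My plan is a direct computation: substitute $x = (\A_{m+1}^{-1})_\zeta(\eta)$ into the formula for $\tilde f$, expand, and identify the leading terms with their errors.

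First I would parametrize by $x$, so that $\eta = -\frac{1}{mx^m}$ and $x^m = -\frac{1}{m\eta}$. Then
\[
  \tilde f(\eta) = \A_{m+1}(f(x)) = -\frac{1}{m\,f(x)^m} = \eta\cdot \frac{x^m}{f(x)^m}.
\]
Using Theorem \ref{classification within $K$} I would first pre-conjugate $f$ by an analytic diffeomorphism---automatically a lipeomorphism on a small disk---so that we may assume $f(x) = x + ax^{m+1} + bx^{2m+1} + O(x^{2m+2})$, i.e.\ $f(x) = x(1+u(x))$ with $u(x) = ax^m + O(x^{2m})$. This pre-reduction is essential for obtaining the $|\eta|^{-1}$ decay rate: intermediate monomial terms $c_j x^{m+j}$ with $1 \leq j < m$ would contribute summands of size $|\eta|^{-j/m}$ to $G$, which for $m>1$ vastly exceed $|\eta|^{-1}$.

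Second, on a disk where $|u(x)|$ is small I would apply Proposition \ref{Hensel}(ii) (or just invert the binomial series formally) to get
\[
  \frac{1}{(1+u)^m} = 1 - m\,u(x) + O(u(x)^2) = 1 - amx^m + O(x^{2m}),
\]
whence
\[
  \tilde f(\eta) = \eta - am\eta x^m + O(\eta x^{2m}) = \eta + a + O(1/\eta),
\]
using $-am\eta x^m = -am\eta \cdot (-\tfrac{1}{m\eta}) = a$ and $|\eta x^{2m}| = |\eta|\cdot|m\eta|^{-2} = |m|^{-2}|\eta|^{-1}$. Setting $G(\eta) = \tilde f(\eta) - \eta - a$ then yields $|G(\eta)| \leq C/|\eta|$ on any sufficiently small $m$-neighborhood $Y' \subseteq Y$.

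Third, for the Lipschitz estimate I would differentiate: since $x^m = -\frac{1}{m\eta}$, implicit differentiation gives $\frac{dx}{d\eta} = -\frac{x}{m\eta}$, and the chain rule produces
\[
  \tilde f'(\eta) = \frac{f'(x)}{f(x)^{m+1}}\cdot\Bigl(-\frac{x}{m\eta}\Bigr) = -\frac{1}{m\eta x^m}\bigl(1+O(x^m)\bigr) = 1 + O(1/\eta),
\]
so $G'(\eta) = O(1/\eta)$. Shrinking $Y'$ further so that $|G'|<\delta$ throughout, the non-archimedean mean value inequality on each maximal analytic disk $D\subseteq Y'$ gives $|G(\eta_1)-G(\eta_2)| \leq \delta|\eta_1-\eta_2|$. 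For a pair $\eta_1,\eta_2$ lying in distinct such disks, the ultrametric forces $|\eta_1-\eta_2| \geq \min(|\eta_1|,|\eta_2|) \geq R$, where $R = \min_{\eta\in Y'}|\eta|$; the pointwise bound then already yields $|G(\eta_1)-G(\eta_2)| \leq \max_i|G(\eta_i)| \leq C/R \leq \delta|\eta_1-\eta_2|$ provided $R$ is chosen large enough that $C \leq \delta R^2$.

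The main obstacle, such as it is, is the careful bookkeeping of the $p$-adic expansions and ensuring the estimates are uniform over $Y'$. The ultrametric structure makes the ``different-disk'' case of the Lipschitz bound almost automatic, so the real work lies in the single expansion establishing $\tilde f(\eta) = \eta + a + O(1/\eta)$ with the correct sign and coefficient of $a$, and in verifying the pre-reduction step does not disturb the structure of the problem near infinity.
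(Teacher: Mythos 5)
Your approach matches the paper's: transport $f$ to $\tilde{f}(\eta) = \eta + a + G(\eta)$ via $\A_{m+1}$, then read off the estimates on $G$ from the power-series expansion of $1/f(x)^m$ with $x^m = -1/(m\eta)$. Two points deserve comment.

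First, you are right to insist on the pre-normalization. For $m>1$, an intermediate term $cx^{m+j}$ with $1\le j<m$ contributes $O(|\eta|^{-j/m})$ to $G$, which ruins (\ref{G-estimate}). The paper's written statement omits this hypothesis, but its proof opens by writing $f(x)=x+ax^{m+1}+F(x)$ with $F=O(x^{2m+1})$, and in the application (Theorem \ref{curbed}) the normalization is performed beforehand via Theorem \ref{classification within $K$}. So your explicit invocation of that theorem is faithful to what the paper actually does, and your explanation of \emph{why} it is needed is a useful addition.

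Second, the passage from $|G'|<\delta$ to the Lipschitz bound via a ``non-archimedean mean value inequality'' is a genuine gap: that inequality is false in general. For $g(x)=x^p$ on $\ol{D}(0,1)\subset\Qp$ one has $\sup|g'|=|p|=1/p$, yet $|g(1)-g(0)|=|1-0|$, so the Lipschitz constant of an analytic function can strictly exceed the sup of its derivative. (The paper shares this soft spot, asserting that $|G'|=O(|\eta|^{-2})$ ``will imply the proposition.'') The correct route is to control the Lipschitz constant by the sup norm rather than the derivative: for an analytic function $G=\sum_n a_n(\eta-\eta_0)^n$ on $D(\eta_0,r)$, the identity $X^n-Y^n=(X-Y)(X^{n-1}+\cdots+Y^{n-1})$ together with $|X^{n-1}+\cdots+Y^{n-1}|\le r^{n-1}$ gives Lipschitz constant at most $\sup_{n\ge 1}|a_n|r^{n-1}\le r^{-1}\sup_{\eta}|G(\eta)|$. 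Since $\sup|G|=O(1/|\eta_0|)$ on such a disk and $r$ can be taken comparable to $|\eta_0|$, the Lipschitz constant is $O(|\eta_0|^{-2})$, which is more than enough. Your handling of the different-disks case by the ultrametric and the pointwise bound is correct and needs no change; only the within-disk step should be rephrased in terms of this Schwarz-type estimate rather than $\sup|G'|$.
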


\begin{proof}
Let $f(x)=x+ax^{m+1}+F(x)$, where $F$ is an analytic function defined in a neighborhood of $0$ satisfying $F(0)=0$ and $|F(x)|=O(x^{2m+1})$. We show here that the function $\tilde f=\A_{m+1}\circ f\circ (\A_{m+1}^{-1})_\zeta$ can be written
\begin{equation*}
\tilde f(\eta )=\eta +a+G(\eta )
\end{equation*}
where $|G(\eta )|=O(|\eta |^{-1})$ and $|G'(\eta )|=O(|\eta |^{-2})$.  This will imply the proposition.

To see that we may write $\tilde{f}$ in this way, consider the formula
\begin{equation*}
\tilde f(\eta ) =\frac{-1}{m\left(  (\A_{m+1}^{-1})_\zeta(\eta) +a  (\A_{m+1}^{-1})_\zeta(\eta) ^{m+1}+ F((\A_{m+1}^{-1})_\zeta(\eta)) \right)^{m}} .
\end{equation*}
By expanding the polynomial in the denominator, and using $(\A_{m+1}^{-1})_\zeta(\eta)^m=-\dfrac{1}{m\eta}$, we obtain
\begin{equation*}
\tilde f(\eta)= \frac{-1}{\left( -\frac{1}{\eta }\right) +a\left( -\frac{1}{\eta }\right) ^{2}+K(\eta )},
\end{equation*}
where $K$ is a differentiable function defined in an $m$-neighborhood of $\infty $ satisfying $|K(\eta )|=O(|\eta |^{-3})$ and $|K'(\eta )|=O(|\eta |^{-4})$.  This gives
\begin{equation*}
\tilde{f}(\eta)=\frac{\eta }{1-\left( \frac{a}{\eta}+\eta K(\eta)\right)}=\eta +a+\eta^{2}K(\eta )+\sum_{n=2}^{\infty } \eta \left( \frac{a}{\eta }+\eta K(\eta)\right)^{n}.
\end{equation*}

Now, via the estimates on $K$, it is clear that $G$ is a function with the desired estimates.
\end{proof}

\subsection{Defining $\tilde h$ on $\Zp$-orbits}

In the following proposition we begin to construct our conjugating map $\tilde h$ at infinity.  At this point we are defining it on individual $\Zp$-orbits.  This is the cornerstone of our proof of Theorem \ref{lipeomorphic conjugacy}, so we give some motivation.  Let $\tilde f_0(\eta)=\eta+a$.  For $\tilde h$ to conjugate $\tilde f$ to $\tilde f_0$ we must have
\[ \tilde h(\eta)+a+G(\tilde h(\eta))=\tilde h(\eta+a). \]
Writing $\tilde h(\eta)=\eta+\hat h(\eta)$, this translates to the condition
\[ \hat h(\eta+a)=\hat h(\eta)+G(\eta+\hat h(\eta)). \]
So once we have chosen an $\hat h(\eta)$, all values of $\hat h$ on $\eta+ \W a$ are obtained by iterating this equation.  Namely,
\begin{equation*}
\hat h(\eta+na)= \sum_{j=0}^{n-1} G(\eta+ja+\hat h(\eta+ja)).
\end{equation*}
For the following arithmetic proof, recall the divisibility notation $x|y$ when $x,y \in \Delta$.  This simply means that $\dfrac{y}{x} \in \Delta$, which is the same as $|y| \leq |x|$.

\begin{prop} \label{Ouija}     Let $K$ be a $p$-adic field, $a \in \Delta'$, $Y$ an $m$-neighborhood of infinity, and $k \geq 2$.  Let $G: Y \to K$ be a function satisfying (\ref{G-estimate}) and (\ref{G-Lip-estimate}) for some $C$ and $\delta=p^{-k}$.   By shrinking $Y$ if necessary we assume that
\begin{equation} \label{busdate}
 |G(\eta)| < \frac{1}{p^k}
\end{equation}
for all $\eta \in Y$.
Fix $\eta_0  \in Y$, and define a sequence $\hat{h}_n=\hat{h}_n[\eta_0;a]$ for $n \geq 0$ via $\hat{h}_0=0$ and
\begin{equation} \label{Adrsum}
\hat{h}_n= \sum_{j=0}^{n-1} G(\eta_0+ja+\hat{h}_j).
\end{equation}
Then
\begin{enumerate}
\item For all $n$, $\hat{h}_n \in (p^k)$, where $(p^k)$ is the ideal $p^k \Delta \subseteq K$.
\item If $m,n \in \N$ and $m \equiv n \Mod p^{\ell}$ then
$p^{\ell+k}| (\hat{h}_m-\hat{h}_n)$.
\end{enumerate}
\end{prop}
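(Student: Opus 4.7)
I prove (i) by induction on $n$. The base $\hat h_0 = 0 \in (p^k)$ is immediate. For the inductive step, assuming $\hat h_j \in (p^k)$ for all $j < n$, the element $ja + \hat h_j$ lies in $\Delta$ (since $a,j,\hat h_j \in \Delta$), so Lemma \ref{mlemma} (applied with the coefficient $1$ in place of $a$) yields $Y + \Delta = Y$, hence $\tau_j := \eta_0 + ja + \hat h_j \in Y$. Thus $G(\tau_j)$ is defined, $|G(\tau_j)| < p^{-k}$ by (\ref{busdate}), and the ultrametric inequality applied to (\ref{Adrsum}) yields $|\hat h_n| < p^{-k}$.

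For (ii), I first observe that the shifted sequence $\hat h'_i := \hat h_{n+i} - \hat h_n$ satisfies the same recursion (\ref{Adrsum}) with $\eta_0$ replaced by $\tau_n \in Y$, so I may assume $n = 0$. Write $m = p^\ell M$; by telescoping,
\[
\hat h_{p^\ell M} \;=\; \sum_{i=0}^{M-1} \hat h_{p^\ell}[\tau_{ip^\ell}],
\]
where $\hat h_n[\xi]$ denotes the sequence (\ref{Adrsum}) with starting point $\xi$. Hence by ultrametricity it suffices to prove $|\hat h_{p^\ell}[\xi]| \leq p^{-(k+\ell)}$ for every $\xi \in Y$, which I establish by induction on $\ell$ (base case $\ell = 0$ being (i)). For the step $\ell \to \ell+1$, partition the $p^{\ell+1}$ indices according to residue modulo $p^\ell$:
\[
\hat h_{p^{\ell+1}}[\xi] \;=\; \sum_{r=0}^{p^\ell-1} \sum_{s=0}^{p-1} G(\tau_{r+sp^\ell}).
\]
By the inductive hypothesis (applied with $\tau_r$ as starting point, and iterated by telescoping to cover $sp^\ell$ steps), $|\hat h_{sp^\ell}[\tau_r]| \leq p^{-(k+\ell)}$, so
\[
|\tau_{r+sp^\ell} - \tau_r| \;\leq\; \max\bigl(|sp^\ell a|,\, p^{-(k+\ell)}\bigr) \;\leq\; p^{-(\ell+1)},
\]
the final inequality using $|a| \leq p^{-1}$ and $k \geq 2$. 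The Lipschitz estimate then yields $|G(\tau_{r+sp^\ell}) - G(\tau_r)| \leq p^{-(k+\ell+1)}$, so each inner sum equals $p \cdot G(\tau_r) + \varepsilon_r$ with $|\varepsilon_r| \leq p^{-(k+\ell+1)}$. Summing over $r$ gives
\[
\hat h_{p^{\ell+1}}[\xi] \;=\; p \cdot \hat h_{p^\ell}[\xi] + \sum_{r} \varepsilon_r,
\]
and the inductive hypothesis $|\hat h_{p^\ell}[\xi]| \leq p^{-(k+\ell)}$ together with $|p|=p^{-1}$ yields $|p \cdot \hat h_{p^\ell}[\xi]| \leq p^{-(k+\ell+1)}$, closing the induction.

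The main obstacle is obtaining the sharpened estimate $|\tau_{r+sp^\ell} - \tau_r| \leq p^{-(\ell+1)}$ (rather than merely $p^{-\ell}$), which requires $|a| \leq p^{-1}$; this is arranged by a preliminary rescaling $x \mapsto cx$ in $f(x) = x + ax^{m+1} + \cdots$, which replaces $a$ by $ac^m$ and preserves lipeomorphic equivalence. Combined with the factor $|p| = p^{-1}$ in $p \cdot \hat h_{p^\ell}$, this delivers the per-step improvement and is the ``light arithmetic'' advertised in the introduction.
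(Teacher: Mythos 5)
Your proof follows the same strategy as the paper's: group the telescoping sum defining $\hat h_m - \hat h_n$ by residues modulo $p^\ell$, use the inductive hypothesis together with the Lipschitz bound on $G$ to collapse each group to $p$ times a representative value, and close the induction using $|p|=p^{-1}$. The packaging is somewhat cleaner than the paper's — you reduce to $n=0$ via the shift identity $\hat h_{n+i}[\eta_0]-\hat h_n[\eta_0]=\hat h_i[\tau_n]$ and state the induction as the uniform bound $|\hat h_{p^\ell}[\xi]|\leq p^{-(k+\ell)}$ over all starting points $\xi\in Y$, where the paper instead parametrizes $j=p^\ell q+r$ with $r\in\{r_0,\dots,r_0+p^\ell-1\}$ and sums over $q$ from $c$ to $d-1$ — but the underlying arithmetic is identical.

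Your diagnosis of the requirement $|a|\leq p^{-1}$ is the most valuable part, and in fact the paper's proof implicitly needs it too but does not say so: when the paper replaces $G(\eta_0+(p^\ell q+r)a+\hat h_{p^\ell q+r})$ by $G(\eta_0+ra+\hat h_r)$, the arguments differ by $p^\ell qa+(\hat h_{p^\ell q+r}-\hat h_r)$, whose norm is only $\leq p^{-\ell}$ when $|a|=1$ (since $|q|$ need not be small), so the Lipschitz estimate yields only $p^{-(k+\ell)}$, not the needed $p^{-(k+\ell+1)}$. The paper's proof of Theorem \ref{curbed} normalizes only to $a\in\Delta$, so this is a genuine hidden hypothesis; your rescaling $x\mapsto cx$ (replacing $a$ by $ac^m$, with $|c|$ small) is the right way to arrange $|a|\leq p^{-1}$ in any $p$-adic field while preserving the intended conclusion. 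The one caveat worth flagging is that, strictly, your argument proves Proposition \ref{Ouija} under the added hypothesis $|a|\leq p^{-1}$ rather than for all $a\in\Delta'$; since this can be arranged at the point in the proof of Theorem \ref{curbed} where Proposition \ref{Ouija} is invoked, and is clearly what is needed, this is the correct reading — it would be cleaner to add $|a|\leq p^{-1}$ to the statement of the proposition.
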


\begin{proof}
Statement (i) follows by (\ref{busdate}), noting that the arguments of $G$ in (\ref{Adrsum}) make sense inductively by (i) and Lemma \ref{mlemma}.  We prove the second statement by induction on $\ell$.  For $\ell=0$ we must show that $p^k| (\hat{h}_m-\hat{h}_n)$ for all $m,n$.  This follows from (i).
Now assume the statement for a given $\ell$; we now prove it for $\ell+1$.  Suppose that $p^{\ell+1}|(n-m)$, with $n \geq m$.
In particular $m$ and $n$ have the same remainder $r_0$ upon division by $p^\ell$.
Write $n=dp^\ell+r_0$  and $m=cp^\ell+r_0$ for $0 \leq r_0 < p^\ell$.  Note that $p|(d-c)$.

We need to show that
\begin{equation*}
p^{\ell+k+1} | \sum_{j=m}^{n-1} G(\eta_0+ja+\hat{h}_j).
\end{equation*}

Writing each $j=p^{\ell}q+r$ via the division algorithm, this becomes
\begin{equation} \label{ttiger}
p^{\ell+k+1} | \sum_{r=r_0}^{r_0+p^\ell-1} \sum_{q=c}^{d-1}  G(\eta_0+(p^\ell q+r)a +\hat{h}_{p^\ell q+r}).
\end{equation}
By induction we know that $\hat{h}_{p^\ell q+r} \equiv \hat{h}_r \Mod p^{\ell+k}$.  Therefore by Estimate (\ref{G-Lip-estimate}), which says that $|G(\eta_1)-G(\eta_2)| \leq \dfrac{1}{p^k}|\eta_1-\eta_2|$, the sum in (\ref{ttiger}) is congruent $\Mod p^{\ell+k+1}$ to
\begin{equation*}
\sum_{r=r_0}^{r_0+p^{\ell}-1} \sum_{q=c}^{d-1}  G(\eta_0 +ra +\hat{h}_r)=(d-c) \sum_{r=r_0}^{r_0+p^{\ell}-1} G(\eta_0 +ra +\hat{h}_r).
\end{equation*}
Since $p|(d-c)$, we reduce to proving that
\begin{equation*}
p^{\ell+k} |  \sum_{r=r_0}^{r_0+p^{\ell}-1} G(\eta_0+ra+\hat{h}_r)=\hat h_{p^\ell+r_0}-\hat h_{r_0},
\end{equation*}
which follows by induction.
\end{proof}

\begin{define} Given $\eta \in Y$, write $\orb(\eta)= \{ \eta + z a \mid z \in \Zp \}$.    \end{define}

Note that $\eta + \W a$  is dense in $\orb(\eta)$.  Recall that $\tilde f$ and $t_a$ satisfy $\tilde f(\eta)=\eta+a+G(\eta)$ and $t_a(\eta)=\eta+a$.

\begin{cor} \label{lip function}Define a function $\hat{h}: \eta_0+ \W a \to (p^k)$ via $\hat{h}(\eta_0+na)=\hat{h}_n[\eta_0,a]$.  Then $\hat{h}$ is a bounded Lipschitz function satisfying $|\hat{h}(\eta )|\leq p^{-k}$, with Lipschitz constant $\dfrac{1}{p^k}$.   Moreover it satisfies $\tilde f \circ h=h \circ  t_a$ on $\eta_0+\W a$.\end{cor}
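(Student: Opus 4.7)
The plan is to unpack each claim of the corollary as a direct consequence of Proposition \ref{Ouija} and the defining recursion (\ref{Adrsum}). Well-definedness is automatic: since $a \in \Delta'$, the representation $\eta = \eta_0 + na$ with $n \in \W$ is unique, so $\hat h(\eta) = \hat h_n$ is unambiguous. Boundedness is Proposition \ref{Ouija}(i) restated: $\hat h_n \in (p^k)$ means $|\hat h(\eta)| \leq p^{-k}$. (That the recursive arguments $\eta_0 + ja + \hat h_j$ stay in the $m$-neighborhood $Y$ where $G$ is defined is guaranteed by Lemma \ref{mlemma}, since $\hat h_j \in \Delta$ by Proposition \ref{Ouija}(i).)

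For the Lipschitz bound, let $\eta_1 = \eta_0 + ma$ and $\eta_2 = \eta_0 + na$ be distinct, and set $\ell = \ord_p(m - n)$, so $|m-n| = p^{-\ell}$. Proposition \ref{Ouija}(ii) gives $|\hat h_m - \hat h_n| \leq p^{-(\ell+k)}$, while $|\eta_1 - \eta_2| = |m-n|\,|a| = p^{-\ell}|a|$, so
\[
|\hat h(\eta_1) - \hat h(\eta_2)| \leq \frac{1}{p^k\,|a|}\,|\eta_1 - \eta_2|.
\]
When $a \in \Delta^\times$ this is exactly the claimed Lipschitz constant $1/p^k$; for general $a \in \Delta'$ the extra factor of $1/|a|$ should be absorbed into the constant (or one views the orbit parameter $z \in \Zp$ itself, rather than $za$, as the distance coordinate, in which case the constant is exactly $p^{-k}$).

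For the conjugation on $\eta_0 + \W a$, interpret $h(\eta) = \eta + \hat h(\eta)$ per the strategy of Section \ref{Gintro}. Then for $\eta = \eta_0 + na$,
\[
\tilde f(h(\eta)) = (\eta + \hat h_n) + a + G(\eta_0 + na + \hat h_n) = \eta_0 + (n+1)a + \hat h_{n+1},
\]
where the last equality is the recursion (\ref{Adrsum}) for index $n+1$. This equals $h(\eta_0 + (n+1)a) = h(t_a(\eta))$, as required.

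The main obstacle is not any individual calculation — Proposition \ref{Ouija} has already carried out the hard $p$-adic arithmetic — but rather tracking the Lipschitz constant carefully in terms of $|a|$, and verifying that the arguments of $G$ appearing after the later extension by continuity from $\eta_0 + \W a$ to all of $\orb(\eta_0)$ continue to lie in the $m$-neighborhood where $G$ is defined and satisfies (\ref{G-estimate})–(\ref{G-Lip-estimate}).
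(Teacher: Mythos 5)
Your argument is correct and follows the paper's own proof essentially line by line: boundedness from Proposition \ref{Ouija}(i), the Lipschitz estimate from Proposition \ref{Ouija}(ii), and the intertwining relation by checking the recursion directly. You have, however, caught a genuine wrinkle: the paper's chain of (in)equalities ends with $\frac{1}{p^k}|m-n|=\frac{1}{p^k}|\eta_2-\eta_1|$, which silently uses $|a|=1$; for general $a\in\Delta'$ one gets the constant $\frac{1}{p^k|a|}$, exactly as you computed. Since the corollary commits to the specific number $\frac{1}{p^k}$, this is not merely a matter of absorbing a constant; but the discrepancy is harmless downstream because Proposition \ref{Lipinfact} restates the Lipschitz bound with the $|a|$-dependent correction ($p^{-\ell}\leq|a|$, constant $\frac{1}{p^{k-\ell}}$). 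Your alternative reading — measuring distance by the orbit parameter $z\in\Zp$ rather than by $|\eta_2-\eta_1|$ — is also a reasonable way to reconcile the stated constant.
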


\begin{proof}
The bound on $\hat h$ follows immediately from part (i) of Proposition \ref{Ouija}. Let $\eta_1, \eta_2 \in \eta_0+ \W a$.  Assume $\eta_1=\eta_0+ma$ and $\eta_2=\eta_0+na$ with
$m,n \in \W$ unequal.  Write $n-m=up^\ell$ with $p$ not dividing $u$ and $\ell \geq 0$.  Then
\begin{equation*}
\begin{split}
 |\hat{h}(\eta_2)-\hat{h}(\eta_1)| &= |\hat{h}_m-\hat{h}_n| \\
& \leq \frac{1}{p^{k+\ell}} \\
&= \frac{1}{p^k} |m-n| \\
&=\frac{1}{p^k}|\eta_2-\eta_1|, \\
\end{split}
\end{equation*}
where we have used Proposition \ref{Ouija} (ii) for the first inequality.  The functional equation follows from the evident equality
\begin{equation*}
\hat h_{n+1}[\eta_0,a]=\hat h_n[\eta_0,a]+G(\eta_0+na+\hat h_n[\eta_0,a]).
\end{equation*}
\end{proof}

\subsection{A Lipschitz map at infinity}
In this section we show how to define $\tilde h$ uniformly on all the $\Zp$-orbits in $Y$.
Write $K=\Q_pa \oplus X$ as in Definition \ref{independent}. Fix $\ell \in \N$ so that $p^{-\ell} \leq |a|$.
Let $Y$ be a sufficiently small $m$-neighborhood of infinity.  That is, $Y$ is small enough to satisfy the hypotheses of Proposition \ref{Ouija} for some $k >\ell$.

Choose a fixed set $D=\{ \eta_0 \}$ of representatives for cosets of $\Zp a$ in $\Qp a$.  (For instance if $a=1$ we may use the set of $p$-adic numbers with no integral part.) Then the set $D+\W a+X$ is dense in $K$.  Since $Y$ is open, it follows that $(D+\W a+X) \cap Y$ is dense in $Y$.

If $\eta_0 \in D$, $n \in \W$, $x \in X$, and $\eta_0+na+x \in Y$, define $\hat{h}(\eta_0+na+x)=\hat{h}_n[\eta_0+x;a]$ as in (\ref{Adrsum}).

\begin{prop} \label{Lipinfact} Pick $\ell \geq 1$ so that $p^{-\ell} \leq |a|$.   The function $\hat{h}$ as defined above is Lipschitz and in fact
\[ |\hat{h}(\beta)-\hat{h}(\alpha)| \leq \frac{1}{p^{k-\ell}}|\beta-\alpha|, \]
for all $\alpha,\beta \in (D+\W a+X) \cap Y$.
\end{prop}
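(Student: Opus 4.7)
The plan is to decompose $\hat h(\beta) - \hat h(\alpha)$ into a piece controlled by Corollary \ref{lip function} (varying the iteration index) and a new piece (varying the base point), then relate everything back to $|\beta - \alpha|$ through the norm $|\cdot|_1$ of Lemma \ref{independent}. Write $\alpha = \eta_0 + ma + x$ and $\beta = \eta_0' + na + x'$ with $\eta_0, \eta_0' \in D$, $m, n \in \W$, $x, x' \in X$, and set $\gamma_1 := \eta_0 + x$, $\gamma_2 := \eta_0' + x'$; both lie in $Y$ by Lemma \ref{mlemma}. Then
\[ \hat h(\beta) - \hat h(\alpha) = \bigl(\hat h_n[\gamma_2;a] - \hat h_n[\gamma_1;a]\bigr) + \bigl(\hat h_n[\gamma_1;a] - \hat h_m[\gamma_1;a]\bigr), \]
and the second summand has norm at most $p^{-k}|(n-m)a|$ by Corollary \ref{lip function}.

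For the first summand, I would prove by induction on $n$ that
\[ |\hat h_n[\gamma_2;a] - \hat h_n[\gamma_1;a]| \leq \frac{1}{p^k}|\gamma_2 - \gamma_1| \]
for all $\gamma_1, \gamma_2 \in Y$. The base case $n=0$ is trivial since $\hat h_0 \equiv 0$; the inductive step uses the recursion $\hat h_{n+1}[\gamma;a] = \hat h_n[\gamma;a] + G(\gamma + na + \hat h_n[\gamma;a])$ together with the Lipschitz estimate (\ref{G-Lip-estimate}) on $G$ with constant $1/p^k$. By the inductive hypothesis and the ultrametric inequality, the difference of the two $G$-arguments has norm at most $|\gamma_2 - \gamma_1|$, so $G$ contributes at most $p^{-k}|\gamma_2 - \gamma_1|$, and one more application of the ultrametric closes the induction. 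Combining with Corollary \ref{lip function} yields
\[ |\hat h(\beta) - \hat h(\alpha)| \leq \frac{1}{p^k}\max\bigl(|\gamma_2 - \gamma_1|,\ |(n-m)a|\bigr). \]

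It remains to bound this maximum by $p^{\ell}|\beta - \alpha|$, and I would split on whether $\eta_0 = \eta_0'$. If yes, then $\gamma_2 - \gamma_1 = x' - x \in X$, and Lemma \ref{independent} gives $|\beta - \alpha|_1 = \max(|(n-m)a|_1,\ |x'-x|_1)$; combined with $|v| \leq |v|_1 \leq p|v|$ from Proposition \ref{Bourbaki}, both $|\gamma_2 - \gamma_1|$ and $|(n-m)a|$ are at most $p|\beta - \alpha|$, giving the Lipschitz constant $\frac{1}{p^{k-1}} \leq \frac{1}{p^{k-\ell}}$ since $\ell \geq 1$. If $\eta_0 \neq \eta_0'$, then $\xi_0 \in \Qp$ defined by $\eta_0' - \eta_0 = \xi_0 a$ lies outside $\Zp$, so $|\xi_0| \geq p$; since $|n-m| \leq 1 < |\xi_0|$, the ultrametric identity gives $|(\xi_0 + n-m)a| = |\xi_0||a| \geq p|a|$, and Lemma \ref{independent} then yields $|\beta - \alpha| \geq p^{-1}|\beta - \alpha|_1 \geq |a| \geq p^{-\ell}$. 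Combined with the uniform bound $|\hat h| \leq p^{-k}$ from Proposition \ref{Ouija}(i), we conclude $|\hat h(\beta) - \hat h(\alpha)| \leq p^{-k} \leq \frac{|\beta - \alpha|}{p^{k-\ell}}$.

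The main obstacle is bookkeeping rather than conceptual: the factor of $p$ lost in passing between $|\cdot|$ and the auxiliary norm $|\cdot|_1$ is precisely what forces the weaker constant $\frac{1}{p^{k-\ell}}$ in place of the sharp $\frac{1}{p^k}$ that holds on a single $\Zp$-orbit via Corollary \ref{lip function}.
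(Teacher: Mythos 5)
Your proof is correct and follows essentially the same route as the paper: split $\hat h(\beta)-\hat h(\alpha)$ into a piece varying the base point (bounded by an induction using the $\delta=p^{-k}$ Lipschitz estimate on $G$) and a piece varying the iteration index (bounded by Corollary \ref{lip function}), then convert $\max(|x'-x|,|(n-m)a|)$ to $|\beta-\alpha|$ via the comparison $\frac{1}{p}|\cdot|_1\leq|\cdot|\leq|\cdot|_1$ from Proposition \ref{Bourbaki} and Lemma \ref{independent}, with the $\eta_0\neq\eta_0'$ case handled directly by the bound $|\hat h|\leq p^{-k}$. The only cosmetic differences are that the paper disposes of $\eta_0\neq\eta_0'$ up front rather than at the end, and states the base-point estimate (its inequality (\ref{quick})) only for a fixed $\eta_0$, whereas your induction proves it for arbitrary $\gamma_1,\gamma_2\in Y$; also your line $|(\xi_0+n-m)a|=|\xi_0||a|$ should really be $|(\xi_0+n-m)a|_1=|\xi_0||a|_1$, but since $|a|\leq|a|_1$ the chain of inequalities goes through unchanged.
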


\begin{proof}
Say $\alpha=\eta_0+ma+x$ and $\beta=\eta_0'+na+y$.  If $\eta_0' \neq \eta_0$ then
\begin{equation*}
\begin{split}
|\alpha-\beta| & \geq \frac{1}{p}|\alpha-\beta|_1 \\
&\geq  \frac{1}{p} |\eta_0-\eta_0'| \\
&> |a|. \\
\end{split}
\end{equation*}
The first inequality follows from Proposition \ref{Bourbaki}.  The second follows from Lemma \ref{independent}, since
$\left|\frac{\eta_0-\eta_0'}{a}\right| >1$.
  Since $\hat{h}(\alpha),\hat{h}(\beta) \in (p^k)$ by Proposition \ref{Ouija}, we have
\begin{equation*}
\begin{split}
 |\hat{h}(\beta)-\hat{h}(\alpha)|  &\leq \frac{1}{p^k} \\
                                   & < \frac{1}{p^k|a|}|\beta- \alpha|\\
                                   & \leq \frac{1}{p^{k-\ell}}|\beta-\alpha|. \\
 \end{split}
 \end{equation*}

So we may assume that $\eta_0=\eta_0'$.  We will show that
\begin{equation} \label{quick}
 |\hat{h}(\eta_0+ma+x)-\hat{h}(\eta_0+ma+y)| \leq \frac{1}{p^k}|x-y|
\end{equation}
and
\begin{equation} \label{draw}
|\hat{h}(\eta_0+ma+y)-\hat{h}(\eta_0+na+y)| \leq \frac{1}{p^k}|m-n|.
\end{equation}

Combining these we obtain
\begin{equation} \label{combined}
|\hat{h}(\alpha)-\hat{h}(\beta)| \leq \frac{1}{p^k} \max (|x-y|,|m-n|).
\end{equation}

By Proposition \ref{Bourbaki} and Lemma \ref{independent}, we have
\begin{equation*}
\begin{split}
 \max(|m-n|,|x-y|) &\leq \max(|m-n|,|x-y|_1) \\
& \leq |\alpha-\beta|_1 \\
& \leq p|\alpha-\beta|. \\
\end{split}
\end{equation*}

Together with (\ref{combined}) this gives
\[ |\hat{h}(\alpha)-\hat{h}(\beta)| \leq \frac{1}{p^{k-1}}|\alpha-\beta|, \]
giving the proposition.

We now prove inequality (\ref{quick}).  For $m=0$, we have $\hat{h}(\eta_0+x)=\hat{h}(\eta_0+y)=0$ by construction.  Now inductively assume the inequality is true up to $m$.  For $m+1$ we have
\[\left|\hat{h}(\eta_0+(m+1)a+x)-\hat{h}(\eta_0+(m+1)a+y)\right| =\left| \sum_{j=0}^m (G(\eta_0+ja+x+\hat{h}_j^x)-G(\eta_0+ja+y+\hat{h}_j^y))\right|, \]
where $\hat{h}_j^x=\hat{h}(\eta_0+ja+x)$ and $\hat{h}_j^y=\hat{h}(\eta_0+ja+y)$.
This expression is no greater than
\begin{equation*}
\begin{split}
 \max_j \left|G(\eta_0+ja+x+\hat{h}_j^x)-G(\eta_0+ja+y+\hat{h}_j^y)\right| &\leq \max_j \frac{1}{p^{k+1}}|x-y+\hat{h}_j^x-\hat{h}_j^y| \\
& \leq \frac{1}{p^{k+1}} |x-y|.  \\
\end{split}
\end{equation*}
We have used Proposition \ref{Left4Adrian} for the first inequality, using $\delta=\dfrac{1}{p^k}$, and our inductive hypothesis for the second inequality.  Equation (\ref{draw}) follows from Proposition \ref{Ouija}(ii) applied to $\eta_0+y$.

\end{proof}

\begin{thm} \label{mcgraw} Let $K$ be a $p$-adic field, let $Y$ be an $m$-neighborhood of infinity in $K$, and let $G$ be a $K$-valued function defined for $\eta \in Y$.  Suppose that there are positive constants $C,\delta>0$ so that for all $\eta \in Y$, $G$ satisfies estimates (\ref{G-estimate}) and (\ref{G-Lip-estimate}).

Define $\tilde f(\eta)=\eta+a+G(\eta)$.  Let $\eps >0$.  Then there exists an $m$-neighborhood of infinity $Y' \subseteq Y$ and a homeomorphism $\tilde h: Y' \to Y'$ of the form
\begin{equation*}
\tilde h(\eta)=\eta + \hat{h}(\eta),
\end{equation*}
so that
\begin{enumerate}
\item
$(\tilde h^{-1} \circ \tilde f \circ \tilde h)(\eta)=\eta+a. $

\item
$ |\hat{h}(\eta)| < \eps . $
\item
$ | \hat{h}(\eta_2)-\hat{h}(\eta_1)| < \delta |\eta_2-\eta_1|. $

\end{enumerate}
\end{thm}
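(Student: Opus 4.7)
The strategy is to assemble Proposition \ref{Ouija}, Corollary \ref{lip function}, and Proposition \ref{Lipinfact} into a single self-homeomorphism of a sufficiently small $m$-neighborhood of infinity. First I would fix $\ell \in \N$ with $p^{-\ell} \leq |a|$ and then choose an integer $k$ large enough that $p^{-k} < \eps$ and $p^{-(k-\ell)} < \delta$. Using the decay estimate (\ref{G-estimate}) on $G$, I shrink $Y$ to an $m$-neighborhood of infinity $Y'$ on which $|G(\eta)| < p^{-k}$ and on which $|\eta|$ exceeds $p^{-k}$; this is precisely the hypothesis needed to invoke Proposition \ref{Ouija} with the bound (\ref{busdate}).

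Proposition \ref{Lipinfact}, applied with this $k$, then yields a function $\hat h$ defined on the dense subset $(D + \W a + X) \cap Y'$ of $Y'$, taking values in $p^k \Delta$ and Lipschitz with constant $p^{-(k-\ell)} < \delta$. The set $Y'$ is a closed subset of the complete field $K$ (the complement of an open disk around $0$), hence itself complete, so $\hat h$ extends uniquely by uniform continuity to a Lipschitz function $\hat h \colon Y' \to K$ with the same constant, preserving the bound $|\hat h(\eta)| \leq p^{-k} < \eps$, which is property (ii). Property (iii) is the Lipschitz estimate itself. The conjugacy relation $\hat h(\eta+a) = \hat h(\eta) + G(\eta + \hat h(\eta))$, valid on the dense subset by Corollary \ref{lip function}, extends by continuity to all of $Y'$, where Lemma \ref{mlemma} guarantees that $\eta + a$ remains in $Y'$; this is equivalent to property (i).

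It remains to verify that $\tilde h(\eta) = \eta + \hat h(\eta)$ is a self-homeomorphism of $Y'$. Because $|\hat h(\eta)| \leq p^{-k} < |\eta|$ on $Y'$, the non-archimedean identity gives $|\tilde h(\eta)| = |\eta|$, so $\tilde h(Y') \subseteq Y'$. Injectivity is immediate from
\begin{equation*}
|\eta_1 - \eta_2| = |\hat h(\eta_2) - \hat h(\eta_1)| \leq \delta |\eta_1 - \eta_2|
\end{equation*}
combined with $\delta < 1$. For surjectivity, given $\eta' \in Y'$, I would apply the Banach fixed point theorem to the $\delta$-contraction $\eta \mapsto \eta' - \hat h(\eta)$ on $Y'$: its iterates starting from $\eta'$ remain in $Y'$ by the same ultrametric estimate, the sequence is Cauchy, and its limit $\eta \in Y'$ satisfies $\tilde h(\eta) = \eta'$. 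Continuity of $\tilde h^{-1}$ follows from its own Lipschitz estimate derived from the contraction.

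The essential content of the theorem --- the $p$-adic cancellation that makes the partial sums $\hat h_n$ Lipschitz across $\Zp$-orbits --- has already been extracted in Proposition \ref{Ouija} and globalized in Proposition \ref{Lipinfact}. Consequently, the only real obstacle here is book-keeping: arranging a single choice of $k$ and a single $m$-neighborhood $Y'$ that simultaneously delivers the Lipschitz constant $\delta$, the pointwise bound $\eps$, the self-map property $\tilde h(Y') \subseteq Y'$, and the completeness needed for the fixed-point construction of the inverse.
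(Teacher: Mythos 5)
Your architecture matches the paper's: pick $k$, shrink $Y$ so that (\ref{busdate}) holds, define $\hat{h}$ on the dense set $(D+\W a + X)\cap Y'$ via Propositions \ref{Ouija} and \ref{Lipinfact}, and extend by uniform continuity. In fact you go beyond the paper's very terse argument, which extends by density and then stops without verifying that $\tilde h$ is a self-bijection of $Y'$; your Banach fixed-point construction of the inverse fills a genuine lacuna. That said, two of your steps do not hold up as written.

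The concrete gap is in the claim $\tilde h(Y')\subseteq Y'$. You infer from $|\hat{h}(\eta)|<|\eta|$ that $|\tilde h(\eta)|=|\eta|$ and conclude $\tilde h(\eta)\in Y'$. But an $m$-neighborhood of infinity $Y'=\A_{m+1}(\pi^{N'}\Delta')$ is in general \emph{not} a union of circles: for $K=\Qp$ with $m\geq 2$, the $m$th powers of units form a proper subgroup of $\Delta^\times$, so $Y'$ meets only a strict subset of each circle of admissible norm, and norm preservation alone does not put $\tilde h(\eta)$ back into $Y'$. The correct device is Lemma \ref{mlemma}, which you cite but in a different place: since $|\hat{h}(\eta)|\leq p^{-k}\leq p^{-\ell}\leq|a|$, one has $\hat{h}(\eta)\in\Delta a$, so $\tilde h(\eta)=\eta+\hat{h}(\eta)\in Y'+\Delta a=Y'$. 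The same repair is needed where you assert that the iterates of $\eta\mapsto\eta'-\hat{h}(\eta)$ stay in $Y'$ ``by the same ultrametric estimate''; again the increment lies in $\Delta a$ and Lemma \ref{mlemma} closes that loop.

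The subtler issue is with your choice of $k$. You require $p^{-(k-\ell)}<\delta$, which forces $p^{-k}<p^{-\ell}\delta\leq\delta$. But Proposition \ref{Ouija} requires that $G$ satisfy (\ref{G-Lip-estimate}) with the constant $p^{-k}$, whereas the hypotheses only give the constant $\delta$; with your $k$, $\delta>p^{-k}$, so the hypothesis of Proposition \ref{Ouija} is not met. One can rescue this by shrinking $Y'$ further and invoking the derivative bound $|G'(\eta)|=O(|\eta|^{-2})$ from the proof of Proposition \ref{Left4Adrian}, which makes the Lipschitz constant of $G$ tend to $0$ near infinity, but that fact is external to the hypotheses of Theorem \ref{mcgraw} as stated. (The paper's own proof picks $k$ from $p^{-k}\leq\eps$ alone, so Proposition \ref{Lipinfact} only delivers the constant $p^{-(k-\ell)}$ rather than $\delta$ in conclusion (iii); your enlargement of $k$ to force (iii) relocates, rather than resolves, this tension. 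You should at minimum flag the dependence.)
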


\begin{proof}
Pick $k$ so that $p^{-k} \leq \eps$.  Shrink $Y$ so that (\ref{busdate}) holds.  Define $\hat h(\eta_0+na+x)$ to be $\hat{h}_n[\eta_0+x;a]$ for $\eta_0+na+x \in Y \cap (D+\N a+X)$.  Let $\tilde h(\eta)=\eta+\hat{h}(\eta)$ on this set.  By Proposition \ref{Lipinfact}, $\tilde h$ is Lipschitz on this domain, which is dense in $Y$.  It therefore extends continuously to $Y$, and satisfies the same estimates and functional equation there.
\end{proof}

\subsection{Proof of Theorem \ref{lipeomorphic conjugacy}}

Theorem \ref{lipeomorphic conjugacy} results from the following:

\begin{thm}\label{curbed}
Let $K$ be a $p$-adic field, and let $f(x)=x+ax^{m+1} + \cdots$ be an analytic function, with $a \neq 0$.  Then, $f$ is lipeomorphically equivalent to $f_{m+1,a}$.  More specifically,  for any $\eps >0$ and $0 < \delta < \half$, there is a neighborhood $X'$ of $0$ which is both $f$ and $f_{m+1,a}$-invariant, and a bijection $h: X' \to X'$ so that $h^{-1} \circ f \circ h=f_{m+1,a}$, and the following holds.  If we write
$h(x)=x+\ol h(x)$ then the map $\hat h$ satisfies the following estimates:
\begin{enumerate}
\item
$|\ol h (x) |\leq |x|^{m+1-\varepsilon }$;
\item
$|\ol h(x_{2})-\ol h(x_{1})|\leq \delta |x_{2}-x_{1}|$.
\end{enumerate}
Moreover if we write $h^{-1}=x+\ol k(x)$, then $\ol k$ has Lipschitz constant no greater than $\delta$.
\end{thm}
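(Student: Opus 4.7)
The plan is to implement the strategy outlined in Section \ref{Gintro}: transport the problem near $0$ to a problem near infinity through $\A_{m+1}(x)=-1/(mx^m)$, solve it there using Theorem \ref{mcgraw}, and pull the resulting lipeomorphism back sector by sector. Concretely, I would first choose an $m$-neighborhood of infinity $Y$ small enough that Proposition \ref{Left4Adrian} provides $\tilde f_\zeta=\A_{m+1}\circ f\circ(\A_{m+1}^{-1})_\zeta$ of the form $\tilde f(\eta)=\eta+a+G(\eta)$, with $G$ satisfying (\ref{G-estimate}) and (\ref{G-Lip-estimate}) for a small Lipschitz constant $\delta'$ (to be calibrated so that after pullback one has constant $\leq\delta$). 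Theorem \ref{mcgraw} then produces a lipeomorphism $\tilde h=\mathrm{id}+\hat h$ on some $Y'\subseteq Y$, with $|\hat h|$ and the Lipschitz constant of $\hat h$ as small as desired, intertwining $\tilde f$ and $t_a$. I would then set
\begin{equation*}
X'=\{0\}\cup\bigcup_{\zeta\in\mu_m(K)}\bigl(X_\zeta\cap\A_{m+1}^{-1}(Y')\bigr),
\end{equation*}
define $h$ on each $X_\zeta\cap X'$ by $(\A_{m+1}^{-1})_\zeta\circ\tilde h\circ\A_{m+1}$, and set $h(0)=0$. The conjugacy $h^{-1}\circ f\circ h=f_{m+1,a}$ is then immediate from $\A_{m+1}\circ f_{m+1,a}=t_a\circ\A_{m+1}$ and $\tilde h\circ t_a=\tilde f\circ\tilde h$.

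For the estimates, unwinding the definition of $h$ yields the explicit formula
\begin{equation*}
h(x)=\frac{x}{\sqrt[m]{1-mx^m\,\hat h(\A_{m+1}(x))}},
\end{equation*}
with the $m$-th root chosen on the branch sending $X_\zeta$ to $X_\zeta$. Corollary \ref{mI} then gives the expansion $h(x)=x+x^{m+1}\hat h(\A_{m+1}(x))+O\bigl(x^{2m+1}\hat h(\A_{m+1}(x))^{2}\bigr)$. Estimate (i) is straightforward: $|\ol h(x)|\leq|x|^{m+1}\cdot|\hat h(\A_{m+1}(x))|$, and since $|\hat h|$ can be made smaller than any prescribed constant by shrinking $Y'$, we obtain $|\ol h(x)|\leq|x|^{m+1-\eps}$ on $X'$ after one final shrinkage.

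Estimate (ii) is the heart of the matter. Within a single sector, I would expand $\ol h(x_2)-\ol h(x_1)$ using the formula above and split it via the ultrametric triangle inequality into a term controlled by $|\hat h(\eta_2)|\cdot|x_2^{m+1}-x_1^{m+1}|$, which is negligible by the uniform smallness of $\hat h$, and a term $|x_1|^{m+1}\cdot|\hat h(\eta_2)-\hat h(\eta_1)|$. For the latter, the identity $\eta_2-\eta_1=(x_1^m-x_2^m)/(m x_1^m x_2^m)$ yields $|\eta_2-\eta_1|\leq|x_2-x_1|/(|m|\cdot|x|^{m+1})$ for nearby $x_1,x_2$ of comparable size, and the Lipschitz bound on $\hat h$ produces precisely $(\delta'/|m|)\,|x_2-x_1|\leq\delta\,|x_2-x_1|$ after the calibration. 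Across different sectors or through $0$, the stronger pointwise bound $|\ol h(x)|=O(|x|^{2m+1})$ (which in fact holds because the iteration in Proposition \ref{Ouija} applied to $G=O(1/|\eta|)$ gives $|\hat h(\eta)|=O(1/|\eta|)$) together with the ultrametric triangle inequality makes the Lipschitz estimate trivial. For the inverse $h^{-1}=\mathrm{id}+\ol k$, since the Lipschitz constant of $\hat h$ is strictly less than $1$, the map $\tilde h$ is an isometry of $Y'$ and $\tilde h^{-1}=\mathrm{id}+\hat k$ with $\hat k=-\hat h\circ\tilde h^{-1}$ inheriting the Lipschitz constant of $\hat h$; exactly the same pullback analysis then yields the Lipschitz bound on $\ol k$. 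The main obstacle is the bookkeeping in (ii): the nonlinear change of variables $\A_{m+1}$ introduces factors of $|x|^{m+1}$ that must cancel precisely against those arising from the $m$-th root in the formula for $h$, and the final constant $\delta$ is recovered only after choosing the Lipschitz constant $\delta'$ of $\hat h$ to be $\delta\cdot|m|$ (or a bit less).
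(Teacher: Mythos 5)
Your overall architecture matches the paper's: reduce to infinity via $\A_{m+1}$, invoke Proposition~\ref{Left4Adrian} and Theorem~\ref{mcgraw} to produce $\tilde h = \mathrm{id}+\hat h$, pull back sector by sector, and paste across $X_\zeta$. Your ultrametric treatment of the cross-sector/annulus case via the pointwise bound $|\ol h(x)|=O(|x|^{2m+1})$ is correct and slightly cleaner than what the paper writes (the paper only invokes $|\ol h(x)|\leq|\pi|^{2N_1}$), and your observation that an ultrametric Lipschitz map with constant $<1$ is automatically an isometry gives a sharper handle on $\ol k$ than the paper's stated $\delta/(1-\delta)$.

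There is, however, a real gap in your treatment of estimate (ii) within a single sector, precisely at the point the paper offloads to Proposition~\ref{Americanproof} in the Appendix. You expand $h(x)=x+x^{m+1}\hat h(\A_{m+1}(x))+O(x^{2m+1}\hat h^2)$ and then estimate only the difference of the leading terms, declaring the error term ``negligible by the uniform smallness of $\hat h$.'' But $\hat h$ is merely Lipschitz, not differentiable, and pointwise smallness of the remainder does not control its Lipschitz constant: the difference $\mathrm{error}(x_2)-\mathrm{error}(x_1)$ must itself be bounded by $\delta|x_2-x_1|$, and your sketch never estimates it. The paper's Appendix circumvents this by writing $\ol h(x)=xF\bigl(mx^m\hat h(\A_{m+1}(x))\bigr)$ with $F(u)=(1-u)^{-1/m}-1$ a bounded Lipschitz function near $0$, and estimating the \emph{whole} expression at once; with that formulation your ultrametric calculation does go through and yields exactly the calibration $\delta'\leq\delta|m|$ you state. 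You should also record why $f$ preserves each sector $X_\zeta$ (Lemma~\ref{Xprops2}; the paper first reduces $f$ to the normal form $x+ax^{m+1}+bx^{2m+1}$ via Theorem~\ref{classification within $K$} to make this easy) — without it $\tilde f_\zeta$ is not a self-map of $Y$, and Proposition~\ref{Left4Adrian} does not apply.
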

In particular $h$ is an isometry, and a lipeomorphism.

\begin{proof}
Fix $\pi \in \pp$.  By Theorem \ref{classification within $K$} we may assume that $f(x)=x+ax^{m+1}+b x^{2m}$ for $b \in K$. By conjugating by a dilation we may assume that $a \in \Delta$.  Pick an integer $N$ large enough so that $\pi^N$, $a \pi^{Nm}$, $b \pi^{2mN} \in m \pp$, and $|\pi|^N < \delta |m|$.
Let $X_\zeta= \{ \pi^N, \pi^{N+1}, \ldots \} \cdot U_\zeta$ as in Section \ref{mnbhd}.

\begin{lemma} \label{Xprops2}
 Each $X_\zeta$ is invariant under $f$.
\end{lemma}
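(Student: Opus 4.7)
The plan is to take an arbitrary $x \in X_\zeta$, decompose it via Proposition \ref{decomp1} as $x = \pi^n u$ with $n \geq N$ and $u \in U_\zeta$, and verify that $f(x)$ admits a decomposition of the same type. The key observation is that the integer $N$ was chosen precisely so that the correction $f(x) - x$, when divided by $\pi^n$, lands in the ideal $m\pp$, which (by Proposition \ref{decomp2}) is exactly the condition needed to stay in the same $\mu_m(K)$-coset $U_\zeta$.

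First I would check that $|f(x)| = |x|$. Since $|x| \leq |\pi|^N < 1$ and $f(x) - x = ax^{m+1} + bx^{2m+1}$ with $|a|,|b| \leq 1$ and $|x|^m < 1$, the non-archimedean triangle inequality gives $|f(x)| = |x| = |\pi|^n$. Consequently, setting $v := f(x)/\pi^n$, we have $|v| = |u| \in (|\pi|, 1]$, so $v \in \Delta_\pi$, and $f(x) = \pi^n v$ with the same exponent $n \geq N$.

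Next I would show $v - u \in m\pp$. A direct calculation yields
\begin{equation*}
v - u \;=\; \frac{f(x) - x}{\pi^n} \;=\; a\pi^{nm}\,u^{m+1} \;+\; b\pi^{2nm}\,u^{2m+1}.
\end{equation*}
Since $n \geq N$, we can write $a\pi^{nm} = (a\pi^{Nm})\,\pi^{(n-N)m}$ and $b\pi^{2nm} = (b\pi^{2Nm})\,\pi^{2(n-N)m}$; both $a\pi^{Nm}$ and $b\pi^{2Nm}$ lie in $m\pp$ by our choice of $N$, and the extra $\pi$-factors lie in $\Delta$, so each term lies in $m\pp \cdot \Delta \subseteq m\pp$. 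Because $u, u^{2m+1} \in \Delta$, this forces $v - u \in m\pp$.

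Finally, the third clause of Proposition \ref{decomp2} says that $U_\zeta$ is closed under $m\pp$-perturbations, so $v \in U_\zeta$. Hence $f(x) = \pi^n v \in \{\pi^N, \pi^{N+1}, \ldots\} \cdot U_\zeta = X_\zeta$, proving $f$-invariance. I do not anticipate a real obstacle here: the only subtle point is making sure the $\pi$-adic valuation of the correction term is large enough to be absorbed into $m\pp$ after dividing by $\pi^n$, which is precisely what the conditions $a\pi^{Nm}, b\pi^{2mN} \in m\pp$ in the choice of $N$ were designed to guarantee.
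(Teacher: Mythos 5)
Your proof is correct and follows essentially the same route as the paper's: factor $f(x) = \pi^n\bigl(u + a\pi^{nm}u^{m+1} + b\pi^{2nm}u^{2m+1}\bigr)$, note that the bracketed term differs from $u$ by an element of $m\pp$ because of the choice of $N$, and invoke the last clause of Proposition \ref{decomp2}. One small inaccuracy: you assert $|b|\leq 1$ in the norm check, but $b$ was never normalized into $\Delta$ (only $a$ was, by the dilation in the proof of Theorem \ref{curbed}); the check $|f(x)|=|x|$ still goes through since $|bx^{2m+1}| \leq |b\pi^{2mN}|\,|x| < |x|$ by the choice of $N$, and in any case $v\in\Delta_\pi$ already follows from $v\in U_\zeta$, so the explicit norm check is superfluous.
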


\begin{proof}
Say that $x= \pi^Mu$, with $u \in U_\zeta$ and $M \geq N$.  We have

\begin{equation*}
\begin{split}
 f(x)   &=  \pi^M u+a(\pi^Mu)^{m+1}+ b (\pi^Mu)^{2m+1} \\
        &=  \pi^M (u+ a\pi^{Mm} u^{m+1} + b \pi^{2mM} u^{2m+1}). \\
 \end{split}
\end{equation*}
By Proposition \ref{decomp2}, the expression in parenthesis is in $U_\zeta$, and it follows that $f(x) \in X_\zeta$.  This concludes the proof of the lemma.
\end{proof}

Write $Y$ for the image of the $X_\zeta$ under $\A_{m+1}$.  Write $\tilde f_\zeta: Y \to Y$ for the function $\A_{m+1} \circ f \circ (\A_{m+1}^{-1})_\zeta$.

Let $\delta'>0$. By Proposition \ref{Left4Adrian} one shrinks the neighborhoods to obtain a Lipschitz constant of $\delta'$ for $G$.  Then, one has the hypotheses for Theorem \ref{mcgraw}.  We therefore obtain a lipeomorphism $\tilde h_\zeta(\eta)=x+\hat h_\zeta(\eta)$ on some $m$-neighborhood $Y' \subseteq Y$, invariant under both $\tilde f_\zeta$ and $t_a$, so that $\tilde h_\zeta^{-1} \circ \tilde f_\zeta \circ \tilde h_\zeta=t_a$.  Moreover $\hat h_\zeta$ has Lipschitz constant $\delta'$.  Write $X'$ for the preimage of $Y'$ under $\A_{m+1}$ and let $X_\zeta'=X' \cap X_\zeta$. Define $h_\zeta: X_\zeta' \to X_\zeta'$ via
\begin{equation*}
h_\zeta=(\A_{m+1}^{-1})_\zeta \circ \tilde h_\zeta \circ \A_{m+1}.
\end{equation*}
Then $h_\zeta$ conjugates $f$ to $f_{m+1,a}$ on $X_\zeta'$.  Write $h_\zeta(x)=x+\ol h_\zeta(x)$.
Now let $\delta>0$.  Possibly shrinking $X'$ and $Y'$ more, we have the hypotheses for Proposition \ref{Americanproof} in the Appendix, and may assume $\ol h_\zeta$ has a Lipschitz constant of $\delta$.

Then we simply define $h: X' \to X'$ via
\begin{equation*}
h(x)= \begin{cases} & h_\zeta(x) \text{ if } x \in X_\zeta', \\
              & 0, \text{ if } x=0. \end{cases}
\end{equation*}
Let $\ol h(x)=h(x)-x$. We claim that, for all $x_1,x_2 \in X'$, we have
\begin{equation} \label{forgotten}
|\ol h(x_2)- \ol h(x_1)| < \delta |x_2-x_1|.
\end{equation}
If $x_1,x_2$ lie in the same sector, or if either of them is $0$, then this follows from properties of $\ol h_\zeta$.  Otherwise let $x_1=\pi^{N_1}u_1$ and $x_2=\pi^{N_2}u_2$, with $u_1 \in U_{\zeta_1}$, $u_2 \in U_{\zeta_2}$, and $\zeta_1 \neq \zeta_2$.  If $N \leq N_1 < N_2$, then
\begin{equation}
|x_2-x_1|= |\pi|^{N_1}.
\end{equation}
If $N \leq N_1=N_2$, then
\begin{equation}
|x_2-x_1| = |\pi|^{N_1} |u_2-u_1| \geq |\pi|^{N_1}|m|,
\end{equation}
by Proposition \ref{decomp2}.

In both of these cases,
\begin{equation}
 |\ol h(x_2)- \ol h(x_1)| \leq |\pi|^{2N_1}.
\end{equation}
Since we have picked $N$ large enough so that $|\pi|^N < \delta |m|$, our claim (\ref{forgotten}) is proven in all cases.
Therefore $h$ is a lipeomorphism and $h^{-1} \circ f \circ h= f_{m+1,a}$ with $\ol h$ having Lipschitz constant $\delta$.
 Note that since $\delta<1$, this implies that $h^{-1}(x)=x+\ol{k}(x)$, where $\ol{k}$ has a Lipschitz constant $\frac{\delta}{1-\delta}$.
This concludes the proof of the theorem.
\end{proof}

\subsection{Extension to the Berkovich Disk} \label{notstolovitch}
In this section we add the condition that $K$ is algebraically closed, for example $K=\C_p$.  In modern-day $p$-adic dynamics, a closed disk $\ol D(a,r)$ in $K$ is considered as a dense subset of a path-connected compact space known as the closed Berkovich disk $\ol D^{\Berk}(a,r)$ (see \cite{Silverman} or \cite{Benedetto} for more details).  Briefly, the Berkovich disk is comprised of four different types of points.  Type I points are the original points of $\ol D(a,r)$.  Type II and III points, which we need not distinguish, correspond to closed disks $\ol D(b,s) \subseteq \ol D(a,r)$.  A Type IV point corresponds to a chain of disks
\begin{equation}
\ol D(a_1,r_1) \supset \ol D(a_2,r_2) \supset \cdots
\end{equation}
with empty intersection.

One may define the open Berkovich disk $D^{\Berk}(a,r)$ simply by deleting the bars in the previous paragraph.  An open connected Berkovich affinoid in $\ol D^{\Berk}(a,r)$ is defined to be a finite intersection of open Berkovich disks and complements of closed Berkovich disks.  The set of open connected Berkovich affinoids forms a basis for a topology on the closed Berkovich disk known as the Gel'fand topology.

A natural question to ask is, when does a homeomorphism $h$ on $\ol D(a,r)$ extend to a homeomorphism on its Berkovich closure?  We settle the following (easy) case here, which is enough for our purposes.

\begin{prop} Suppose that $h: \ol D(a,r) \isom \ol D(a',r)$ is a bijective isometry.
Then $h$ extends uniquely to a homeomorphism of Berkovich disks
\begin{equation}
 h^{\Berk}: \ol D^{\Berk}(a,r) \isom \ol D^{\Berk}(a',r).
\end{equation}
\end{prop}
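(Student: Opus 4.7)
The plan is to define $h^{\Berk}$ by exploiting the fact that a bijective isometry preserves the entire metric lattice of closed disks, and then to show that the resulting map is a homeomorphism in the Gel'fand topology. The whole argument runs on the principle that the Berkovich closure is built functorially from the poset of closed subdisks of $\ol D(a,r)$, and a bijective isometry tautologically induces an isomorphism of this poset.

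First, I would prove the basic ``disk-translation'' lemma: if $b \in \ol D(a,r)$ and $0 \leq s \leq r$, then
\begin{equation*}
h(\ol D(b,s))=\ol D(h(b),s),
\end{equation*}
and likewise for open disks. The inclusion $\subseteq$ is immediate from $|h(x)-h(b)|=|x-b|$. For the reverse inclusion, any $y \in \ol D(h(b),s) \subseteq \ol D(a',r)$ is the image of a unique $x \in \ol D(a,r)$ by the bijectivity of $h$, and the isometry property forces $|x-b| \leq s$.

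Next, I would define $h^{\Berk}$ separately on each of the four types of points. On Type~I points it is $h$ itself. On a Type~II or III point corresponding to a disk $\ol D(b,s) \subseteq \ol D(a,r)$, set $h^{\Berk}(\zeta_{b,s})=\zeta_{h(b),s}$; the lemma shows this depends only on the disk and not on the choice of center. On a Type~IV point represented by a nested chain $\ol D(a_1,r_1) \supset \ol D(a_2,r_2) \supset \cdots$ with empty intersection, send it to the point represented by $\{\ol D(h(a_i),r_i)\}_i$; by the lemma these are nested, and the intersection is again empty, since any point there would pull back under the (also bijective, isometric) inverse $h^{-1}$ to a point of $\bigcap \ol D(a_i,r_i)$. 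The equivalence relation on cofinal chains is preserved by the same reasoning. Applying the construction symmetrically to $h^{-1}$ produces a two-sided inverse, so $h^{\Berk}$ is a bijection.

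For continuity in the Gel'fand topology it suffices to verify that $h^{\Berk}$ carries open Berkovich disks to open Berkovich disks and closed Berkovich disks to closed Berkovich disks, since finite intersections of these (and their complements) generate the topology. Both assertions reduce immediately to the disk-translation lemma. Applying the same reasoning to $(h^{\Berk})^{-1} = (h^{-1})^{\Berk}$ yields a homeomorphism. Uniqueness of the extension follows because Type~I points are dense in $\ol D^{\Berk}(a,r)$ in the Gel'fand topology, so any continuous extension of $h$ from Type~I is determined.

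The main obstacle I anticipate is the delicate bookkeeping for Type~IV points: one must check that the construction descends to equivalence classes of cofinal chains and that emptiness of intersection really does transfer across $h$. Both are routine once the disk-translation lemma is in hand, so the proof really is a sequence of ``morally obvious'' verifications, each justified by the observation that a bijective isometry is an isomorphism of the very metric data from which the Berkovich disks are constructed.
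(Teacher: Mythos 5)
Your proposal is correct and follows essentially the same route as the paper: both proofs observe that a bijective isometry carries disks to disks of the same radius, define $h^{\Berk}$ type by type, obtain bijectivity by running the same construction on $h^{-1}$, verify continuity by checking that Berkovich disks map to Berkovich disks in the Gel'fand topology, and conclude uniqueness from density of Type I points. Your treatment is somewhat more explicit about the well-definedness on Type II/III points and the bookkeeping for Type IV points, but the argument is the same.
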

\begin{proof}
By hypothesis, $h$ is in particular a homeomorphism which takes open (resp. closed) disks to open (resp. closed) disks of the same radius.  The same is true for $h^{-1}$.

We define $h^\Berk$ on Type I points by $h^\Berk(a)=h(a)$.  On Type II and III points we define $h^\Berk$ by $h^\Berk(D(b,s))=h(D(b,s))$.  If $\ol D(a_1,r_1) \supset \ol D(a_2,r_2) \supset \cdots$ is a chain of disks with empty intersection, then the same is true for $h(\ol D(a_1,r_1)) \supset h(\ol D(a_2,r_2)) \supset \cdots$. So we define $h^\Berk$ by taking this chain of images.  Thus $h^\Berk$ is defined for all types of points in $D^{\Berk}(a,r)$.   Similarly we define
\begin{equation}
 (h^{-1})^{\Berk}: \ol D^{\Berk}(a',r) \isom \ol D^{\Berk}(a,r);
\end{equation}
clearly, $(h^{-1})^{\Berk}=(h^{\Berk})^{-1}$.  In particular, $h^{\Berk}$ is a bijection.
This operation is also compatible with restriction: if $D(b,s) \subset D(a,r)$, then it is easy to see that
\begin{equation}
h^{\Berk}| D(b,s)=(h|D(b,s))^{\Berk}.
\end{equation}
Thus the image of an open Berkovich disk under $h^{\Berk}$ is again an open Berkovich disk.  Similarly for closed Berkovich disks, and for open connected Berkovich affinoids.  It follows that $h^{\Berk}$ is an open map in the Gel'fand topology.  Similarly for $(h^{\Berk})^{-1}$ and therefore $h^{\Berk}$ is a homeomorphism.  The uniqueness follows from the density of Type I points in the Berkovich disk.
\end{proof}

\begin{cor} The conjugating map $h$ in Theorem \ref{lipeomorphic conjugacy} extends to the Berkovich disk.
\end{cor}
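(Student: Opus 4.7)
The plan is to reduce directly to the preceding proposition by exhibiting $h$ as a bijective isometry on a suitable closed disk inside its domain $X'$. The algebraic closure hypothesis is already in force in this subsection, so the proposition applies once the hypotheses are checked.

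First I would promote the conclusion of Theorem \ref{curbed} from ``lipeomorphism'' to ``isometry''. Writing $h(x) = x + \ol h(x)$, Theorem \ref{curbed} produces (for any preassigned $\delta \in (0, \tfrac12)$) a conjugating map with $|\ol h(x_2) - \ol h(x_1)| \leq \delta |x_2 - x_1|$. Since $\delta < 1$, the ultrametric strong triangle inequality gives
\begin{equation*}
|h(x_2) - h(x_1)| = \bigl|(x_2 - x_1) + (\ol h(x_2) - \ol h(x_1))\bigr| = |x_2 - x_1|,
\end{equation*}
so $h$ is an isometry on $X'$. The same argument applied to $h^{-1}(x) = x + \ol k(x)$, whose perturbation $\ol k$ has Lipschitz constant $\delta/(1-\delta) < 1$, shows that $h^{-1}$ is an isometry as well.

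Next I would cut down to a closed disk. Since $X'$ is a neighborhood of $0$ in $K$ and $h(0) = 0$, there is some $r>0$ with $\ol D(0,r) \subseteq X'$. Being an isometry fixing $0$, $h$ sends $\ol D(0,r)$ into $\ol D(0,r)$; applying the same reasoning to $h^{-1}$ gives the reverse inclusion. Thus $h$ restricts to a bijective isometry $\ol D(0,r) \isom \ol D(0,r)$.

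Finally I would invoke the preceding proposition with $a = a' = 0$ to obtain the unique continuous extension
\begin{equation*}
h^{\Berk} : \ol D^{\Berk}(0,r) \isom \ol D^{\Berk}(0,r),
\end{equation*}
which is the desired Berkovich extension. There is no real obstacle here; the only substantive point is the upgrade from lipeomorphism to isometry, which is essentially automatic from the non-archimedean triangle inequality once the Lipschitz constant on $\ol h$ is known to be strictly less than $1$.
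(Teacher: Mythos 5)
Your argument matches the paper's own proof in substance: the key (and essentially only) observation is that a Lipschitz constant $\delta < 1$ on $\ol h$ forces $h$ to be an isometry by the strong triangle inequality, after which the preceding proposition applies on a small closed disk. The paper states this more tersely, but the route is the same.
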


\begin{proof} The theorem implies in particular that $|\ol h(x_2)- \ol h(x_1)|< |x_2-x_1|$, thus $h$ is an isometry.
\end{proof}

\section{Orbit Counting and Pointwise Estimates of Conjugating Homeomorphisms}\label{section:s5}

In this section we make a close study of the $\Zp$-orbits of our functions, and how rapidly an intertwining map $h$ needs to shrink or expand them.  Our analysis will quickly recover the known fact that the functions $f_{m+1,a}$ are all homeomorphically equivalent.  A closer study will find precise H\"{o}lder exponents at $0$ for the intertwining maps $h$, as mentioned in the introduction.  Of course, if $h$ is a lipeomorphism then both exponents are (no less than) one.  For a cleaner theory, we consider the set of $\Zp$-orbits abstractly as a space which is discrete except for a clustering condition at $0$.

\subsection{Bullseye Spaces}

The term ``bullseye space'' is motivated by the orbit diagram in Figure 2, and the following idea.  To define a conjugating homeomorphism, one needs to match up $\Zp$-orbits for the two functions.  Since the $\Zp$-orbits are open, the only topological constraint on this is that, as the $\Zp$-orbits in one domain tend to the `bullseye' $0$, so should the matching $\Zp$-orbits in the other domain.

\begin{define} Let $(X_n)_{n \in \N}$ be a sequence of finite sets, so that $X_n$ is nonempty for $n$ sufficiently large.  Let $X$ be the disjoint union of the $X_n$, together with a distinguished element $\ast$.  Topologize $X$ as follows:  The open sets containing $\ast$ are those which contain $\bigcup_{n=N}^\infty X_n$ for some $N$.  Every set that does not contain $\ast$ is open.  Then $X$ is called a bullseye space, and the sets $X_n$ are called the rings of $X$. Let $X^\circ=X-\{\ast \}=\bigcup_{n=1}^\infty X_n$.  We define morphisms of bullseye spaces to be continuous maps sending $\ast$ to $\ast$.
\end{define}

In particular the subspace $X^\circ$ is discrete.  Suppose $Y$ is another bullseye space with rings $Y_n$.  Then a map $\beta: X^\circ \to Y^\circ$ may be extended to a morphism from $X$ to $Y$ if and only if:
\begin{equation} \label{continu}
 \text{ For every $M \in \N$ there is an $N \in \N$ so that } \beta \left( \bigcup_{n=N}^\infty X_n \right) \subseteq \bigcup_{n=M}^\infty Y_n.
\end{equation}

\begin{define} Let $X$ and $Y$ be infinite bullseye spaces with rings $X_n$ and $Y_n$. Put $x(n)=|X_n|$ and $y(n)=|Y_n|$.  We define a numerical function $\mu$ on $\N$ via
\begin{equation*}
\mu(N)=\min \left\{ M \in \N \mid \sum_{n=1}^{N-1} x(n) \leq \sum_{n=1}^{M} y(n) \right\}.
\end{equation*}
\end{define}
Since $Y$ is infinite, $\mu$ is defined.  Since $X$ is infinite, we have $\lim_{N \to \infty} \mu(N)= \infty$.

Note that for all $N$,
\begin{equation} \label{fill}
\sum_{n=1}^{\mu(N)-1} y(n) \leq \sum_{n=1}^{N-1} x(n) \leq \sum_{n=1}^{\mu(N)} y(n).
\end{equation}

\begin{prop} \label{reader} Any two infinite bullseye spaces are isomorphic. \end{prop}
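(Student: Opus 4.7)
The plan is to construct an isomorphism by matching elements ring-by-ring via enumerations. Since each $X_n$ is finite and $X$ is infinite, I would choose a bijection $\iota_X \colon \N \to X^\circ$ that first lists the elements of $X_1$, then those of $X_2$, and so on; pick $\iota_Y \colon \N \to Y^\circ$ analogously. Set $\beta(\ast)=\ast$ and $\beta = \iota_Y \circ \iota_X^{-1}$ on $X^\circ$. This is a bijection $X \to Y$; since $X^\circ$ and $Y^\circ$ are discrete, the only issue is continuity at $\ast$ for both $\beta$ and $\beta^{-1}$, i.e.\ the criterion (\ref{continu}).

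For continuity of $\beta$ at $\ast$, I would use the function $\mu$. Let $S_X(n) = \sum_{i=1}^n x(i)$ and $S_Y(m) = \sum_{i=1}^m y(i)$, so that $\iota_X$ carries the integer interval $(S_X(n-1), S_X(n)]$ onto $X_n$, and similarly for $\iota_Y$. The left half of (\ref{fill}) is the inequality $S_Y(\mu(N)-1) \leq S_X(N-1)$, which says that whenever $n \geq N$ and $x \in X_n$, the enumeration index $\iota_X^{-1}(x)$ exceeds $S_Y(\mu(N)-1)$, and hence $\beta(x) \in Y_m$ for some $m \geq \mu(N)$. Since $X$ is infinite, $S_X(N-1) \to \infty$ and therefore $\mu(N) \to \infty$, so given any $M$ I can pick $N$ with $\mu(N) \geq M$ to verify (\ref{continu}) for $\beta$.

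Continuity of $\beta^{-1}$ follows by the symmetric argument with the roles of $X$ and $Y$ reversed, now using that $Y$ is infinite. I do not expect any real obstacle: the proposition is essentially the combinatorial statement encoded by $\mu$, and the only care required is keeping ring indices and enumeration indices properly aligned. It is worth observing that the argument makes no use of the internal orderings chosen within each ring $X_n$ or $Y_m$, which mirrors the fact that the bullseye topology sees these sets only as discrete finite pieces clustering onto $\ast$.
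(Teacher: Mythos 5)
Your proof is correct and follows essentially the same approach as the paper: you match $X^\circ$ to $Y^\circ$ by an order-respecting enumeration and verify continuity at $\ast$ via the function $\mu$ and inequality (\ref{fill}). The paper phrases this as an inductive construction of partial injections $\beta_N$ rather than fixing explicit enumerations up front, but the resulting map and the verification are the same.
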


\begin{proof}
 By (\ref{fill}) we may inductively construct injections
\begin{equation*}
\beta_N: \bigsqcup_{n=1}^{N-1} X_{n} \to \bigsqcup_{n=1}^{\mu(N)}Y_n
\end{equation*}
so that
\begin{enumerate}
\item
\begin{equation} \label{ichi}
\bigsqcup_{n=1}^{\mu(N)-1}Y_n \subseteq \beta_N \left(\bigsqcup_{n=1}^{N-1} X_{n} \right) \subseteq \bigsqcup_{n=1}^{\mu(N)}Y_n.
\end{equation}
\item $\beta_{N+1}$ restricted to $\bigsqcup_{n=1}^{N-1}X_{n}$ agrees with $\beta_N$.
\end{enumerate}
Write $\beta:X \to Y$ for the limit of these maps, and putting $\beta(\ast)=\ast$; it is an isomorphism.
\end{proof}

The previous proposition underlies the known fact that two functions which are tangent to the identity near a fixed point are homeomorphically equivalent.  To establish the finer results on H\"{o}lder exponents, we specialize the morphisms as follows.  (See Proposition \ref{betaprop}.)

\begin{define} Let $X$ and $Y$ be two bullseye spaces, and $\alpha >0$.  We say that a map $\beta:X \to Y$ with $\beta(\ast)=\ast$ is an $\alpha$-morphism if there exists $k \in \R$ so that if $x \in X_N$ and $\beta(x) \in Y_M$, then $M \geq \alpha N+k$.  Given $\alpha_1,\alpha_2 >0$, we say that $\beta$ is an $(\alpha_1,\alpha_2)$-isomorphism if $\beta$ is a bijection and an $\alpha_1$-morphism, with $\beta^{-1}$ an $\alpha_2$-morphism.   \end{define}

We record some consequences of the definitions.

\begin{lemma} \label{morphlemma}
\begin{enumerate}
\item Any $\alpha$-morphism is a morphism.
\item If $\alpha_1<\alpha_2$, then any $\alpha_2$-morphism is an $\alpha_1$-morphism.
\item If $\beta_1: X \to Y$ is an $\alpha_1$-morphism and $\beta_2: Y \to Z$ is an $\alpha_2$-morphism, then
$\beta_2 \circ \beta_1$ is an $\alpha_1 \alpha_2$-morphism.

\end{enumerate}
\end{lemma}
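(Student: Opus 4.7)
The plan is to verify each of the three claims directly from the definition of an $\alpha$-morphism. No serious obstacle is expected: each claim reduces to elementary arithmetic with the defining inequality $M \geq \alpha N + k$, together with the observation that the condition is imposed only when both $x \in X_N$ and $\beta(x) \in Y_M$, so images landing at $\ast$ are handled vacuously.

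For (i), I will unpack the continuity criterion (\ref{continu}): given $M \in \N$ I must produce $N$ so that $x \in X_n$ with $n \geq N$ forces $\beta(x)$ into $\{\ast\} \cup \bigcup_{m=M}^{\infty} Y_m$. If $\beta(x) = \ast$ there is nothing to check; otherwise $\beta(x) \in Y_m$ with $m \geq \alpha n + k$, so it suffices to choose any integer $N \geq (M - k)/\alpha$, which makes $m \geq \alpha N + k \geq M$.

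For (ii), the constant $k$ furnished by the $\alpha_2$-hypothesis will serve verbatim for the $\alpha_1$-conclusion. Since $\alpha_1 < \alpha_2$ and $N \geq 1$, we have $\alpha_2 N \geq \alpha_1 N$, and hence $M \geq \alpha_2 N + k \geq \alpha_1 N + k$.

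For (iii), I will chase the two constants through the composition. Let $k_1, k_2$ be the constants witnessing that $\beta_1$ and $\beta_2$ are $\alpha_1$- and $\alpha_2$-morphisms. Given $x \in X_N$ with $(\beta_2 \circ \beta_1)(x) \in Z_M$, the intermediate value $\beta_1(x)$ cannot equal $\ast$ (else $(\beta_2 \circ \beta_1)(x) = \beta_2(\ast) = \ast \notin Z^\circ$), so $\beta_1(x) \in Y_P$ for some $P$. Then $P \geq \alpha_1 N + k_1$ and $M \geq \alpha_2 P + k_2$; combining these and using $\alpha_2 > 0$ yields $M \geq \alpha_1 \alpha_2 N + (\alpha_2 k_1 + k_2)$, so that $k = \alpha_2 k_1 + k_2$ is the required constant for $\beta_2 \circ \beta_1$. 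The only bookkeeping wrinkle throughout is the possibility that an image is $\ast$, but this is vacuous for the $\alpha$-morphism condition and needs no further argument.
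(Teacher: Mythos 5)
Your proof is correct and follows essentially the same route as the paper, which proves item (i) by the same choice of $N$ and declares items (ii) and (iii) "immediate." You are in fact slightly more careful than the paper in explicitly accounting for the possibility that $\beta$ sends a non-$\ast$ point to $\ast$ (so that the defining inequality is vacuous), a subtlety the paper glosses over.
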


\begin{proof} For the first item, let $M \in \N$.  Pick $N \in \N$ so that $\alpha N+k \geq M$.  Then 
\begin{equation}
\beta \left( \bigcup_{n=N}^\infty X_n \right) \subseteq \bigcup_{n=M}^\infty Y_n.
\end{equation}
Thus $\beta$ is a morphism.  The other items are immediate.
\end{proof}

We are naturally concerned with the case of $0 < \alpha \leq 1$.  The following proposition gives a numerical criterion for when there exists an $(\alpha_1,\alpha_2)$-isomorphism:

\begin{prop} \label{positive} If there are $k_1,k_2 \in \R$ so that
 \begin{equation}
 \alpha_1 N+k_1 \leq \mu(N) \leq \alpha_2^{-1}N +k_2,
    \end{equation}
 for all $N$ sufficiently large, then there is an $(\alpha_1,\alpha_2)$-isomorphism from $X$ to $Y$.
\end{prop}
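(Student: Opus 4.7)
The plan is to revisit the inductive construction from Proposition~\ref{reader} and verify that, under the numerical hypotheses given, it automatically produces the required morphism bounds. In that construction one builds a nested sequence of injections
\[
\beta_N: \bigsqcup_{n=1}^{N-1} X_n \to \bigsqcup_{n=1}^{\mu(N)} Y_n
\]
whose image contains $\bigsqcup_{n=1}^{\mu(N)-1} Y_n$; the limit $\beta$ is a bijection $X^\circ \to Y^\circ$, which I extend by $\beta(\ast)=\ast$. Inequality (\ref{fill}) ensures that, at each step, there is exactly enough room in $Y$ to accommodate the next $x(N)$ points of $X$ while completing at least one more ring of $Y$.

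The key bookkeeping observation is a ``tracking lemma'': for each $x \in X_N$, the point $\beta(x)$ is placed at step $N \to N+1$, and therefore lands in
\[
\bigsqcup_{n=1}^{\mu(N+1)} Y_n \;\setminus\; \bigsqcup_{n=1}^{\mu(N)-1} Y_n \;=\; \bigsqcup_{n=\mu(N)}^{\mu(N+1)} Y_n.
\]
Equivalently, if $x \in X_N$ and $\beta(x) \in Y_M$, then $\mu(N) \le M \le \mu(N+1)$. I would prove this by strong induction on $N$, using the fact that $\beta_N$ already exhausts $\bigsqcup_{n=1}^{\mu(N)-1}Y_n$ and that only $x(N)$ new values are added at step $N+1$.

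Given the tracking lemma, the two inequalities in the hypothesis translate directly into the two morphism conditions. From $M \ge \mu(N) \ge \alpha_1 N + k_1$ (valid once $N$ is sufficiently large) I conclude that $\beta$ is an $\alpha_1$-morphism, after absorbing the finitely many exceptional small $N$ into the constant. From $M \le \mu(N+1) \le \alpha_2^{-1}(N+1) + k_2$ I rearrange to $N \ge \alpha_2 M - (1 + \alpha_2 k_2)$, which shows that $\beta^{-1}$ is an $\alpha_2$-morphism. By Lemma~\ref{morphlemma}(i) both are morphisms of bullseye spaces, so $\beta$ is the desired $(\alpha_1, \alpha_2)$-isomorphism.

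The only real obstacle is making the inductive step of the construction airtight: I must partition the $x(N)$ new points of $X_N$ into a piece that finishes filling the rings $Y_{\mu(N)}, \dots, Y_{\mu(N+1)-1}$ and a leftover piece that I drop into $Y_{\mu(N+1)}$. Inequality (\ref{fill}), applied at both $N$ and $N+1$, guarantees that the required counts are nonnegative and fit, so the partition exists; everything else is routine.
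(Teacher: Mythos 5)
Your proof is correct and takes essentially the same approach as the paper's: both isolate the key fact that the $\beta$ constructed in Proposition~\ref{reader} sends each $x\in X_N$ into $\bigsqcup_{n=\mu(N)}^{\mu(N+1)} Y_n$, and then read the two morphism bounds directly off the numerical hypothesis. You are a bit more careful than the paper about the shift from $N$ to $N+1$ in the upper bound, but that only adjusts the additive constant.
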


\begin{proof} Recall the homeomorphism $\beta$ from Proposition \ref{reader}.
Suppose that $N$ is sufficiently large.  Let $x \in X_N$ and $\beta(x) \in Y_M$.  By the construction of $\beta$, we have $\mu(N) \leq M \leq \mu(N+1)$. By hypothesis we have, then,
\begin{equation*}
\alpha_1N+k_1 \leq M \leq \alpha_2^{-1}N+k_2.
\end{equation*}
The result follows.
\end{proof}

\begin{lemma} \label{HHotel}
Let $X$ be a bullseye space and $F$ a finite subset.  Let $X_-=X-F$.  We view $X_-$ as a bullseye space with rings given by $(X_-)_i=X_i-F$.  Then there is a (1,1)-isomorphism $\xi: X \isom X_-$. \end{lemma}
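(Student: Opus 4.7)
The plan is to invoke Proposition \ref{positive} with $\alpha_1 = \alpha_2 = 1$. First I arrange the reductions: since the designated point $\ast$ is by definition preserved in passing to $X_-$, I may assume $F \subseteq X^\circ$, and then choose $K$ with $F \subseteq X_1 \cup \cdots \cup X_K$. Setting $x(n) = |X_n|$ and $y(n) = |(X_-)_n|$, one has $y(n) = x(n) - |F \cap X_n|$, so $\sum_{n=1}^{M} y(n) = \sum_{n=1}^{M} x(n) - |F|$ whenever $M \geq K$. Also, $X$ is necessarily infinite (its rings must be nonempty for all $n$ sufficiently large, so removing finitely many points still leaves infinitely many), so there exists $n_0$ with $x(n) \geq 1$ for every $n \geq n_0$.

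Next I estimate the numerical function $\mu$ associated to $X$ and $X_-$. The lower bound $\mu(N) \geq N - 1$ is immediate from $y(n) \leq x(n)$, since any $M$ with $\sum_{n=1}^{M} y(n) \geq \sum_{n=1}^{N-1} x(n)$ must satisfy $\sum_{n=1}^{M} x(n) \geq \sum_{n=1}^{N-1} x(n)$. For the upper bound, I take $N \geq \max(K+1, n_0)$, set $M = N + |F| - 1$, and check the defining inequality for $\mu(N)$:
\begin{equation*}
\sum_{n=1}^{M} y(n) - \sum_{n=1}^{N-1} x(n) = \sum_{n=N}^{M} x(n) - |F| \geq (M - N + 1) - |F| = 0,
\end{equation*}
so $\mu(N) \leq N + |F| - 1$.

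With these two bounds in place, Proposition \ref{positive} applied with $\alpha_1 = \alpha_2 = 1$, $k_1 = -1$, and $k_2 = |F| - 1$ produces the desired $(1,1)$-isomorphism $\xi: X \isom X_-$. I do not expect any real obstacle here: the argument is essentially a Hilbert-Hotel shift encoded numerically through $\mu$. The one delicate ingredient is the lower bound $x(n) \geq 1$ for all large $n$, which is precisely the ``nonempty for $n$ sufficiently large'' clause in the definition of a bullseye space and is what prevents $\mu$ from shifting by more than a constant under removal of a finite set.
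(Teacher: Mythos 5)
Your proof is correct and follows the same route as the paper's: establish $N-1 \le \mu(N) \le N+k$ for some constant $k$, then invoke Proposition \ref{positive} with $\alpha_1=\alpha_2=1$; you have simply filled in the bookkeeping that the paper calls ``easy to see.'' Minor remark: the paper's proof says the conclusion follows from Lemma \ref{morphlemma}, but the result actually used is Proposition \ref{positive}, so your citation is the more accurate one.
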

\begin{proof}
It is easy to see that $N-1 \leq \mu(N) \leq N+k$ for some $k$.  The conclusion follows from Lemma \ref{morphlemma}.
\end{proof}

\begin{comment}

\begin{lemma} \label{shift}
Let $X$ be a bullseye space.  Fix $s \in \N$.  Let $X[s]$ be the bullseye space with rings $(X[s])_i=X_{i+s}$.  Then there is a $(1,1)$-isomorphism from $X$ to $X[s]$.
\end{lemma}

\begin{proof}
Let $F= \bigcup_{i=0}^{s-1} X_i$ and $X_-=X-F$.  By Lemma \ref{HHotel}, it is enough to show there is a $(1,1)$-isomorphism $\xi: X_- \isom X[s]$.  For this, use the evident bijections
\[ (X_-)_{i+s} \isom X[s]_i. \]
\end{proof}

\end{comment}

The next proposition gives a numerical criterion for when there does {\it not} exist a $(\alpha_1,\alpha_2)$-isomorphism:

\begin{prop} \label{fractions} Suppose that there are numbers $a,b$ so that for all $N$ sufficiently large,
\begin{equation} \label{Black Beauty}
 \sum_{n=1}^{bN} x(n) < \sum_{n=1}^{aN} y(n).
\end{equation}
Let $\alpha>\frac{a}{b}$. Then there does not exist a surjective $\alpha$-morphism $\beta: X \to Y$.
\end{prop}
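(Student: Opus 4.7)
The plan is to argue by contradiction: suppose we have a surjective $\alpha$-morphism $\beta \colon X \to Y$ with associated constant $k$, so that $x \in X_N$ and $\beta(x) \in Y_M$ force $M \geq \alpha N + k$. The key combinatorial input is that $\beta$ maps finite pieces of $X$ onto finite pieces of $Y$, and the $\alpha$-morphism condition controls which pieces map where. So the strategy is to count how much of $X$ is required in order for $\beta$ to cover the first $M$ rings of $Y$, and then specialize $M$ to exploit hypothesis (\ref{Black Beauty}).

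First I would observe that, by the definition of an $\alpha$-morphism, any point of $X_n$ whose image lies in $Y_1 \sqcup \cdots \sqcup Y_M$ must satisfy $\alpha n + k \leq M$, i.e.\ $n \leq (M-k)/\alpha$. Hence
\begin{equation*}
\beta^{-1}\!\left( \bigsqcup_{n=1}^{M} Y_n \right) \subseteq \bigsqcup_{n=1}^{\lfloor (M-k)/\alpha \rfloor} X_n.
\end{equation*}
Surjectivity of $\beta$ then yields the cardinality inequality
\begin{equation*}
\sum_{n=1}^{M} y(n) \;\leq\; \sum_{n=1}^{\lfloor (M-k)/\alpha \rfloor} x(n).
\end{equation*}

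Next I would specialize $M = \lfloor aN \rfloor$ (interpreting the hypothesis with the analogous floor convention). Since $\alpha > a/b$, we have $\alpha b > a$, so $(aN - k)/\alpha \leq bN$ once $N$ is large enough to absorb the constant $k$. Consequently $\lfloor (M-k)/\alpha \rfloor \leq bN$, and the displayed inequality becomes
\begin{equation*}
\sum_{n=1}^{aN} y(n) \;\leq\; \sum_{n=1}^{bN} x(n)
\end{equation*}
for all sufficiently large $N$. This directly contradicts hypothesis (\ref{Black Beauty}), completing the argument.

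The main obstacle, such as there is one, is purely bookkeeping: one has to be careful with the floor functions and with absorbing the additive constant $k$ into the strict inequality $\alpha > a/b$, which is precisely where the strict inequality is used (an $\alpha$-morphism condition with equality $\alpha = a/b$ would fail to give the needed slack). Everything else is a direct cardinality count using the definition of $\alpha$-morphism and surjectivity.
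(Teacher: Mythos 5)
Your argument is correct and rests on exactly the same ingredients as the paper's proof: the $\alpha$-morphism bound confines the $\beta$-preimage of the first $aN$ rings of $Y$ to low rings of $X$, surjectivity converts this into a cardinality inequality, and the strict inequality $\alpha b > a$ provides the slack needed to absorb the constant $k$ for large $N$, contradicting (\ref{Black Beauty}). The only cosmetic difference is organizational: the paper extracts a single witness $x_N$ lying outside the first $bN$ rings of $X$ but mapping into the first $aN$ rings of $Y$, and applies the $\alpha$-morphism inequality to that one element, whereas you run the count globally and land directly on the reversed inequality $\sum_{n=1}^{aN} y(n) \leq \sum_{n=1}^{bN} x(n)$. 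Both are the same argument; yours is, if anything, slightly more streamlined since it avoids choosing a witness.
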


\begin{proof}  Suppose $\beta: X \to Y$ is a surjective morphism.  Let $N$ be sufficiently large.  Then by the inequality (\ref{Black Beauty}), there exists $x_N \notin \cup_{i=1}^{bN}X_i$, but $\beta(x_N) \in \cup_{j=1}^{aN}Y_j$.
So $x_N \in X_i$ with $i>bN$, and $\beta(x_N) \in Y_j$ with $j \leq aN$.  If there were a constant $k$ so that $j \geq \alpha i-k$ for all $x_N$, then we would have
\begin{equation*}
aN \geq \alpha bN-k
\end{equation*}
for all $N$ sufficiently large.  Since $\alpha b >a$ this is impossible.
\end{proof}

\begin{prop} \label{constants} Let $n_0,x_0,y_0 \in \N$.  Suppose that for $n \geq n_0$, $x(n)=x_0$ and $y(n)=y_0$.  Then there exists an $(\frac{x_0}{y_0},\frac{y_0}{x_0})$-isomorphism $\beta:X \to Y$.  If $\alpha>\frac{x_0}{y_0}$, then there does not exist a surjective $\alpha$-morphism $\beta: X \to Y$.
\end{prop}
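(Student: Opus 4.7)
The plan is to reduce both statements to the already-proven Propositions \ref{positive} and \ref{fractions} by computing the function $\mu(N)$ explicitly (up to bounded error) under the eventually-constant hypothesis.

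First, I would estimate $\mu(N)$. For $N$ large enough so that $N-1 \geq n_0$ and $\mu(N) \geq n_0$, the partial sums telescope as
\[ \sum_{n=1}^{N-1} x(n) = A + (N-n_0)x_0 \quad\text{and}\quad \sum_{n=1}^{M} y(n) = B + (M-n_0+1)y_0, \]
where $A=\sum_{n=1}^{n_0-1} x(n)$ and $B=\sum_{n=1}^{n_0-1} y(n)$ are constants. The defining inequality for $\mu(N)$ then reduces to a linear condition on $M$, yielding a constant $k$ (depending only on $A, B, x_0, y_0, n_0$) such that
\[ \tfrac{x_0}{y_0} N + k \leq \mu(N) \leq \tfrac{x_0}{y_0} N + k + 1 \]
for all sufficiently large $N$. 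Applying Proposition \ref{positive} with $\alpha_1 = \tfrac{x_0}{y_0}$ and $\alpha_2 = \tfrac{y_0}{x_0}$ (so that $\alpha_2^{-1} = \tfrac{x_0}{y_0}$) immediately produces the desired $(\tfrac{x_0}{y_0}, \tfrac{y_0}{x_0})$-isomorphism.

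For the non-existence claim, suppose $\alpha > \tfrac{x_0}{y_0}$. Using density of the rationals I would pick positive integers $a, b$ with $\tfrac{x_0}{y_0} < \tfrac{a}{b} < \alpha$, i.e., $b x_0 < a y_0$. Using the same telescoping computation as before,
\[ \sum_{n=1}^{bN} x(n) = bN\, x_0 + O(1) \quad\text{and}\quad \sum_{n=1}^{aN} y(n) = aN\, y_0 + O(1), \]
so the strict inequality $b x_0 < a y_0$ forces $\sum_{n=1}^{bN}x(n) < \sum_{n=1}^{aN}y(n)$ for all sufficiently large $N$. Proposition \ref{fractions} then rules out any surjective $\alpha$-morphism, since $\alpha > \tfrac{a}{b}$.

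The only mildly delicate point is keeping track of constants carefully enough to show that the error term in $\mu(N)$ really is bounded (as opposed to $o(N)$, which would not suffice for Proposition \ref{positive}). This follows directly from the fact that both sequences are eventually \emph{exactly} constant, so no limiting arguments are needed — the constants $k$, $A$, and $B$ are genuinely absolute. Once that bookkeeping is in place, both halves of the proposition are just applications of the criteria already established.
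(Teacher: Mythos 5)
Your proof is correct and follows essentially the same route as the paper: compute $\mu(N)=\tfrac{x_0}{y_0}N+O(1)$ and then apply Propositions \ref{positive} and \ref{fractions}. The only cosmetic difference is that the paper first reduces to $n_0=1$ via Lemma \ref{HHotel}, which yields the exact formula $\mu(N)=\lceil (N-1)x_0/y_0\rceil$, whereas you carry the boundary constants $A,B$ through directly; both are valid.
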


\begin{proof}
By Lemma \ref{HHotel} we may assume $n_0=1$.  Note that
\begin{equation*}
\mu(N)=\left \lceil \frac{(N-1)x_0}{y_0} \right \rceil,
\end{equation*}
where by $\lceil x \rceil$ we denote that smallest integer greater or equal to $x$.  Then Proposition \ref{positive} gives the first statement.
For the second statement, pick a rational number $\frac{a}{b}$ with $\alpha > \frac{a}{b} >\frac{x_0}{y_0}$.  Thus $ay_0>bx_0$, and now Proposition \ref{fractions} gives the result.
\end{proof}

\begin{prop} \label{mixed} Let $n_0,x_0,y_0,r \in \N$, with $r \geq 2$.  Suppose that for $n \geq n_0$,  $x(n)=x_0$ and $y(n)=y_0 r^n$.  Then there does not exist a surjective $\alpha$-morphism $\beta: X \to Y$ for any $\alpha>0$. \end{prop}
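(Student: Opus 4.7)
The plan is to reduce to Proposition \ref{fractions} by exploiting the exponential growth of $y(n)$. Given any $\alpha > 0$, I would pick positive integers $a, b$ with $0 < a/b < \alpha$ --- for concreteness, $a = 1$ and any integer $b > 1/\alpha$. The goal is then to verify hypothesis (\ref{Black Beauty}) for this choice and for all sufficiently large $N$, after which Proposition \ref{fractions} delivers nonexistence of the desired morphism.

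The verification is a direct comparison of growth rates. For $N$ with $bN \geq n_0$, the partial sum $\sum_{n=1}^{bN} x(n)$ is bounded above by $C + x_0 \cdot bN$, a linear function of $N$, where $C$ absorbs the contribution of the finitely many terms with $n < n_0$ (each finite since $X_n$ is finite by definition). On the other hand, once $aN \geq n_0$, one has
\[
\sum_{n=1}^{aN} y(n) \;\geq\; y(aN) \;=\; y_0\, r^{aN},
\]
which grows exponentially in $N$, since $r \geq 2$ and $a \geq 1$. Exponential growth eventually dominates linear growth, so for $N$ sufficiently large the inequality $\sum_{n=1}^{bN} x(n) < \sum_{n=1}^{aN} y(n)$ holds. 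Proposition \ref{fractions}, applied with these $a, b$ and the given $\alpha > a/b$, then shows there is no surjective $\alpha$-morphism $\beta : X \to Y$.

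Since the parameters $a, b$ were chosen to accommodate an arbitrary $\alpha > 0$, the conclusion holds for every $\alpha > 0$. There is no real obstacle here: the whole content of the argument is the simple observation that because $y(n)$ grows exponentially while $x(n)$ is eventually constant, one can arrange the comparison hypothesis of Proposition \ref{fractions} with $a/b$ as small as one wishes, and so no positive H\"older-type exponent $\alpha$ is compatible with a surjective morphism.
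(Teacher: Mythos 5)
Your proof is correct and follows essentially the same route as the paper's: both choose $a=1$ and $b$ with $\alpha>1/b$, verify the hypothesis of Proposition \ref{fractions} by noting that the $x$-side grows linearly in $N$ while the $y$-side grows exponentially, and then invoke that proposition. The only cosmetic difference is that you absorb the finitely many terms with $n<n_0$ into a constant directly, whereas the paper first reduces to $n_0=1$ via Lemma \ref{HHotel}; both are fine.
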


\begin{proof}
Again we may assume $n_0=1$.  Say $\alpha> \frac{1}{b}$ for some $b \in \N$.  Set $a=1$.  Considering $N$ large enough so that
\begin{equation*}
x_0bN<y_0r^N < y_0 \sum_{n=1}^{N}r^n,
\end{equation*}
we apply Proposition \ref{fractions}.
\end{proof}

\begin{prop} \label{harder} Let $n_0,\lambda,\nu,x_0,y_0,r$ be natural numbers with $1 < \lambda \leq \nu $ and $r \geq 2$.  Suppose that for $n \geq n_0$, $x(n)=x_0r^{\nu n}$ and $y(n)=y_0 r^{\lambda n}$.  Then then there exists a $(1,\frac{\lambda}{\nu})$-isomorphism $\beta:X \isom Y$.  If $\alpha>\frac{\lambda}{\nu}$, then there does not exist a surjective $\alpha$-morphism $\beta: Y \to X$. \end{prop}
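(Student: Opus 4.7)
My plan is to determine $\mu(N)$ up to a bounded additive error from the two geometric-series formulas, and then feed this estimate into the numerical criteria of Propositions \ref{positive} and \ref{fractions}. As a preliminary step, Lemma \ref{HHotel} together with Lemma \ref{morphlemma}(iii) lets me discard the finitely many rings indexed by $n < n_0$ without changing the class of $(\alpha_1, \alpha_2)$-isomorphisms available, so I assume throughout that $x(n) = x_0 r^{\nu n}$ and $y(n) = y_0 r^{\lambda n}$ for every $n \geq 1$.

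For the existence of a $(1, \lambda/\nu)$-isomorphism, I would evaluate the geometric sums
\[
\sum_{n=1}^{N-1} x(n) = \frac{x_0(r^{\nu N} - r^\nu)}{r^\nu - 1}, \qquad \sum_{n=1}^{M} y(n) = \frac{y_0(r^{\lambda(M+1)} - r^\lambda)}{r^\lambda - 1},
\]
and compare leading exponentials in the sandwich (\ref{fill}) defining $\mu$. After taking logarithms in base $r$, this forces $\lambda \mu(N) = \nu N + O(1)$, whence $\mu(N) = \frac{\nu}{\lambda} N + O(1)$. Since $\nu \geq \lambda$, one has $\frac{\nu}{\lambda} \geq 1$, so there exist constants $k_1, k_2$ with $N + k_1 \leq \mu(N) \leq \frac{\nu}{\lambda} N + k_2$ for all $N$ large enough. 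Proposition \ref{positive} with $\alpha_1 = 1$ and $\alpha_2 = \lambda/\nu$ (so that $\alpha_2^{-1} = \nu/\lambda$) then produces the required $(1, \lambda/\nu)$-isomorphism.

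For the non-existence half, given $\alpha > \lambda/\nu$, I pick positive integers $a, b$ with $\lambda/\nu < a/b < \alpha$, so that $\nu a > \lambda b$. Summing the geometric series again shows that, for all large $N$, $\sum_{n=1}^{bN} y(n)$ is a bounded constant multiple of $r^{\lambda b N}$, while $\sum_{n=1}^{aN} x(n)$ is at least a positive constant multiple of $r^{\nu a N}$. Since $\nu a > \lambda b$, the first sum is eventually strictly smaller than the second, and Proposition \ref{fractions} applied with the roles of $X$ and $Y$ swapped (and our choice of $a, b$) rules out any surjective $\alpha$-morphism $Y \to X$.

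The calculations are routine once the sums are written in closed form; the only point needing a little care is the boundary case $\nu = \lambda$, in which $\mu(N) = N + O(1)$ rather than growing faster than $N$. But the $O(1)$ error is uniformly bounded below, so a suitable (possibly negative) $k_1$ makes $N + k_1 \leq \mu(N)$ hold for all large $N$, and Proposition \ref{positive} applies without modification.
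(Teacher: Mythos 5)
Your proof is correct and follows essentially the same route as the paper's: reduce to $n_0 = 1$ via Lemma \ref{HHotel}, pin down $\mu(N)$ to within $O(1)$ by summing the two geometric series, feed the resulting sandwich $N + k_1 \leq \mu(N) \leq \frac{\nu}{\lambda}N + k_2$ into Proposition \ref{positive}, and then choose $a/b$ strictly between $\lambda/\nu$ and $\alpha$ to invoke Proposition \ref{fractions} for the non-existence half. Your explicit remark about allowing a (possibly negative) $k_1$ in the case $\nu = \lambda$ is in fact a small sharpening of the paper's argument, which asserts $N \leq \mu(N)$ outright; that bound can fail when $\nu = \lambda$ and $x_0 < y_0$, but the weaker bound you use is all Proposition \ref{positive} needs.
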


\begin{proof}
As before we may assume $n_0=1$.

  For the first statement, we need estimates on $\mu(N)$. For the upper bound we argue that there is a $k_2 \in \R$ so that $\mu(N) \leq \frac{\nu}{\lambda}+k_2$ for $N$ large. By summing the geometric series, we have
  \begin{equation*}
  \mu(N)= \min \left(M \mid x_0 \frac{r^{N\nu}-r^{\nu}}{r^\nu-1} \leq y_0 \frac{r^{M \lambda}-r^\lambda}{r^\nu-1} \right).
  \end{equation*}
  The inequality may be written as
  \begin{equation}
  C_1 r^{N \nu}+D_1 \leq C_2 r^{M \lambda}+ D_2
  \end{equation}
  for constants $C_1,C_2,D_1,D_2$ with $C_1$ and $C_2$ positive. If we consider integers $M$ of the form $M= \frac{\nu}{\lambda}N+k_2$, and put $R=r^\nu$ and $k=k_2 \lambda$ this is simply
  \begin{equation}
  C_1 R^N+D_1 \leq C_2 R^{N+k}+D_2.
  \end{equation}
  Clearly we can find $k$ large enough so that this is satisfied for large $N$.

  For the lower bound, note that for $N$ large we have
  \begin{equation*}
  C_2 r^{(N-1)\lambda}+D_2 < C_1 r^{N \nu}+D_1.
\end{equation*}
Thus we have $N \leq \mu(N) \leq \frac{\nu}{\lambda}N+k_2$ for $N$ sufficiently large, and the conclusion follows from Proposition \ref{positive}.

For the second statement, pick a rational number $\frac{a}{b}$ with $\alpha> \frac{a}{b} > \frac{\lambda}{\nu}$.  Then $a \nu > \lambda b$.  We claim that for $N >>0$,
\begin{equation*}
\sum_{n=1}^{bN} y(n) < \sum_{n=1}^{aN} x(n),
\end{equation*}
whence the second statement.  Summing the geometric series we must show
\begin{equation*}
y_0 \cdot \frac{r^{\lambda(bN+1)}-1}{r^\lambda-1} < x_0 \cdot\frac{r^{\nu(aN+1)}-1}{r^\nu-1}.
\end{equation*}
To clarify the situation, let $R_1=r^{\lambda b}$ and $R_2=r^{\nu a}$.  Note that $1<R_1<R_2$.  Then the inequality follows from the general fact that given constants $C_1,C_2,D_1,D_2$, with $C_1,C_2$ positive, we have
\begin{equation*}
C_1R_1^N+D_1 < C_2 R_2^N+D_2
\end{equation*}
for $N >>0$.
\end{proof}

\subsection{Application}

Suppose in this section that for $i=1,2$, we have neighborhoods $U_i$ of $0 \in \Qp$ and homeomorphisms $f_i: U_i \isom U_i$ satisfying the following conditions:
\begin{enumerate}
\item $f_i(0)=0$.
\item Each $U_i$ is $f_i$-invariant.
\item The functions $f_i$ are norm-preserving on the $U_i$.
\item For $x \in U_i$, the function $\varphi: \Z \to U_i$ given by $\varphi(z)=f^{\circ z}(x)$ is Lipschitz (giving $\Z$ the $p$-adic metric).  We therefore have $\Zp$-orbits of $f$ in $U_i$.
\item \label{open} Each $\Zp$-orbit of $f$ in $U_i-\{0 \}$ is open.
\item The $U_i$ are bounded.
\end{enumerate}

Condition (iv) is equivalent to the condition that there are only finitely many $\Zp$-orbits of $f$ in each circle of constant radius.  This is because each circle is compact and $\Zp$-orbits are necessarily compact, thus closed.

 For $U \subseteq \Qp$, let $[U]$ denotes the set of $\Zp$-orbits $\orb$ in $X$.  Let $n_i= \min \{n|\pp^n \cap U_i \neq \phi \}-1$.  Set $X=[U_1]$, $Y=[U_2]$; then $X$ and $Y$ are bullseye spaces with rings $X_i=[U_1 \cap p^{n_1+i}\Zp^\times]$, and $Y_j=[U_2 \cap p^{n_2+j} \Zp^\times]$.

\begin{define} Given a homeomorphism $h: U_1 \isom U_2$ which intertwines $f_1$ and $f_2$, define the quotient map $\beta=\beta(h): X \isom Y$ via $\beta(\orb)=h(\orb)$. \end{define}

Since $h(0)=0$ we have $\beta(\ast)=\ast$.

\begin{prop} \label{betaprop} This defines an isomorphism $\beta(h): X \isom Y$.  Conversely, any isomorphism $\beta: X \isom Y$ is the quotient $\beta=\beta(h)$ of a homeomorphism $h: U_1 \isom U_2$ intertwining $f_1$ and $f_2$.  Moreover, $\beta(h)$ is an $\alpha$-morphism if and only if there is a constant $C$ so that
\begin{equation} \label{Holder}
|h(x)| \leq C |x|^\alpha
\end{equation}
for $x \in U_1$.
\end{prop}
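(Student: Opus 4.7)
The plan is to establish the three assertions in turn, using two bridges: condition (iii) forces the norm to be constant on each orbit, so an orbit $\orb \subseteq U_1$ lies in a unique ring $X_N$ (the one with $|y|=p^{-n_1-N}$ for all $y\in\orb$) and similarly for $U_2$; and condition (iv) identifies each $\Zp$-orbit as the closure in $U_i$ of the $\W$-iterate sequence $\{ f_i^{\circ n}(x) \}$, in the spirit of Proposition~\ref{phis}.

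For the forward assertion, I would first check that $\beta(h)$ maps orbits to orbits: the intertwining relation $h\circ f_1 = f_2\circ h$ sends $\W$-iterate sequences to $\W$-iterate sequences, and then continuity of $h$ sends closures to closures, so $h(\orb)$ is a $\Zp$-orbit of $f_2$; bijectivity is immediate from $\beta(h^{-1})$ being a two-sided inverse. Continuity at $\ast$ reduces to the following: given $M$, continuity of $h$ at $0$ with $h(0)=0$ yields $N_0$ such that $|x|\le p^{-n_1-N_0}$ implies $|h(x)|\le p^{-n_2-M}$; this translates directly to $\beta(h)$ carrying $\{\ast\}\cup\bigcup_{N\ge N_0} X_N$ into $\{\ast\}\cup\bigcup_{j\ge M} Y_j$.

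For the converse, given $\beta$ I would construct $h$ orbit by orbit. For each $\orb\in X^\circ$ pick base points $x_\orb\in\orb$ and $y_\orb\in\beta(\orb)$ and set $h(f_1^{\circ z}(x_\orb))=f_2^{\circ z}(y_\orb)$ for $z\in\Zp$. By condition (iv), each orbit carries a natural parameterization by $\Zp$ in the style of Proposition~\ref{phis}, and this prescription yields a homeomorphism $\orb\isom\beta(\orb)$ intertwining $f_1$ with $f_2$. Condition (v) makes each orbit clopen in $U_i\setminus\{0\}$, so these fit together into a homeomorphism $U_1\setminus\{0\}\isom U_2\setminus\{0\}$; setting $h(0)=0$, continuity at $0$ follows by reversing the argument of the previous paragraph and invoking continuity of $\beta$ at $\ast$.

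For the final equivalence, if $\beta(h)$ is an $\alpha$-morphism with constant $k$, then for $y\in\orb\subseteq X_N$ we have $|y|=p^{-n_1-N}$ and $|h(y)|=p^{-n_2-M}\le p^{-n_2-k-\alpha N}=p^{\alpha n_1-n_2-k}|y|^\alpha$, which is (\ref{Holder}) with $C=p^{\alpha n_1-n_2-k}$. The converse reads (\ref{Holder}) as $p^{-n_2-M}\le Cp^{-\alpha(n_1+N)}$ and extracts $M\ge \alpha N + k'$ with $k'=\alpha n_1 - n_2 - \log_p C$. The step I expect to need the most care is the continuity of $h$ at $0$ in the constructive direction: off the origin, orbits are clopen and patching is trivial, but at the fixed point one genuinely needs conditions (iv) and (v) together so that the bullseye topology records the clustering of orbits near $0$ faithfully, which is precisely the content of $\beta$ being continuous at $\ast$.
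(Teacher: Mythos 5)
Your proof follows essentially the same route as the paper's: the forward direction transfers continuity of $h$ at $0$ to continuity of $\beta(h)$ at $\ast$, the converse constructs $h$ orbit by orbit via $h(f_1^{\circ z}(x_\orb))=f_2^{\circ z}(y_\orb)$ and patches using clopenness of orbits, and the H\"{o}lder equivalence is the same exponent bookkeeping. The only cosmetic difference is that you make explicit the $\Zp$-parameterization of each orbit (invoking Proposition~\ref{phis} in spirit), which the paper leaves implicit; this is a small clarification rather than a different argument.
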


\begin{proof} The continuity of $h$ at $0$ ensures that $\beta(h)$ is continuous, and similarly for $\beta(h)^{-1}$.
In the second statement, we are given an isomorphism $\beta$.  We define $h$ by putting $h(0)=0$ and defining it continuously on each of the disjoint open $\Zp$-orbits in $U_1- \{0 \}$.  Given such an orbit $\orb$, pick any $x \in \orb$ and $y \in h(\orb)$ and then define $h$ on $\orb$ via $h( f_1^{\circ z}(x))=f_2^{\circ z}(y)$, for $z \in \Zp$.
Then condition (\ref{continu}) implies that $h$ is continuous at $0$, and it is clear that this construction intertwines the $f_i$.
Finally, suppose that (\ref{Holder}) holds.  We show that $\beta(h)$ is an $\alpha$-morphism.  Let $\orb \in X_i$, and pick $a \in \orb$.  Then $\beta(\orb) \ni h(a) \in Y_j$.  Equation (\ref{Holder}) translates to
 \begin{equation*}
 p^{-(n_2+j)} \leq p^{-(n_1+i)+\kappa},
 \end{equation*}
 where $\kappa=\log_pC$.  Thus $n_2+j \geq (n_1+i)\alpha-\kappa$, which shows that $\beta$ is an $\alpha$-morphism.  The reverse direction is similar.
 \end{proof}

\begin{prop} \label{topconj}
Suppose that $f_i$ and $U_i$ satisfy the conditions introduced at the beginning of this section.
Then there is a homeomorphism $h: U_1 \isom U_2$ so that $f_2 \circ h=h \circ f_1$ on $U_1$.
\end{prop}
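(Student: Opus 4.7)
The plan is to assemble the proposition out of the bullseye space machinery developed in this section, essentially by checking that the hypotheses make $X = [U_1]$ and $Y = [U_2]$ infinite bullseye spaces and then invoking Propositions \ref{reader} and \ref{betaprop}.

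First I would verify that $X$ and $Y$ genuinely are infinite bullseye spaces. Each $U_i$ is a neighborhood of $0$, so contains $\pp^{n_i + i} \setminus \{0\} = \bigcup_{n \geq n_i+i} p^n \Zp^\times$ for $i$ large enough, which ensures each ring $X_i$ and $Y_j$ is nonempty for all sufficiently large index and that infinitely many such rings are present. Condition (iv), together with the fact that each circle of constant radius in $\Qp$ is compact and $\Zp$-orbits are closed (being continuous images of compact $\Zp$), forces each ring to be a finite set: indeed, the open $\Zp$-orbits partition the compact circle $p^n \Zp^\times \cap U_i$ and therefore only finitely many can appear. Condition (vi) is not strictly needed for this construction but is consistent with it, since $\ast$ corresponds to $0$ and the rings are indexed so that the ``bullseye'' is at the origin.

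Next I would apply Proposition \ref{reader} to produce an isomorphism of bullseye spaces $\beta \colon X \isom Y$. Then I would invoke the second half of Proposition \ref{betaprop}: given $\beta$, define $h(0) = 0$ and, on each $\Zp$-orbit $\orb \subset U_1 \setminus \{0\}$, pick $x \in \orb$ and $y \in \beta(\orb)$ and set
\begin{equation*}
h(f_1^{\circ z}(x)) = f_2^{\circ z}(y) \qquad (z \in \Zp).
\end{equation*}
Condition (v) guarantees that the orbits are open in $U_i \setminus \{0\}$, so this piecewise definition is continuous away from $0$; continuity at $0$ follows because $\beta$ being a morphism translates into the condition (\ref{continu}) which says orbits far from $0$ in $X$ map to orbits far from $0$ in $Y$. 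The inverse is constructed symmetrically from $\beta^{-1}$, so $h$ is a homeomorphism. The intertwining $f_2 \circ h = h \circ f_1$ is immediate from the recipe above, since shifting $z$ by $1$ on the left applies $f_1$ and on the right applies $f_2$.

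The only point requiring real attention is the continuity of $h$ at $0$, and this is entirely packaged into the statement that $\beta$ is a morphism of bullseye spaces; no delicate estimates are needed because (v) makes each non-basepoint ring discrete and there is no matching problem within a ring. Thus the main work is really in the setup and in invoking Propositions \ref{reader} and \ref{betaprop}; no new technical obstacle arises.
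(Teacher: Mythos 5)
Your proof is correct and follows exactly the same route as the paper: the paper's own proof consists precisely of invoking Propositions \ref{reader} and \ref{betaprop}, and your proposal just spells out the details of verifying that $X = [U_1]$ and $Y = [U_2]$ are infinite bullseye spaces before doing so.
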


\begin{proof} This follows from Propositions \ref{reader} and \ref{betaprop}. \end{proof}

\begin{comment} In verifying part (ii) of Proposition \ref{topconj}, the following lemma is helpful.

\begin{lemma}\label{isometric}
Let $f$ be an analytic function tangent to the identity defined locally near $0$. Then, if $x$ is sufficiently small, we have $|f(x)|=|x|$.
\end{lemma}
\begin{proof}
Let $f(x)=x+\sum _{n\geq 2}a_{n}x^{n}$ and write $F(x)=f(x)-x$. By choosing $r$ sufficiently small, we have for all $x$ satisfying $|x|<r$ that $|F(x)|<|x|$. Therefore, $|f(x)|=|x+F(x)|=|x|$.
\end{proof}
\end{comment}

We now have all of the tools necessary to prove Theorem \ref{local Holder continuity}.

\begin{proof} By Theorem \ref{lipeomorphic conjugacy} we may assume $f=f_{m+1,a}$ and $g=f _{m'+1,a'}$.  We may apply Proposition \ref{topconj}; the functions are norm-preserving close to $0$ by the remark after Equation (\ref{flowing}).
Taking $\Zp$-orbits then puts us in the category of bullseye spaces.  By Proposition \ref{counting orbits} there are $x_0 p^{im}$, resp. $y_0 p^{im'}$ orbits in each circle $C(0,p^{-i})$ for positive constants $x_0,y_0$.  Proposition \ref{harder} tells us that our best $\alpha$-morphism is for $\alpha=\frac{m}{m'}$.  Combined with Proposition \ref{betaprop}, this gives the growth condition (\ref{parade}), and the subsequent statement.

Next, suppose that $y=f^{\circ z}(x)$ for $z \in \Zp$.  Then
\begin{equation*}
|y-x|=|azx^{m+1}|
\end{equation*}
by Proposition \ref{Qp-orbits}.
Similarly,
\begin{equation*}
|h(y)-h(x)|=|g^{\circ z}(h(x))-h(x)|=|a'zh(x)^{m'+1}|.
\end{equation*}
Together with the inequality (\ref{parade}) this gives
\begin{equation*}
|h(y)-h(x)| \leq C' |y-x|^{\frac{m}{m+1} \cdot \frac{m'+1}{m'}},
\end{equation*}
for some constant $C'$ which is independent of $x$ and $y$, and $z$.  (Recall $|z| \leq 1$.)
Since
\begin{equation*}
\frac{m}{m+1} \cdot \frac{m'+1}{m'} \geq \frac{m}{m'},
\end{equation*}
this gives (\ref{furrst}).

The other direction is similar.  Now go back and let $C$ be the minimum of all these constants.
\end{proof}

\section{Indifferent Multiplier Maps in $\Qp$} \label{Multipliers}

We now turn to multiplier maps $L_a(x)=ax$ for $a \in \Qp^\times$.
Of course, if $a \neq a'$ then $L_a$ is not analytically equivalent to $L_{a'}$.  On the other hand, any two contracting (resp. expanding, resp. irrational indifferent) multiplier maps are homeomorphically equivalent.  (If $a$ is a primitive $k$th root of unity, then $L_a$ is homeomorphically equivalent to $L_{a'}$ if and only if $a'$ is also a primitive $k$th root of unity.)

In this section we will determine lipeomorphic equivalence of these maps; in particular, we find the best H\"{o}lder estimate at $0$ for a conjugating homeomorphism $h$.  We first treat the simple case of contracting multiplier maps.

\begin{prop} \label{utility}
For $i=1,2$, let $p_i$ be primes, $\pi_i \in p_i \Z_{p_i}- \{ 0 \}$ and write $L_{\pi_i}: \Q_{p_i} \to \Q_{p_i}$ for dilation by $\pi_i$. Then there is a homeomorphism $h: \Q_{p_1} \isom \Q_{p_2}$ so that $h \circ L_{\pi_1} \circ h^{-1}=L_{\pi_2}$.   In fact, we may choose $h$ and a constant $C>0$ with the property that
\begin{equation*}
|h(x)| \leq C|x|^{\frac{\ord(\pi_1)}{\ord(\pi_2)}}.
\end{equation*}
This exponent cannot be improved.
\end{prop}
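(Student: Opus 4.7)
The approach is to use the multiplicative decomposition of Section \ref{decomposition} to reduce the conjugation problem to matching fundamental domains. By Proposition \ref{decomp1}, $\Q_{p_i}^\times \cong \langle \pi_i \rangle \times A_i$ as topological spaces, where $A_i = \{x \in \Q_{p_i} \mid |\pi_i| < |x| \leq 1\}$ is a non-empty compact, metrizable, perfect, totally disconnected space, i.e., a Cantor set. Since any two Cantor sets are homeomorphic, fix a homeomorphism $\phi: A_1 \isom A_2$.

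First, I would define $h: \Q_{p_1} \to \Q_{p_2}$ by $h(0) = 0$ and $h(\pi_1^n \xi) = \pi_2^n \phi(\xi)$ for $n \in \Z$ and $\xi \in A_1$. Proposition \ref{decomp1} makes this well-defined and bijective, and the intertwining $h \circ L_{\pi_1} = L_{\pi_2} \circ h$ is immediate. Continuity of $h$ and $h^{-1}$ on the punctured spaces follows from the product structure; at $0$, note that $|\pi_1^n \xi|_{p_1} \to 0$ forces $n \to +\infty$, whence $|h(\pi_1^n \xi)|_{p_2} = |\pi_2|^n |\phi(\xi)|_{p_2} \to 0$, since $|\phi(\xi)|_{p_2}$ is bounded above on $A_2$.

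Second, for the H\"{o}lder estimate, write $x = \pi_1^n \xi$ with $\xi \in A_1$. Then $|x|_{p_1}$ and $|h(x)|_{p_2}$ differ from $|\pi_1|^n$ and $|\pi_2|^n$ only by factors bounded above and below, since $\xi$ and $\phi(\xi)$ lie in compact fundamental domains. Thus $|h(x)|_{p_2} \leq C |x|_{p_1}^\alpha$ reduces to bounding $(|\pi_2|_{p_2}/|\pi_1|_{p_1}^\alpha)^n$ uniformly in $n \geq 0$; substituting $|\pi_i|_{p_i} = p_i^{-\ord(\pi_i)}$ and matching exponents yields the stated H\"{o}lder bound.

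The main obstacle I expect is the sharpness claim. For \emph{any} conjugating homeomorphism $h$, the intertwining relation iterates to $h(\pi_1^n \xi) = \pi_2^n h(\xi)$, so the sequence of norms along every orbit is rigidly determined by $|\pi_1|_{p_1}$ and $|\pi_2|_{p_2}$. If $\alpha$ strictly exceeded the stated value, then for any single orbit representative $\xi$ the ratio $|h(\pi_1^n\xi)|_{p_2}/|\pi_1^n\xi|_{p_1}^\alpha$ would grow without bound as $n \to +\infty$, contradicting the supposed estimate. Since this argument uses only the intertwining property, it rules out improvement by any homeomorphism, not merely those arising from our construction.
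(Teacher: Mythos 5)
Your construction coincides with the paper's: the fundamental domains $\Delta_{\pi_i}$ are recognized as Cantor sets, a homeomorphism $\phi$ is chosen between them, and $h$ is extended equivariantly by $h(\pi_1^n\xi)=\pi_2^n\phi(\xi)$, with continuity at $0$ following from the orbit-wise decay. Your idea for sharpness --- that the intertwining relation forces $h(\pi_1^n\xi)=\pi_2^n h(\xi)$ for every candidate $h$, rigidly fixing the decay rate of $|h|$ along every orbit --- is also exactly the right mechanism.

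However, the sentence ``substituting $|\pi_i|_{p_i}=p_i^{-\ord(\pi_i)}$ and matching exponents yields the stated H\"older bound'' conceals a genuine gap, because carrying out the substitution does not produce $\alpha=\ord(\pi_1)/\ord(\pi_2)$. Along the orbit of a fixed nonzero $\xi$ one has, up to bounded factors, $|h(\pi_1^n\xi)|_{p_2}\asymp p_2^{-n\ord(\pi_2)}$ and $|\pi_1^n\xi|_{p_1}^\alpha\asymp p_1^{-\alpha n\ord(\pi_1)}$, so the ratio $|h(x)|/|x|^\alpha$ is $\bigl(p_1^{\alpha\ord(\pi_1)}/p_2^{\ord(\pi_2)}\bigr)^n$ up to constants. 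Boundedness as $n\to+\infty$ forces
\begin{equation*}
\alpha \;\le\; \frac{\ord(\pi_2)\log p_2}{\ord(\pi_1)\log p_1},
\end{equation*}
and your $h$ attains equality. When $p_1=p_2$ this threshold is $\ord(\pi_2)/\ord(\pi_1)$, the \emph{reciprocal} of the exponent in the statement, and when $p_1\neq p_2$ it genuinely involves $\log p_1$ and $\log p_2$, since the two norms are normalized against different primes. Your sharpness paragraph inherits the same issue: the ratio $|h(\pi_1^n\xi)|/|\pi_1^n\xi|^\alpha$ grows without bound precisely when $\alpha$ exceeds the logarithmic threshold above, not when it exceeds $\ord(\pi_1)/\ord(\pi_2)$. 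A concrete check: with $p_1=p_2=p$, $\ord(\pi_1)=1$, $\ord(\pi_2)=2$, your map satisfies $|h(x)|\asymp|x|^2$, so the exponent $1/2$ from the statement is far from sharp. The paper itself disposes of the exponent as ``straightforward,'' so it offers no guidance here; but the point for you is that the ``matching exponents'' step is not a formality --- written out, it gives a different exponent than the one you claim to derive, and you should reconcile (or flag) that discrepancy rather than assert agreement.
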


Of course the case of expanding multiplier maps can be deduced from this.

Recall the definition of $\Delta_\pi$ from Section \ref{decomposition}.

\begin{proof}
$\Delta_{\pi_i}$ is homeomorphic to Cantor's triadic set, being compact, totally disconnected, and a perfect metric space.  Let $h_0: \Delta_{\pi_1} \isom \Delta_{\pi_2}$ be a homeomorphism.  We may define $h$ by $h(0)=0$, and $h(\pi_1^n u)=\pi_2^n h_0(u)$ for $u \in \Delta_{\pi_1}$ and $n \in \Z$.  The statement about exponents is straightforward.
\end{proof}

Note that homeomorphisms exist between contracting maps defined on different $p$-adic fields.  In the next section we will see that this phenomenon is special to this case.

Let us turn now to the more interesting case of indifferent multiplier maps.  The main point about indifferent multiplier maps is that they have but finitely many $\Zp$-orbits at every radius.  It takes only a little more work to determine the precise number of these orbits, so we include this calculation as well.

\begin{prop} Let $a \in \Zp^{\times}$, not a root of unity.
\begin{enumerate}
\item If $p \neq 2$, then the compact subgroups of $1+ \pp$ are the groups $1+\pp^n$, $n \geq 1$.  For all $p$, the compact subgroups of $1+ \pp^2$ are the groups $1+\pp^n$, $n \geq 2$.

\item If $a \in 1+ \pp$, then the map $n \mapsto a^n$ for $n \in \N$ is uniformly continuous (for the $p$-adic topology on $\N$) and thus extends to a continuous homomorphism $\varphi(z)= a^z$ for $z \in \Zp$.

\item Let $a \in 1+\pp$.  Suppose that either $p \neq 2$ or $a \in 1+\pp^2$.  Then $\{a^z \mid z \in \Zp \}=1+\pp^n$, where $n= \ord(a-1)$.
\item Let $a \in \Zp^\times$ with $p \neq 2$.  Write $a=\zeta a_1$, where $a_1 \in 1+\pp$ and $\zeta$ is a $k$th root of unity, with $p \nmid k$.  Then $\{a^z \mid z \in \Zp \}=(1+\pp^n) \times <\zeta>$, where $n= \ord(a_1-1)$.

\item Suppose that $p=2$ and $a \in (1+\pp)-(1+\pp^2)$.  Then $\{ a^z \mid z \in \Zp \}=(1+\pp^{n+1}) \cup (a+\pp^{n+1})$, where $n=\ord(a+1)$.
\end{enumerate}
\end{prop}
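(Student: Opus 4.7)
My plan is to reduce all five parts to a single valuation identity: whenever $a$ satisfies the hypothesis of (iii) and $k \in \Z \setminus \{0\}$,
\begin{equation*}
\ord(a^k - 1) = \ord(a-1) + \ord_p(k).
\end{equation*}
I would prove this identity first, then deduce (ii) and (iii), next (i), and finally (iv) and (v).

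To establish the identity, I would write $a = 1+x$ with $\ord(x) = n$ and expand
\begin{equation*}
a^p - 1 = px + \binom{p}{2}x^2 + \cdots + x^p.
\end{equation*}
The term $px$ has valuation exactly $n+1$; each $\binom{p}{j}x^j$ for $2 \leq j \leq p-1$ has valuation at least $1+jn$; and $x^p$ has valuation $pn$. Under the hypothesis ($n \geq 1$ for $p$ odd, $n \geq 2$ for $p=2$) all of these strictly exceed $n+1$, so $\ord(a^p-1) = n+1$; induction then gives $\ord(a^{p^j}-1) = n+j$. For general $k = p^s m$ with $p \nmid m$, writing $a^{p^s} = 1+c$ with $\ord(c) = n+s$ and expanding $(1+c)^m$ yields $\ord(a^k-1) = n+s$.

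Part (ii) follows immediately, because $a^{p^k} \to 1$ makes $n \mapsto a^n$ uniformly continuous on $\N$ and so extends by density to $\Zp$. For (iii), the identity shows that the homomorphism $\Z/p^j\Z \to (1+\pp^n)/(1+\pp^{n+j})$ given by $k \mapsto a^k$ has trivial kernel, and since both groups have $p^j$ elements it must be a bijection; hence $\{a^k : k \in \Z\}$ is dense in $1+\pp^n$, and the compact image $\{a^z : z \in \Zp\}$, contained in the closed set $1+\pp^n$, must equal it. For (i), given a nontrivial compact subgroup $H$, I would select $a \in H$ realizing the minimum of $\ord(h-1)$ over $h \in H \setminus \{1\}$; part (iii) yields $1+\pp^n \subseteq H$, and by minimality the reverse inclusion holds.

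For (iv), I would decompose $a = \zeta a_1$ and partition powers by residue modulo the order $d$ of $\zeta$, writing $a^{dj+i} = \zeta^i a_1^i (a_1^d)^j$; since $p \nmid d$ (as $d \mid k$), $a_1^d$ has the same valuation as $a_1$, so (iii) identifies the closure of $\{(a_1^d)^j\}$ as $1+\pp^{n_1}$ with $n_1 = \ord(a_1-1)$, and the union over $i$ gives $\langle \zeta \rangle \times (1+\pp^{n_1})$. For (v), I would factor $a^2 - 1 = 4y(1+y)$ where $a = 1+2y$ and $y \in \Z_2^\times$; since $y$ is odd, $1+y \in \pp$ with $\ord(1+y) = n-1$ for $n = \ord(a+1)$, giving $\ord(a^2-1) = n+1$. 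Then (iii) applied to $a^2 \in 1+\pp^{n+1}$ yields the closure of the even powers as $1+\pp^{n+1}$; multiplying by $a$ gives $a+\pp^{n+1}$ for the odd powers. The main obstacle will be the $p=2$ valuation bookkeeping, especially noticing in (v) that $y$ being a $\Z_2$-unit forces $1+y \in \pp$, producing the characteristic shift by one that distinguishes this case from the $p$-odd situation.
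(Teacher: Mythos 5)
Your proof is correct, but it takes a genuinely different logical route from the paper. The paper omits the proof of (i), cites (ii) from Koblitz, and then deduces (iii) \emph{from} (i) by observing that the image of $\varphi$ is a compact subgroup and invoking the classification of (i). You invert this dependency: you first establish the valuation identity $\ord(a^k-1)=\ord(a-1)+\ord_p(k)$ (a $p$-adic ``lifting the exponent'' lemma, which the paper never makes explicit), use it to prove (ii) and to prove (iii) directly via the bijection $\Z/p^j\Z \isom (1+\pp^n)/(1+\pp^{n+j})$, and only then deduce (i) from (iii) by a minimal-valuation argument. For (iv) you partition the powers by residue class modulo the order of $\zeta$, whereas the paper observes more tersely that $a^k=a_1^k$, invokes (iii) to place $a_1$ in the closure, and then extracts $\zeta=a\cdot a_1^{-1}$ by the group property; both work. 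Part (v) is essentially the same in both (square, apply (iii), shift by $a$ for odd powers), differing only in the parametrization $a=1+2y$ versus the paper's $a=-1+2^n u$. The trade-off is that your argument is self-contained and elementary, reproving facts the paper leaves to references, at the cost of being longer; the paper's is more compact but relies on an omitted proof of (i) and an external citation for (ii). One small point worth tightening: your valuation identity is stated under the hypothesis of (iii), but you use $a^{p^k}\to 1$ for the general $a\in 1+\pp$ of (ii); for $p=2$ and $a\notin 1+\pp^2$ one should note that $a^2\in 1+\pp^2$ already (a one-line binomial expansion), so the identity applies to $a^2$ and still forces $a^{2^k}\to 1$.
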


\begin{proof}
We omit the proof of (i). Item (ii) may be found in \cite{Koblitz}.  For (iii), note that the image of $\varphi$ is a compact subgroup of $1+\pp$, and use the first part.  For (iv), note that
\begin{equation*}
\{a^z \mid z \in \Zp \} \supseteq \{(a_1^k)^z \mid z \in \Zp \}=1+\pp^n \ni a_1.
\end{equation*}
This implies that $\zeta \in  \{a^z \mid z \in \Zp \}$, giving the result.
For (v), write $a=-1+2^nu$ where $u$ is a unit in $\Z_2$.  Note that $\ord(a^2-1)=n+1$, and so by part (iii), $\{ a^{2z} \mid z \in \Zp \}=1+ \pp^{n+1}$.  Multiplying this subgroup by $a$ gives the ``odd'' part.
\end{proof}

\begin{define} Let $a \in  \Zp^\times$, not a root of unity, and write $L_a: \Qp \to \Qp$ for the map $x \mapsto ax$.  Write $N(a)$ for the number of $\Zp$-orbits for $L_a$ in $\Zp^\times$.
\end{define}

\begin{cor}  \label{multorbs}

\begin{enumerate}

\item  Suppose that $p \neq 2$, and let $a=\zeta a_1$, where $a_1 \in 1+ \pp$ and $\zeta$ is a primitive $k$th root of unity, with $p \nmid k$. Then $N(a)=\frac{1}{k}(p-1) p^{n-1}$, where $n=\ord(a_1-1)$.
\item Suppose that $p=2$ and $a \in 1+ \pp^2$.  Then $N(a)=2^{n-1}$, where $n=\ord(a-1)$.
\item Suppose that $p=2$ and $a \in (1+\pp)-(1+\pp^2)$. Then $N(a)=2^{n-1}$, where $n=\ord(a+1)$.
\item \label{giga} In all cases, the number of $\Zp$-orbits for $L_a$ in each circle of constant norm               is exactly $N(a)$.
\end{enumerate}
\end{cor}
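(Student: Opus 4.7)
The plan is to recognize that, because $L_a$ acts on $\Q_p$ by multiplication, the $\Z_p$-orbit of $x$ is simply the coset $x\cdot H$, where $H=\{a^z\mid z\in\Z_p\}$ is the multiplicative subgroup of $\Z_p^\times$ identified by the preceding proposition. Consequently the orbits sitting inside a given circle $C(0,p^{-i})=p^i\Z_p^\times$ are in natural bijection (via multiplication by $p^{-i}$) with the cosets of $H$ in $\Z_p^\times$, so parts (i)--(iii) amount to computing the index $[\Z_p^\times:H]$ in each case, and part (iv) is automatic from this bijection since $|a|=1$ means $L_a$ preserves norms.

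For item (i), I would invoke the standard decomposition $\Z_p^\times\cong\mu_{p-1}\times(1+\pp)$ for $p$ odd. By the previous proposition, $H=\langle\zeta\rangle\times(1+\pp^n)$ with $\langle\zeta\rangle$ of order $k$ sitting inside $\mu_{p-1}$ and $1+\pp^n$ sitting inside $1+\pp$. The index therefore factors as $\frac{p-1}{k}\cdot p^{n-1}$, since $[1+\pp:1+\pp^n]=p^{n-1}$. For item (ii), use $\Z_2^\times\cong\{\pm 1\}\times(1+\pp^2)$ and the fact that $H=1+\pp^n\subseteq 1+\pp^2$ (since $n\geq 2$); then $[\Z_2^\times:H]=2\cdot 2^{n-2}=2^{n-1}$. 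For item (iii), the previous proposition gives $H=(1+\pp^{n+1})\sqcup(a+\pp^{n+1})$, which is a subgroup of $\Z_2^\times$ of index $\tfrac12[\Z_2^\times:1+\pp^{n+1}]=\tfrac12\cdot 2^n=2^{n-1}$.

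Item (iv) then follows immediately: the map $x\mapsto p^{-i}x$ is an $L_a$-equivariant bijection $p^i\Z_p^\times\isom\Z_p^\times$, so it descends to a bijection of orbit sets. I do not foresee a real obstacle; the only delicate point is bookkeeping in case (iii), where one must verify that $(1+\pp^{n+1})\sqcup(a+\pp^{n+1})$ really is a subgroup (equivalently, that $a^2\in 1+\pp^{n+1}$, which is exactly $\ord(a^2-1)=n+1$), so that the index count by cosets is valid.
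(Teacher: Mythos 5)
The paper states Corollary \ref{multorbs} without supplying a proof, so there is nothing to compare against; your argument is the natural one it implicitly relies on, and it is correct. Identifying the $\Zp$-orbit of $x$ with the coset $x\cdot H$ for $H=\{a^z \mid z\in\Zp\}$ reduces $N(a)$ to the index $[\Zp^\times:H]$, and your index computations in each case match the preceding proposition's description of $H$: in (i) via $\Zp^\times\cong\mu_{p-1}\times(1+\pp)$ with $\langle\zeta\rangle$ of order $k$; in (ii) via $\Z_2^\times\cong\{\pm1\}\times(1+\pp^2)$ with $[1+\pp^2:1+\pp^n]=2^{n-2}$; and in (iii) via $[\Z_2^\times:1+\pp^{n+1}]=2^n$ halved because $H$ has two cosets of $1+\pp^{n+1}$. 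Your check that $a^2\in 1+\pp^{n+1}$ in case (iii) (equivalently $\ord(a^2-1)=\ord(a-1)+\ord(a+1)=1+n$) is correct but is already guaranteed since $H$ is defined as the image of the continuous homomorphism $z\mapsto a^z$ and hence a subgroup. Part (iv) follows as you say from the $L_a$-equivariant dilation $x\mapsto p^{-i}x$.
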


For example, let $p=5$ and $a=4$.  Then $4=(-1)(-4)$, so here $k=2$ and $n=1$.  It follows that $N(4)=2$.  On the other hand, $N(2)=1$ and indeed $\orb(L_2)=\Z_5^\times$.

\begin{prop}  If $a,a' \in \Zp^{\times}$ are not roots of unity, then $L_a$ and $L_{a'}$ are homeomorphically equivalent.  They are lipeomorphically equivalent if and only if $N(a)=N(a')$.  More specifically, suppose that $N(a) \leq N(a')$.  Then we may choose $h$ and a constant $C>0$ with the property that
\begin{equation*}
|h(x)| \leq C|x|^{\frac{N(a)}{N(a')}}.
\end{equation*}
This exponent cannot be improved.
\end{prop}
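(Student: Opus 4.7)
The plan is to place this inside the bullseye-space framework of Section \ref{section:s5} and then read off the exponent from Proposition \ref{constants}. The crucial structural input is Corollary \ref{multorbs}(iv): each circle $p^i\Zp^\times$ (for $i \geq 1$) contains exactly $N(a)$ orbits of $L_a$ and exactly $N(a')$ orbits of $L_{a'}$, independent of $i$. First I would verify the hypotheses of Proposition \ref{topconj} for $f_i = L_{a_i}$ on $U_i = \Zp$: $L_a(0)=0$ and $\Zp$ is $L_a$-invariant; $L_a$ preserves norms because $|a|=1$; each $\Zp$-orbit is open because each circle contains only finitely many orbits (all closed, hence complements in a circle of a finite union of closed sets); and $z \mapsto a^zx$ extends Lipschitzly to $\Zp$ by the standard $p$-adic exponential on the $(1+\pp)$-part of $a$ combined with the finite root-of-unity factor.

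With these hypotheses in place, form the bullseye spaces $X=[\Zp]$ for $L_a$ and $Y=[\Zp]$ for $L_{a'}$. By the above, every ring $X_i$ has size $x_0=N(a)$ and every ring $Y_i$ has size $y_0=N(a')$, which is exactly the setting of Proposition \ref{constants} with $n_0=1$. That proposition yields an $\bigl(\tfrac{N(a)}{N(a')},\tfrac{N(a')}{N(a)}\bigr)$-isomorphism $\beta:X \isom Y$, and also shows that no surjective $\alpha$-morphism exists for $\alpha > \tfrac{N(a)}{N(a')}$. Translating through Proposition \ref{betaprop} produces a homeomorphism $h$ conjugating $L_a$ to $L_{a'}$ with $|h(x)| \leq C|x|^{N(a)/N(a')}$, and the non-improvability clause of Proposition \ref{constants} translates directly into optimality of the exponent.

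For the biconditional on lipeomorphic equivalence, the forward direction is immediate: a lipeomorphism $h$ gives $|h(x)| \leq C|x|$ and $|h^{-1}(y)| \leq C|y|$, and the non-improvability forces $\tfrac{N(a)}{N(a')} \geq 1$ and $\tfrac{N(a')}{N(a)} \geq 1$, so $N(a)=N(a')$. Conversely, when $N(a)=N(a')$ the bullseye isomorphism is $(1,1)$ and matches rings level by level; I would build $h$ by pairing orbits at each norm $p^{-i}$ and extending through the relation $h(a^z y_0)=(a')^z h(y_0)$ on each orbit. This gives a norm-preserving $h$ with $|h(x)|=|x|$ on each paired orbit.

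The main obstacle is upgrading the pointwise bound $|h(x)| \leq C|x|$ to a genuine bi-Lipschitz estimate $|h(y)-h(x)| \leq C'|y-x|$. Within a single orbit the calculation is $|h(a^{z_1}y_0)-h(a^{z_2}y_0)| = |h(y_0)|\cdot|(a')^{z_1}-(a')^{z_2}|$ versus $|y_1-y_2|=|y_0|\cdot|a^{z_1}-a^{z_2}|$; after replacing $a$ and $a'$ by suitable powers lying in $1+\pp$ (using the root-of-unity decomposition) this reduces to the exponential estimate $|b^z-1|=|z|\cdot|b-1|$, giving a Lipschitz ratio $|a'-1|/|a-1|$ times a bounded factor. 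Between distinct orbits of the same norm, the orbit-coset structure guarantees a lower bound $|x_1-x_2| \geq c\cdot|x_1|$ for some $c>0$ (depending only on $N(a)$), which combined with the norm-preserving property of $h$ bounds the corresponding ratio uniformly. Pasting these estimates across all orbit levels then produces the required lipeomorphism, and the same argument applied to $h^{-1}$ completes the proof.
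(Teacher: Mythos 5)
Your proposal is correct, and its first two paragraphs match the paper's own (very terse) proof essentially step for step: verify the hypotheses for Proposition \ref{topconj}, invoke Corollary \ref{multorbs}(iv) to see that all rings of the bullseye spaces have constant sizes $N(a)$ and $N(a')$, apply Proposition \ref{constants} to identify the best $\alpha$-morphism, and translate through Proposition \ref{betaprop} to get the pointwise estimate $|h(x)|\leq C|x|^{N(a)/N(a')}$ and its optimality. Your treatment of the ``only if'' direction of the lipeomorphic biconditional (a lipeomorphism forces both exponents to be $\geq 1$, hence $N(a)=N(a')$) is also exactly how the paper reads Proposition \ref{constants}.

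Where your proposal genuinely adds something is the last paragraph. The paper's proof cites only Propositions \ref{betaprop} and \ref{constants}, which deliver the pointwise bound $|h(x)|\leq C|x|$ when $N(a)=N(a')$ -- but not, by themselves, the genuine Lipschitz estimate $|h(x)-h(y)|\leq C'|x-y|$ required by the paper's definition of lipeomorphism. You have correctly identified this gap and sketched how to fill it: on a single orbit one reduces to the exponential estimate $|b^z-1|=|z|\,|b-1|$ (noting that $N(a)=N(a')$ forces the root-of-unity orders $k=k'$ and the depths $n=\ord(a_1-1)=\ord(a'_1-1)$ to agree, so the within-orbit ratio $|a'-1|/|a-1|$ is in fact $1$); between distinct orbits inside one circle the cosets of the compact open subgroup $(1+\pp^n)\langle\zeta\rangle$ are uniformly separated by $p^{-(n-1)}|x|$, and the bound is independent of the circle by dilation invariance; and across circles of different radius the non-archimedean ultrametric together with $|h(x)|=|x|$ makes $h$ automatically an isometry on such pairs. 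The paper does not spell any of this out, so your paragraph four is a useful supplement rather than a detour. The only thing to tighten is the phrase ``replacing $a$ and $a'$ by suitable powers lying in $1+\pp$'': rather than genuinely replacing the maps, observe that $N(a)=N(a')$ forces the decompositions $a=\zeta a_1$ and $a'=\zeta' a'_1$ to have $\zeta,\zeta'$ of the same order $k$, and pair the $k$ ``rotational sheets'' of each orbit so that the exponential estimate applies on each sheet with the same constants.
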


\begin{proof}
This follows from Proposition \ref{betaprop}, with the best $\alpha$ computed using Proposition \ref{constants}.  The number of orbits in each circle is constant by Corollary \ref{multorbs} (iv) above.
\end{proof}

Continuing with the example in $\Q_5$, the best possible H\"{o}lder constant for a homeomorphism between $L_2$ and $L_4$ is $\alpha=\half$.  In particular $L_2$ and $L_4$ are not lipeomorphically equivalent as functions on $\Q_5$.

The following proposition regards the linearization of mappings $f$ which are tangent to the identity.  In fact, no local conjugating map can satisfy the H\"{o}lder estimate at $0$, and in particular $f$ is not lipeomorphically linearizable.

\begin{prop}\label{linearization} Let $f(x)=x+ax^{m+1}+ \cdots$ be a $\Qp$-locally-analytic function with $a \neq 0$, defined on a neighborhood $U$ of $0$. Let $b \in \Zp^\times$, not a root of unity.  Then, there is a neighborhood $U'\subseteq U$ of $0$ and a homeomorphism $h$ conjugating $f(x)$ to $L_b(x)=bx$. However, it is not possible to choose constants $C>0$ and $\alpha >0$ so that, for all $x\in U'$, $|h(x)|\leq C|x|^{\alpha }$. In other words, the homeomorphism $h$ cannot be chosen to be H\"{o}lder continuous in any neighborhood of $0$. \end{prop}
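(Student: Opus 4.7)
The plan is to mirror the proof of Theorem \ref{local Holder continuity} using the bullseye-space framework of Section \ref{section:s5}, replacing Proposition \ref{harder} by Proposition \ref{mixed} to produce the H\"older obstruction. First I would invoke Theorem \ref{lipeomorphic conjugacy} to replace $f$ with the flow $f_{m+1,a}$; this is harmless for H\"older considerations, since the lipeomorphism from Theorem \ref{lipeomorphic conjugacy} is in particular bi-Lipschitz. Both $f_{m+1,a}$ and $L_b$ satisfy the hypotheses preceding Proposition \ref{topconj} in a suitable punctured neighborhood of $0$: they are norm-preserving, their $\Zp$-iteration maps extend continuously (by Proposition \ref{phis} for the flow, and because $n\mapsto b^{n}$ extends continuously from $\N$ to $\Zp$ for $b$ indifferent), and each $\Zp$-orbit is open, since on every circle of constant radius there are only finitely many $\Zp$-orbits of each function (by Proposition \ref{counting orbits} for the flow and by Corollary \ref{multorbs}(iv) for $L_b$). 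Proposition \ref{topconj} then furnishes the conjugating homeomorphism $h$, proving the existence claim.

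For the H\"older impossibility, I would contrast the two bullseye spaces. By Proposition \ref{counting orbits}, the bullseye space for $f_{m+1,a}$ has rings of exponentially growing size $x_0 p^{mn}$; by Corollary \ref{multorbs}(iv), the bullseye space for $L_b$ has rings of the constant size $N(b)$. Proposition \ref{mixed}, applied with $x(n) = N(b)$, $y(n) = x_0 p^{mn}$, and $r = p^{m}$, then asserts that no surjective $\alpha$-morphism from the constant-ring bullseye space to the exponential one exists, for any $\alpha > 0$. By Proposition \ref{betaprop}, a H\"older estimate of the form $|h(x)| \leq C|x|^{\alpha}$ on the source of $h$ is equivalent to the associated $\beta(h)$ being an $\alpha$-morphism. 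Since the conjugation equation $h^{-1}\circ f\circ h = L_b$ places the source of $h$ on the $L_b$ side (constant rings) and the target on the $f$ side (exponential rings), any such estimate would produce a forbidden $\alpha$-morphism; hence no $\alpha > 0$ can work, which also rules out lipeomorphic linearizability of $f$.

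The main obstacle is really just the bookkeeping: one must correctly orient $\beta(h)$ so that it runs from the constant-ring bullseye space to the exponential-ring one, which is precisely the direction Proposition \ref{mixed} forbids. With this alignment the argument is a direct application of the machinery already in hand, and it closely parallels the sharpness portion of the proof of Theorem \ref{local Holder continuity}.
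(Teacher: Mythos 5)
Your proposal is correct and follows essentially the same route as the paper: reduce to $f_{m+1,a}$ via Theorem \ref{lipeomorphic conjugacy}, get existence from Proposition \ref{topconj}, count orbits via Proposition \ref{counting orbits} and Corollary \ref{multorbs}(iv), and derive the H\"older impossibility from Propositions \ref{mixed} and \ref{betaprop}. You also correctly handle the one subtle point — orienting $\beta(h)$ so that it runs from the constant-ring ($L_b$) bullseye space to the exponential-ring ($f$) one, which is precisely the direction Proposition \ref{mixed} forbids.
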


\begin{proof}
By Theorem \ref{lipeomorphic conjugacy}, we may assume that $f=f_{m+1,a}$ for some $m \geq 1$ and $a \in \Zp - \{0 \}$.
By Proposition \ref{counting orbits}, the number of $\Zp$-orbits for $f$ in $C(0,p^i)$ is of the form $y_0p^{im}$ for some constant $y_0$.  As is seen in the discussion above, the number of $\Zp$-orbits for $L_b$ in $C(0,p^i)$ is a finite number which does not depend on $i$.  By Proposition \ref{topconj}, there is a homeomorphism conjugating $f$ to $L_b$, but by Propositions \ref{mixed} and \ref{betaprop}, it cannot satisfy any H\"{o}lder estimate at $0$.
\end{proof}

\section{Homeomorphic Equivalence Amongst Different Fields}\label{section:s6}

In this section, we settle the following question.  We have seen in Proposition \ref{utility} that it is possible for contracting multiplier maps on two different $p$-adic fields to be homeomorphically equivalent.  Can this happen for analytic maps tangent to the identity on two different $p$-adic fields?  At first glance it may seem plausible, since $\Qp$ and $\Q_q$ are homeomorphic for $p \neq q$.  However we will see this possibility is precluded by the different manners in which their iterates accumulate.

\begin{lemma} \label{Euclid} Let $p \neq q$ be prime numbers, and $(X,d)$ a metric space.  Suppose that for a bijection $f: X \to X$,
\begin{equation*}
\lim_{n \to \infty} f^{\circ kp^n}(x)=x \text{ and } \lim_{n \to \infty} f^{\circ kq^n}(x)=x
\end{equation*}
uniformly for all $k \in \Z$ and $x \in X$.  Then $f(x)=x$.
\end{lemma}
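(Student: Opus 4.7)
The plan is a direct Bezout argument: since $p$ and $q$ are coprime, any sufficiently large ``Bezout combination'' of $p^n$ and $q^n$ equals $1$, which lets us rewrite a single application of $f$ as a product of two iterates whose lengths we can control.

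Fix $\varepsilon > 0$. By the two hypothesized uniform limits, there exists $N \in \N$ such that for every $n \ge N$, every $k \in \Z$, and every $x \in X$ we have
\begin{equation*}
d\bigl(f^{\circ k p^n}(x), x\bigr) < \varepsilon
\qquad \text{and} \qquad
d\bigl(f^{\circ k q^n}(x), x\bigr) < \varepsilon.
\end{equation*}
Since $\gcd(p^N, q^N) = 1$, Bezout's identity produces integers $a, b \in \Z$ with $a p^N + b q^N = 1$. Because $f$ is a bijection, negative iterates make sense, and the identity above gives the decomposition
\begin{equation*}
f(x) = f^{\circ (a p^N + b q^N)}(x) = f^{\circ a p^N}\bigl(f^{\circ b q^N}(x)\bigr).
\end{equation*}

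Now set $y = f^{\circ b q^N}(x)$. The second uniform estimate (applied with $k = b$) yields $d(y, x) < \varepsilon$. The first uniform estimate (applied with $k = a$ and base point $y$, using uniformity in $x$) gives $d(f^{\circ a p^N}(y), y) < \varepsilon$. Combining these by the triangle inequality,
\begin{equation*}
d(f(x), x) \le d\bigl(f^{\circ a p^N}(y), y\bigr) + d(y, x) < 2\varepsilon.
\end{equation*}
Since $\varepsilon > 0$ was arbitrary and $x \in X$ was arbitrary, we conclude $f(x) = x$ on $X$.

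There is essentially no obstacle here beyond spotting the Bezout trick; the only point requiring any care is that uniformity in both $k$ and $x$ is genuinely needed: we must evaluate the ``$p^n$-iterate'' bound at the moved point $y$ and at the Bezout coefficient $a$, which may be large and of either sign, so that a pointwise or one-sided convergence statement would not suffice. The bijectivity of $f$ is used to make sense of negative iterates in the decomposition.
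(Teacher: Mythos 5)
Your proof is correct and follows essentially the same Bezout/triangle-inequality argument as the paper: pick $n$ large, write $ap^n + bq^n = 1$, decompose $f = f^{\circ ap^n} \circ f^{\circ bq^n}$, and apply the two uniform estimates at $x$ and at the intermediate point. The added commentary about why uniformity in both $k$ and $x$ is needed is accurate and matches the role these hypotheses play in the paper's version.
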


\begin{proof} Let $\eps >0$.  Pick $n$ large enough so that
\begin{equation*}
d(f^{\circ kp^n}(x),x) < \eps \text{ and } d(f^{\circ kq^n}(x),x) < \eps.
\end{equation*}
Since $p^n$ and $q^n$ are relatively prime, we may choose $a,b \in \Z$ so that $ap^n+bq^n=1$.  Then for any $x \in X$ we have
\begin{equation*}
\begin{split}
d(f(x),x) &=d(f^{\circ (ap^n+bq^n)}(x),x) \\
         &=d(f^{\circ ap^n}(f^{\circ bq^n}(x)),x) \\
         &\leq d(f^{\circ ap^n}(f^{\circ bq^n}(x)),f^{\circ bq^n}(x))+d(f^{\circ bq^n}(x),x) \\
         &<2 \eps. \\
\end{split}
\end{equation*}
This being true for all $\eps>0$, we conclude that $f(x)=x$.
\end{proof}

\begin{cor} \label{redact} Let $p_1 \neq p_2$ be prime numbers.  For $i=1,2$, let $K_i$ be $p_i$-adic fields, $m_i \geq 2$, and $a_i \in K_i$.  Then the flows $f_i=f_{m_i,a_i}: K_i \to K_i$ are not locally homeomorphic.  That is, there is no local homeomorphism $h: U_1 \to U_2$ so that $h \circ f_1=f_2 \circ h$, where $U_i$ are $f_i$-invariant neighborhoods of $0$ in $K_i$.
\end{cor}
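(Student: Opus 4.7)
The plan is to force a contradiction via Lemma \ref{Euclid}, which captures precisely the obstruction: no non-trivial local map can have its integer iterates simultaneously contracting to the identity along the sequences $p_1^n$ and $p_2^n$. Suppose for contradiction that a local homeomorphism $h: U_1 \to U_2$ with $h \circ f_1 = f_2 \circ h$ exists. Shrinking if necessary, I may assume $U_1$ is a closed disk about $0$ on which $f_1$ acts isometrically. Since $f_i(x) - x = a_i x^{m_i+1} + \cdots$ has no nonzero fixed points near $0$, the intertwining forces $h(0)=0$, and I set $V_2 := h(U_1) \subseteq U_2$.

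Next I would establish the $p_2$-adic uniform convergence, which is direct. By Lemma \ref{lateruse} applied to $f_2$,
\[ |f_2^{\circ k p_2^n}(y) - y| \;=\; |a_2 \cdot k p_2^n \cdot y^{m_2+1}| \;\leq\; |a_2|\, p_2^{-n}\, |y|^{m_2+1} \]
for every integer $k$ and every $y \in V_2$, so $f_2^{\circ k p_2^n}(y) \to y$ as $n \to \infty$, uniformly in $k \in \Z$ and $y \in V_2$.

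The $p_1$-adic convergence must be transported across $h$. The same Lemma \ref{lateruse}, applied this time to $f_1$, gives $f_1^{\circ k p_1^n} \to \mathrm{id}$ uniformly on $U_1$ in $k \in \Z$. Combining this with the intertwining relation $f_2^{\circ N} \circ h = h \circ f_1^{\circ N}$ at $N = k p_1^n$, and with uniform continuity of $h$ on the compact closed disk $U_1$, I deduce $f_2^{\circ k p_1^n}(y) \to y$ uniformly in $k$ and $y \in V_2$.

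With both uniform convergences in hand, Lemma \ref{Euclid} applied to $f_2|_{V_2}$ yields $f_2|_{V_2} = \mathrm{id}_{V_2}$, which contradicts the expansion $f_2(y) = y + a_2 y^{m_2+1} + \cdots$ with $a_2 \neq 0$ on any neighborhood of $0$. The main obstacle in this plan is securing the uniform continuity of $h$ used in the third step, and in particular arranging that $U_1$ be compact: this is automatic when $K_1$ is locally compact (e.g.\ a finite extension of $\Qp$), but for general $p$-adic fields such as $\Cp$ one must exploit the explicit flow structure of $f_1$ and the formula \eqref{express} more carefully to extract a genuinely uniform estimate from pointwise continuity.
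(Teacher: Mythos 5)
Your proposal follows exactly the same route as the paper: apply Lemma~\ref{Euclid} to $f_2$, taking the uniform convergence $f_2^{\circ kp_2^n}\to \mathrm{id}$ directly from Lemma~\ref{lateruse}, and transporting the analogous $p_1$-adic uniform convergence of $f_1$-iterates across the conjugacy $h$. You have correctly isolated the one genuine subtlety, which the paper's own proof passes over in silence (it only exhibits pointwise convergence $\lim_n f_2^{\circ kp_1^n}(x)=x$, whereas Lemma~\ref{Euclid} demands uniformity): to transport uniform convergence through $h$ one needs $h$ to be uniformly continuous on an $f_1$-invariant set. Your intended appeal to compactness of a closed disk does fail for $K_1=\C_{p_1}$, and the remark about ``exploiting the flow structure and \eqref{express}'' is too vague to close the gap.

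The clean fix is to test Lemma~\ref{Euclid} on a \emph{single} compact orbit rather than on a neighborhood. Fix any $x_0\in U_1\setminus\{0\}$ and let $\orb_1=\orb(x_0)$. By Proposition~\ref{phis}, $\orb_1$ is a lipeomorphic image of $\Z_{p_1}$, hence compact, so $h|_{\orb_1}$ is uniformly continuous. Moreover $f_1(\orb_1)=\orb_1$, so the intertwining relation forces $f_2\bigl(h(\orb_1)\bigr)=h(\orb_1)$, making $f_2$ a self-bijection of the compact set $X_0:=h(\orb_1)\subseteq U_2$. On $X_0$ both uniform limits $f_2^{\circ kp_2^n}\to\mathrm{id}$ (from Lemma~\ref{lateruse} on $U_2$) and $f_2^{\circ kp_1^n}\to\mathrm{id}$ (transported through the uniformly continuous $h|_{\orb_1}$) now hold, and Lemma~\ref{Euclid} gives $f_2|_{X_0}=\mathrm{id}$. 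Since $h$ is injective and $\orb_1$ is not a singleton, $X_0$ contains at least two distinct points, yet $f_2(y)=y+a_2y^{m_2+1}+\cdots$ has no fixed point other than $0$ in a small neighborhood; this is the desired contradiction. With this modification your argument is complete and applies to all $p_i$-adic fields $K_i$, including $\C_{p_i}$.
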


\begin{proof}  Note that
\begin{equation*}
\lim_{n \to \infty} f_i^{\circ kp_i^n}(x)=x
\end{equation*}
uniformly for all $k$ and $x$ by Proposition \ref{phis}.
Suppose there were such a homeomorphism.  Let $k \in \Z$.  For all $x \in U_2$ we have
\begin{equation*}
\begin{split}
\lim_{n \to \infty} f_2^{\circ kp_1^n}(x) &= \lim_{n \to \infty} h \circ f_1^{\circ kp_1^n} \circ h^{-1}(x) \\
                                          &= x. \\
\end{split}
\end{equation*}
By Lemma \ref{Euclid}, it follows that $f_2(x)=x$, a contradiction.
\end{proof}

\noindent {\bf Remark:}  It similarly follows from Lemma \ref{Euclid} that indifferent (and nontrivial) multiplier maps $L_{a_i}$ on different $p_i$-adic fields are not locally homeomorphic.

\begin{prop}
 Let $p_1 \neq p_2$ be prime numbers.  For $i=1,2$, let $K_i$ be $p_i$-adic fields, and $f_i(x)=x+O(x^2)$, with $f_i(x) \neq x$.   Then the $f_i$ are not locally homeomorphic.  That is, there is no local homeomorphism $h: U_1 \to U_2$ so that $h \circ f_1=f_2 \circ h$, where $U_i$ are $f_i$-invariant neighborhoods of $0$ in $K_i$.
\end{prop}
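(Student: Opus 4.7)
The plan is to reduce this proposition to Corollary \ref{redact}, which already handles the case of the flows $f_{m+1,a}$. Since $f_i(x) = x + O(x^2)$ and $f_i(x) \neq x$, each $f_i$ has a well-defined order: there exist integers $m_i \geq 1$ and nonzero constants $a_i \in K_i$ such that
\begin{equation*}
f_i(x) = x + a_i x^{m_i+1} + \cdots.
\end{equation*}
By Theorem \ref{lipeomorphic conjugacy}, each $f_i$ is lipeomorphically equivalent to the flow $f_{m_i+1, a_i}$ on some neighborhood $V_i$ of $0 \in K_i$; let $g_i : V_i \isom V_i$ denote a conjugating lipeomorphism, so that $g_i^{-1} \circ f_i \circ g_i = f_{m_i+1, a_i}$. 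In particular $g_i$ is a homeomorphism.

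Now suppose for contradiction that there is a local homeomorphism $h : U_1 \to U_2$ with $h \circ f_1 = f_2 \circ h$ on $f_i$-invariant neighborhoods $U_i$ of $0$. Since each $f_{m_i+1,a_i}$ is norm-preserving near $0$, every sufficiently small ball $B_i \subseteq V_i$ centered at $0$ is invariant under $f_{m_i+1,a_i}$. Shrinking $B_1$ further if necessary, we may assume $g_1(B_1) \subseteq U_1$ and $h(g_1(B_1)) \subseteq g_2(B_2)$. Then the composition
\begin{equation*}
H = g_2^{-1} \circ h \circ g_1 : B_1 \to B_2
\end{equation*}
is a local homeomorphism between $f_{m_1+1,a_1}$-invariant and $f_{m_2+1,a_2}$-invariant neighborhoods of $0$, and a routine substitution shows that $H \circ f_{m_1+1,a_1} = f_{m_2+1,a_2} \circ H$. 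This directly contradicts Corollary \ref{redact}, completing the proof.

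The substantive content is entirely carried by the two cited results: Theorem \ref{lipeomorphic conjugacy} replaces the abstract analytic maps $f_i$ by their flow models, and Corollary \ref{redact} prevents homeomorphic equivalence between flows in different characteristics by exploiting the incompatible $p_1$-adic and $p_2$-adic accumulation patterns of iterates. Thus there is no genuine obstacle; the only mildly delicate point is bookkeeping the domains, which is handled by replacing $V_i$ with small enough balls so that all three maps $g_1$, $h$, and $g_2^{-1}$ can be composed and so that $f_{m_i+1,a_i}$-invariance is preserved.
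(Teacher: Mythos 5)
Your proof is correct and takes the same route as the paper: the paper's proof of this proposition is just the one-line appeal to Corollary \ref{redact}, leaving implicit exactly the reduction via Theorem \ref{lipeomorphic conjugacy} that you spell out. Your domain bookkeeping is fine; one could streamline it by taking $B_2 := H(B_1)$, which is automatically an open $f_{m_2+1,a_2}$-invariant neighborhood of $0$ since $H$ is an open map that intertwines the two flows.
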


\begin{proof}
This follows from Corollary \ref{redact}.
\end{proof}

\section{Appendix: Lipschitz Maps under change of variable} \label{appendix}
We show here that a Lipschitz condition in a neighborhood of $\infty $ may be transported to a neighborhood of $0$, under suitable hypotheses. This result is needed for the proof of Theorem \ref{curbed}. In the case that the field $K={\mathbf C}$, this fact is cited in the paper of Shcherbakov \cite{Shch}. We have provided this proof solely for the reader's convenience, claiming no originality. Although the statement given in Section \ref{section:s4} is for non-archimedean fields, the method of the proof below also adapts to the complex case.

\begin{prop} \label{Americanproof}\label{Lipschitz at 0}
Let $K$ be a $p$-adic field, and $\pi \in \pp$ a nonzero element.  Let $m,N \geq 1$.

Let $Y=\A_{m+1}(\pi^N \Delta')$ and $\tilde h: Y \to Y$ a homeomorphism.  Let $\hat{h}(\eta)=\tilde h(\eta)-\eta$.  Suppose that for any $\eps'>0$, $\delta'>0$, there is an $m$-neighborhood $Y' \subset Y$ so that
\begin{enumerate}
\item $\tilde h$ restricts to a homeomorphism on $Y'$.
\item For all $\eta \in Y'$, we have $|\hat{h}(\eta)| < \eps'$.
\item For all $\eta_1,\eta_2 \in Y'$, we have $|\hat{h}(\eta_2)-\hat{h}(\eta_1)| \leq \delta'|\eta_2-\eta_1|$.
\end{enumerate}

Let $\zeta \in \mu_m(K)$, and $X_\zeta=\{ \pi^{N}, \pi^{N+1},\ldots \} \cdot U_\zeta$.  Let
\begin{equation*}
h_\zeta= (\A_{m+1}^{-1})_\zeta \circ \tilde h \circ \A_{m+1} : X_\zeta \to X_\zeta,
\end{equation*}
and $\ol{h}_\zeta(x)=h_\zeta(x)-x$.

Then for all $\epsilon >0 $ and $\delta >0$, there is an $ N' \geq N$ so that
\begin{enumerate}
\item $h_\zeta$ restricts to a homeomorphism on $X_\zeta^\prime= \{ \pi^{N'}, \pi^{N'+1},\ldots\} \cdot U_\zeta$.
\item For all $x \in X_\zeta^\prime$, we have $|\ol{h}_\zeta(x)|\leq |x|^{m+1-\epsilon }$.
\item For all $x,y \in X_\zeta^\prime$, we have $|\ol{h}_\zeta(x)-\ol h_\zeta(y)|\leq \delta|x-y|$.
\end{enumerate}
\end{prop}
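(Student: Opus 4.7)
The plan is to compute $h_\zeta$ explicitly in terms of $\hat h$ through the intertwining $\A_{m+1}$, and then read off both estimates from a single power-series identity. Set $\eta = \A_{m+1}(x) = -1/(mx^m)$. Applying $\A_{m+1}$ to both sides of $h_\zeta = (\A_{m+1}^{-1})_\zeta \circ \tilde h \circ \A_{m+1}$ gives the key relation
\begin{equation*}
\frac{1}{h_\zeta(x)^m} - \frac{1}{x^m} = -m\,\hat h(\eta),
\end{equation*}
equivalently $h_\zeta(x)^m = x^m/(1+s(\eta))$ where $s(\eta) := \hat h(\eta)/\eta$. Once $\hat h$ is sufficiently small, Hensel's lemma (Proposition \ref{Hensel}) selects an $m$-th root near $1$; the branch built into $(\A_{m+1}^{-1})_\zeta$ is pinned down by the requirement $h_\zeta(x) \in X_\zeta$, which follows once the correction lies in $m\pp$ by Proposition \ref{decomp2}. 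So $\ol{h}_\zeta(x) = x\,\phi(s(\eta))$ where $\phi(u) = (1+u)^{-1/m} - 1$, and Corollary \ref{mI} yields $|\phi(u)| = |u|/|m|$ in the relevant range.

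For the size estimate this immediately gives
\begin{equation*}
|\ol{h}_\zeta(x)| = |x| \cdot \frac{|\hat h(\eta)|}{|m|\,|\eta|} = |\hat h(\eta)|\,|x|^{m+1},
\end{equation*}
so the hypothesis $|\hat h(\eta)| < \eps'$ with any $\eps' < 1$ already yields $|\ol{h}_\zeta(x)| \leq |x|^{m+1} \leq |x|^{m+1-\epsilon}$ on any $X_\zeta'$ with $N'$ large enough. For the Lipschitz estimate I use the decomposition
\begin{equation*}
\ol{h}_\zeta(x) - \ol{h}_\zeta(y) = x\bigl[\phi(s(\eta_x)) - \phi(s(\eta_y))\bigr] + (x-y)\,\phi(s(\eta_y)).
\end{equation*}
When $|x| \neq |y|$ the ultrametric inequality and the size bound already give $|\ol{h}_\zeta(x) - \ol{h}_\zeta(y)| \leq \eps'\max(|x|,|y|)^{m+1} \leq \delta|x-y|$ after shrinking $X_\zeta'$. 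When $|x| = |y|$ (and $|x-y|$ is small compared with $|x|$), the factorization of $x^m-y^m$ combined with Corollary \ref{mI} gives
\begin{equation*}
|\eta_x - \eta_y| = \frac{|x-y|}{|x|^{m+1}};
\end{equation*}
combined with the Lipschitz hypothesis $|\hat h(\eta_x) - \hat h(\eta_y)| \leq \delta'|\eta_x - \eta_y|$ this yields $|s(\eta_x) - s(\eta_y)| \leq \delta'|m|\,|x-y|/|x|$, and a Lipschitz version of Corollary \ref{mI} applied to $\phi$ then turns the first summand into at most $\delta'|x-y|$, while the second summand is bounded by $\eps'|y|^m\,|x-y|$, which is absorbed.

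The main obstacle I expect is purely bookkeeping: to make each of the identities $|\phi(u)| = |u|/|m|$, $|x^m - y^m| = |m|\,|x|^{m-1}|x-y|$, and the branch selection for $(\A_{m+1}^{-1})_\zeta$ simultaneously valid, I must choose $N'$ large enough (depending on $\epsilon$, $\delta$, and $|m|$) and then propagate $\eps'$ and $\delta'$ from the hypothesis on $\tilde h$ with some room to spare. This is where the prime $p$ dividing $m$ would hurt, since then $|m|$ is genuinely small and the requisite shrinking is more severe, but it remains finite. Once this uniform smallness threshold is fixed, the three displayed steps above combine directly to produce the proposition with the advertised constants.
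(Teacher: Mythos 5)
Your proof and the paper's both compute $h_\zeta$ explicitly via the intertwining relation, arriving at the same master formula $\ol{h}_\zeta(x) = x\,\phi\!\left(mx^m\hat h(\eta)\right)$ (the paper writes $F(u)=(1-u)^{-1/m}-1$, you write $\phi(u)=(1+u)^{-1/m}-1$; same thing up to sign normalization), and both read off the size estimate from this directly. Where the two proofs genuinely diverge is the Lipschitz estimate. The paper never directly estimates $|\eta_x-\eta_y|$. Instead it telescopes the difference $mx^m\hat h(-\tfrac{1}{mx^m}) - my^m\hat h(-\tfrac{1}{my^m})$ by inserting $m$ intermediate terms of the form $\hat h(-\tfrac{1}{mx^jy^{m-j}})$, and at each step uses the \emph{unconditional} identity $\left|\tfrac{1}{mx^jy^{m-j}} - \tfrac{1}{mx^{j-1}y^{m-j+1}}\right| = \tfrac{|x-y|}{|mx^jy^{m-j+1}|}$, which holds with no hypothesis on the relative sizes of $|x-y|$, $|x|$, $|y|$. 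This buys you a case-free argument at the cost of $m$ bookkeeping steps. Your argument skips the telescope and estimates $|\eta_x-\eta_y|$ in one go, which forces a case split, and the split as stated has a gap: you write ``$|x-y|$ small compared with $|x|$,'' but the factorization $|x^m-y^m| = |m|\,|x|^{m-1}|x-y|$ requires $|x-y|<|mx|$, not $|x-y|<|x|$ --- a genuine difference whenever $p\mid m$. That leaves an uncovered band $|mx|\leq|x-y|<|x|$ (with $|x|=|y|$); it is easily patched by the crude bound $|\ol{h}_\zeta(x)-\ol{h}_\zeta(y)|\leq\eps'|x|^{m+1} \leq (\eps'/|m|)|x|^m\,|x-y|$, but you should say so explicitly. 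Also note that the clean identity $|\eta_x-\eta_y| = |x-y|/|x|^{m+1}$ is exact only when $|x| = |\pi|^n$ with $u\in U_\zeta$ a unit (e.g.\ $K=\Qp$); for general $K$ a bounded factor $|u|^{-(m-1)}$ intervenes, harmless but worth flagging since the Proposition is stated over any $p$-adic field.
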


\begin{proof}
Note that $h_\zeta(x)=\zeta \cdot h_1(x)$, so we may as well assume that $\zeta=1$.  Let $h=h_1: X_1 \to X_1$ and $\ol h=\ol h_1$.

The map $h$ can be written as
\begin{equation*}
h(x)=\frac{x}{\root{m}\of{1-mx^{m}\hat{h}\left( -\frac{1}{mx^{m}}\right)}},
\end{equation*}
where the $m$th root in the denominator is as in Proposition \ref{Hensel}.  Note that for $x \in X_1$, the right hand side is in $X_1 \cdot (1+ m\pp)$.  By the construction of $U_1$, we have $(1+m \pp)U_1=U_1$ and therefore the right hand side is again in $X_1$.  Thus we have chosen the correct $m$th root.

We write the function $(1-u)^{-\frac{1}{m}}=1+F(u)$, where $F$ is an analytic function defined on the ball $B(0,R)$ for sufficiently small $R$. Moreover, $F$ satisfies the following estimates:
\begin{enumerate}
\item
$|F(u)|=O(u)$, and in particular, $|F(u)|\leq C_{R}$, where $C_{R}\rightarrow 0$ as $R\rightarrow 0$;
\item
$|F'(u)|<2$ if $R$ is sufficiently small.
\end{enumerate}
In particular, $F$ can be made as small as desired if $R$ is shrunk, and is a Lipschitz function with constant no greater than $2$.

The function $h$ can now be written
\begin{equation*}
h(x)=x+xF\left( mx^{m}\hat{h}\left( -\frac{1}{mx^{m}}\right) \right) ,
\end{equation*}
and so
\begin{equation*}
\ol h(x)= xF\left( mx^{m}\hat{h}\left( -\frac{1}{mx^{m}}\right) \right) .
\end{equation*}

 In view of this equation, given $\eps >0$, we may shrink $R$ so that $|\ol h(x)|\leq |x|^{m+1-\epsilon }$. We show now that $\ol h$ satisfies a Lipschitz condition.
\begin{equation*}\begin{split}
|\ol h(x)-\ol h(y)| &=\left| xF\left( mx^{m}\hat{h}\left( -\frac{1}{mx^{m}}\right) \right) -yF\left( my^{m}\hat{h}\left( -\frac{1}{my^{m}}\right) \right) \right| \\
&\leq \left| F\left( mx^{m}\hat{h}\left( -\frac{1}{mx^{m}}\right) \right) \right| |x-y|\\
&+|y|\left| F\left( mx^{m}\hat{h}\left( -\frac{1}{mx^{m}}\right) \right) -F\left( my^{m}\hat{h}\left( -\frac{1}{my^{m}}\right) \right) \right|\\
& \leq C_{R}|x-y|+2|y|\left| mx^{m}\hat{h}\left( -\frac{1}{mx^{m}}\right) -my^{m}\hat{h}\left( -\frac{1}{my^{m}}\right) \right|.
\end{split}
\end{equation*}

At this point, we will assume that $m=2$. The method for the general case should be clear. We will focus on the term
\begin{equation*}\begin{split}
&2|y|\left| 2x^{2}\hat{h}\left( -\frac{1}{2x^{2}}\right) -2y^{2}\hat{h}\left( -\frac{1}{2y^{2}}\right) \right| \\
&= 2|2y|\left|x^{2}\hat{h}\left( -\frac{1}{2x^{2}}\right) -x^{2}\hat{h}\left( -\frac{1}{2xy}\right)+x^{2}\hat{h}\left( -\frac{1}{2xy}\right)-y^{2}\hat{h}\left( -\frac{1}{2y^{2}}\right) \right| \\
&\leq 2|2x^{2}y|\left| \hat{h}\left( -\frac{1}{2x^{2}}\right)-\hat{h} \left( -\frac{1}{2xy}\right) \right| +2|2y|\left| x^{2}\hat{h}\left( -\frac{1}{2xy}\right) -y^{2}\hat{h}\left( -\frac{1}{2y^{2}}\right)\right| \\
&\leq 2\delta '|x-y|+2|2y|\left| x^{2}\hat{h}\left( -\frac{1}{2xy}\right) -y^{2}\hat{h}\left( -\frac{1}{2y^{2}}\right)\right|
\end{split}
\end{equation*}

This reduces us to estimating the term
\begin{equation*}
\begin{split}
&2|2y|\left| x^{2}\hat{h}\left( -\frac{1}{2xy}\right) -y^{2}\hat{h}\left( -\frac{1}{2y^{2}}\right)\right| \\
&= 2|2y|\left| x^{2}\hat{h}\left( -\frac{1}{2xy}\right) -xy\hat{h}\left( -\frac{1}{2xy}\right) +xy\hat{h}\left( -\frac{1}{2xy}\right) -y^{2}\hat{h}\left( -\frac{1}{2y^{2}}\right)\right| \\
&\leq 2|2xy|\left| \hat{h}\left( -\frac{1}{2xy}\right) \right||x-y| +2|2y^{2}|\left|x \hat{h}\left( -\frac{1}{2xy}\right) -y\hat{h}\left( -\frac{1}{2y^{2}}\right) \right| \\
&\leq D^{1}_{R}|x-y|+2|2y^{2}|\left| x\hat{h}\left( -\frac{1}{2xy}\right) -y\hat{h}\left( -\frac{1}{2y^{2}}\right) \right|.\\
\end{split}
\end{equation*}
In the last inequality, $D^{1}_{R}$ is a constant satisfying $D^{1}_{R}\rightarrow 0$ as $R\rightarrow 0$. Here, we have used the fact that $|\hat{h}|\leq \epsilon '$.

Repeating the technique above, we obtain
\begin{equation*}
\begin{split}
&2|2y^{2}|\left| x\hat{h}\left( -\frac{1}{2xy}\right) -y\hat{h}\left( -\frac{1}{y^{2}}\right) \right| \\
&\leq 2|2xy^{2}|\left| \hat{h}\left( -\frac{1}{2xy}\right) - \hat{h}\left( -\frac{1}{2y^{2}}\right) \right| +2|2y^{2}|\left| x\hat{h}\left( -\frac{1}{2y^{2}} \right) -y\hat{h}\left( -\frac{1}{2y^{2}}\right) \right| \\
&\leq 2\delta '|x-y|+ D^{2}_{R}|x-y|,
\end{split}
\end{equation*}
where again, $D^{2}_{R}\rightarrow 0$ as $R\rightarrow 0$. Thus, we have shown that $\hat h$ has a Lipschitz constant no greater than $C_{R}+4\delta '+D^{1}_{R}+D^{2}_{R}$ (and for general $m$, it is now apparent that the Lipschitz constant will be no greater than $C_{R}+2m\delta '+\sum_{i=1}^{m}D_{R}^{i}$). Now, given $0< \delta < 1$, we choose an $m$-neighborhood $Y$ of $\infty$ so that this sum is less than $\delta $.

\end{proof}

\bibliographystyle{plain}

\end{document}